\documentclass[11pt]{article}
\usepackage{graphicx} % Required for inserting images

\usepackage{fullpage}

\usepackage{mathUtil}

\usepackage{todonotes}

\usepackage[backend=biber,style=alphabetic,maxnames=50]{biblatex}
\addbibresource{references_compactification.bib}

\usepackage{authblk}

\usepackage{hyperref,comment}
\hypersetup{
    colorlinks=true,
    linkcolor=blue,
    filecolor=magenta,
    urlcolor=cyan,
    pdftitle={Locally Esakia Spaces and Esakia order-compactifications},
    pdfpagemode=FullScreen,
}

\title{Esakia order-compactifications and locally Esakia spaces}

\author{Rodrigo Nicolau Almeida, Guram Bezhanishvili, and Nick Bezhanishvili}

\begin{document}
\date{}
\maketitle

\begin{abstract}
We introduce 
%Heyting and 
Esakia order-compactifications 
%of ordered topological spaces, 
and study how they fit in the general theory of %Nachbin and 
Priestley order-compactifications. We provide an analog of Dwinger's theorem by  %connecting 
characterizing Esakia order-compactifications by means of
%with 
special rings of upsets. These considerations naturally lead to 
%also introduce 
the notion of a locally Esakia space, for which we prove that taking the largest Esakia order-compacification is functorial, thus obtaining an analog of Banaschewski's theorem.
%, and show that taking the largest
%greatest 
%Esakia order-compacification is functorial \color{red} doesn't this repeat the same idea, given that Banaschewski's result concerns the largest Esakia order-compactification\color{black}.   
\end{abstract}

\tableofcontents

\vspace{5mm}

\noindent \textbf{MSC 2020}: 54F05; 06F30; 54D35; 06E15; 06D50; 06D20

\vspace{1mm}

\noindent \textbf{Keywords}: Ordered space; Priestley space; Esakia space; order-compactification; order-zero-dimensionality

\section{Introduction}

The theory of compactifications is one of the mainstream areas of
%plays an important 
%a key 
%role in 
point-set topology. It is a classic result 
%of Tychonoff 
that a topological space $X$ admits a compactification iff it is completely regular, and that the Stone-\v{C}ech compactification $\beta X$ is the largest 
%greatest element of the poset of 
compactification of $X$ (see, e.g., \cite[Ch.~3.5]{Engelking1989-qo}). 
%Notably, 
In fact, the correspondence $X\mapsto\beta X$ extends to
%taking the Stone-\v{C}ech compactification defines 
a functor which is left adjoint to the inclusion of the category of compact Hausdorff spaces into the category of completely regular spaces 
%\cite{Cech1937} 
(see, e.g., \cite[pp.130-131]{johnstone1982stone}). 

In algebraic logic, a special role is played
%occupied 
by those compact Hausdorff spaces that are zero-dimensional as they serve as Stone duals of Boolean algebras, and are widely known as {\em Stone spaces}. %\cite{Stone1934}. which , known as Stone spaces,  --  -- play a crucial role as the duals of Boolean algebras , 
This motivates the study of zero-dimensional compactifications, the theory of which resembles that of all compactifications: a topological space $X$ admits a zero-dimensional compactification iff it is a zero-dimensional (Hausdorff) space, and there exists the largest zero-dimensional compactification $\beta_0 X$, which is the Stone dual of the Boolean algebra $\mathsf{Clop}(X)$ of all clopen susbets of $X$ \cite{Banaschewski1955}. The compactification $\beta_0 X$ is known as the {\em Banaschewski compactification} and the correspondence $X \mapsto \beta_0 X$ extends to a functor which is left adjoint to the inclusion of the category of Stone spaces into the category of zero-dimensional spaces \cite{Banaschewski1955} (see also \cite{Banaschewski1963}). 
By Dwinger's theorem \cite[Thm.~13.1]{dwinger1961} (see also \cite{Magill1968}), all zero-dimensional compactifications of $X$ can be characterized as Stone duals of {\em Boolean bases}; that is, Boolean subalgebras of $\mathsf{Clop}(X)$ that are bases for the topology.   

%are of special importance: they are equivalent to specific rings of sets \cite{Magill1968}, they provide dual representations of products of Boolean algebras \cite{Dwinger1960}, and they allow describing different field representations of Boolean algebras \cite{Bauer1955}. Of special interest amongst these is the Banachewski compactification, which similarly to the Stone-\v{C}ech compactification,  provides a left adjoint to the inclusion of the category of zero-dimensional spaces into the category of Stone spaces.

% A classic result by Dwinger establishes a one-to-one correspondence between zero-dimensional compactifications and Boolean bases of a zero-dimensional space [REF].

% \color{blue} This should be mentoned right after the first sentence about compactifications. Zero-dimensional compactifications can come second. \color{black}
% It is also an crucial fact that taking the Stone–Cech compactification is . Likewise, taking the Banaschewski compactification is a left adjoint to the inclusion of the category of Stone spaces into the category of Hausdorff zero-dimensional spaces \cite{Banaschewski1955}. 

% \color{blue} Do we need this general sentence (especially since it gives very few references; there's a whole bunch more that could be given)? How about we directly transition to Priestley and mention that the above generalizes to that setting? The idea being that we are interested in spaces associated with boolean algebras, DLs, and HAs. \color{black}

Nachbin generalized the theory of compactifications to that of 
%The theory of 
order-compactifications (see, e.g., \cite{nachbin1965topology}), which was further developed 
%studied 
by numerous authors. 
%of order-topological spaces has also been thoroughly studied \cite{nachbin1965topology,BLATTER197556,Richmond1993,Nailana2000}. 
In this theory,
%Among order-compactifications, 
the role of zero-dimensional compactifications is played by Priestley order-compactifications \cite{Koubek1991,BezhanishviliMorandi2010}. Priestey spaces are of special importance in algebraic logic as they serve as Priestley duals of distributive lattices, thus generalizing Stone duality for Boolean algebras. 
%important class of order-topological spaces is given by Priestley spaces, compact spaces which are order-zero-dimensional. It was shown by Priestley \cite{Priestley1970} that the category of Priestley spaces and continuous order-preserving  maps is dual to the category of distributive lattices and lattice homomorphism, generalizing Stone duality. 
For ordered spaces, the analog of zero-dimensionality is that of order-zero-dimensionality, and an ordered space $X$ admits a Priestley order-compactification iff it is order-zero-dimensional \cite[Cor.~3.8]{BezhanishviliMorandi2010}. Moreover, there exists the largest Priestley order-compactification $\eta_0 X$, which is the Priestley dual of the distributive lattice $\mathsf{ClopUp}(X)$ of all clopen upsets of $X$, and the correspondence $X \mapsto \eta_0 X$ extends to a functor which is left adjoint to the inclusion of the category of Priestley spaces into the category of order-zero-dimensional spaces \cite[Rem.~3.10]{BezhanishviliMorandi2010}. Furthermore, all Priestley order-compactifications of $X$ can be characterized as Priestley duals of {\em Priestley bases}; that is, sublattices of $\mathsf{ClopUp}(X)$ that %give rise to bases for 
generate the topology of $X$  \cite[Sec.~5]{BezhanishviliMorandi2010}. Thus, both Banschewski's compactification  and Dwinger's theorem have natural generalizations to the order-zero-dimensional setting. 

%has been developed in  (see also \cite{Koubek1991}), where it was shown that there is a one-to-one correspondence between the poset of Priestley order-compactifications of an order-zero-dimensional space $X$ and the poset of Priestley bases (special rings of upsets) of $X$. 
%Moreover, the greatest Priestley compactification, $\eta_{0}$, provides an order-topological analogue of Banachewski's result: as a functor, it is the left adjoint to the inclusion of Priestley spaces in the category of order-zero-dimensional spaces (see Theorem \ref{cor: Adjunction for inclusions in Order-zero-dimensional and Priestley}).

% \color{blue} This is not true, you need an analogue of strong zero-dimesnionality for Nachbin to be Priestley. \color{black} It was shown in [Ref] that the Nachbin compactification provides a left adjoint of the inclusion of Prietley spaces into ... spaces. \color{blue} Not Nachbin but what we denote by $\eta_0$. \color{black} 

Alongside Stone spaces and Priestley spaces, Esakia spaces play a crucial role in algebraic logic as they serve as Priestley duals of Heyting algebras (see, e.g., \cite{Esakiach2019HeyAlg}). 
%Next to Priestley spaces, Esakia spaces play a crucial role in the theory of order-topological dualities. These are 
Esakia spaces are those Priestley spaces in which the order is continuous. 
%where the partial order is in addition continuous \cite{Esakiach2019HeyAlg}. The category of Esakia spaces and continuous p-morphisms is dual to the category of Heyting algebras and Heyting algebra homomorphisms \cite{esakiatopologicalkripke,Esakiach2019HeyAlg}. From a logical point of view, Esakia spaces provide a sound and complete order-topological semantics for all intermediate logics \cite{Chagrov1997-cr}.
%
%\rodrigonote{Should we have this sentence, since it refers to logics, which don't really appear anywhere else?}
%
% \color{blue} I think we need to explain things better in this paragraph. We need to talk about how we refine the notion of order-zero-diemnsionality to E-order-zero-dimensionality. We also need to explain the importance of p-morphisms, which is responsible for our new notion of image-compact, which needs to be explained. After that we can transition to locally Esakia which is the conjunction of the two, and point out that it is exactly this category that provides the right analog for us of the category of zero-dimensional Hausdorff spaces. \color{black}
Our aim is to develop the theory of %Priestley order-compactifications develop an analogous theory of 
Esakia order-compactifications that refines that of Priestley order-compactifications, and provide a generalization of the Banaschewski and Dwinger results in the setting of continuous orders. As we will see, this gives rise to two non-equivalent notions of order-compactifications, which we term Heyting and Esakia order-compactifications (see Definition \ref{def: Heyting and Esakia compactifications}).  These, in turn, induce two refinements of order-zero-dimensionality: H-order-zero-dimensionality and E-order-zero-dimensionality (see Definition \ref{def: H and E OZD}), 
%(H for Heyting and E for Esakia), 
the latter being especially pertinent for our purposes. 
%One of these, which we term E-order-zero-dimensionality, is especially relevant for the study of 

%In this paper we extend the theory of Priestley order-compactifications to the setting of Esakia spaces. More concretely, we define two notions, Heyting order-compactifications and Esakia order-compactifications, both with respect to order-zero-dimensional spaces. A Heyting order-compac\-ti\-fi\-cation is a Priestley order-compactification $Y$ such that $Y$ is an Esakia space. In Esakia order-compac\-ti\-fi\-cations we additionally require that the upset of each point $x$ in the original space is dense in the upset of $x$ in the compactification. Alongside these we  introduce the notions of $H$-order-zero-dimensionality and $E$-order-zero-dimensionality, corresponding to admitting respectively a Heyting and an Esakia order-compactification. $E$-order-zero-dimensionality has many natural characteristics of Esakia spaces, for example, it is equivalent to continuous orderability -- a property which may fail for $H$-order-zero-dimensional. We also provide an analogue of Dwinger’s theorem for both Heyting and Esakia order-compactifications in terms of Heyting rings of upsets and Esakia rings of upsets.

We provide an analog of Dwinger’s theorem for both Heyting and Esakia order-compactificati\-ons, and then consider the problem of functoriality of Esakia order-compactifications. Since the maps in the category of Esakia spaces are continuous p-morphisms, it is natural to consider them as the maps in the category of E-order-zero-dimensional spaces. This leads 
%We are thus lead 
to the key notion of a {\em locally Esakia space}, an E-order-zero-dimensional space in which the upset of each point is compact. This notion naturally generalizes both Esakia spaces and image-finite posets (in which the upset of each point is finite). We prove that the correspondence $X\mapsto \eta_0 X$ lifts to a functor which is left adjoint to the inclusion of the category of Esakia spaces into the category of locally Esakia spaces and continuous p-morphisms, thus providing an analog of Banaschewski's result for locally Esakia spaces.

We are happy to contribute this work to the volume dedicated to Mai Gehrke.
Mai has been a colleague and friend (for the second and third-named authors), and a central figure in the TACL community for decades.\footnote{TACL stands for Topology, Algebra and Categories in Logic \color{blue} \url{https://math.univ-cotedazur.fr/tacl/}. \color{black}}  Order-compactifications have played an important role in Mai’s work. For example, in her paper for the Esakia volume \cite{Gehrke2014},  
Mai connected order-compactifications of the 
$n$-universal models of intuitionistic logic to 
the bicompletions of quasi-uniform spaces. Her joint work \cite{Bezhanishvili2006} develops a theory of profinite completions which relies heavily on order-compactifications of image-finite posets. By introducing
locally Esakia spaces, we provide a natural generalizatation of this approach.

\section{Order-separation and order-zero-dimensionality}\label{Section: Preliminaries}

% \color{red} 2.1 and 2.2 can be sections on their own. \color{black}

We assume the reader's familiarity with lattice theory (see, e.g., \cite{balbes1974distributive}), topology (see, e.g., \cite{Engelking1989-qo}), and category theory (see, e.g., \cite{MacLane1978}). 
As in \cite{johnstone1982stone}, all lattices are assumed to be bounded and all lattice homomorphisms preserve the bounds. We will use letters $L,D$ to denote distributive lattices, $H$ to denote Heyting algebras, and $B$ to denote Boolean algebras.

As is customary, for a poset $X$ and $S\subseteq X$, we write 
\[
{\uparrow}S=\{x\in X \mid \exists s\in S : s\leq x\} \quad \mbox{and} \quad {\downarrow}S=\{x\in X \mid \exists s\in S : x\leq s\}.
\]
We say that $S$ is an \textit{upset} if $S={\uparrow}S$ and a {\em downset} if $S={\downarrow}S$. 
If $S = \{ x \}$ is a singleton, we simply write ${\uparrow}x$ and ${\downarrow}x$. We let $\mathsf{Up}(X)$ be the set of upsets of $X$, and note that $\mathsf{Up}(X)$ is a complete and completely distributive lattice, where joins and meets are set-theoretic union and intersection.

% \subsection{Order-separation and order-zero-dimensionality}

By an 
\textit{ordered topological space} or simply an {\em ordered space} we mean a poset $X$ which is also a topological space. 
The study of ordered topological spaces was pioneered by Nachbin. His main findings were collected in  %\color{blue} [REF]. \color{black}
%In this section we recall some basic facts about ordered-topological spaces. Most of the results of this section trace their origin to Nachbin's pioneering work 
\cite{nachbin1965topology}, which we use as one of the main references in this section.

\begin{definition}
    An ordered 
    %topological 
    space $X$ is a \textit{Nachbin space} if $X$ is compact and the order $\le$ is closed (in the product space $X\times X$).  
\end{definition}

It is well known (see \cite[p.~26]{nachbin1965topology}) that the order $\le$ being closed is equivalent to the following order-separation axiom: 
For each $x,y\in X$, if $x\nleq y$ then there exist an upset neighbourhood $U$ of $x$ and a downset neighbourhood $V$ of $y$ such that $U\cap V=\emptyset$. For obvious reasons, this separation is known as \textit{order-Hausdorffness} (see \cite{mccartan_1968}). In particular, it implies that $X$ is a Hausdorff space.

In this paper, we will mainly be interested in order-zero-dimensional spaces, which are order-topological analogues of zero-dimensional spaces. We recall that a subset of a topological space $X$ is {\em clopen} if it is both closed and open, and that $X$ is {\em zero-dimensional} if the set ${\sf Clop}(X)$ of clopen subsets of $X$ is a basis for the topology on $X$. For an ordered 
%topological 
space $X$, we let $\mathsf{ClopUp}(X)$ be the set of clopen upsets of $X$.
%, \color{red} and $\mathsf{ClopDown}(X)$ the set of clopen downsets\color{black}. 

%\color{blue} Should this be in the convention environment? \color{black}
%Throughout 
As a general rule, we will refer to 
%deal with properties of 
order-topological properties as 
%spaces. %Given a property $P$ we will refer to spaces as satisfying 
``order-P,"
 e.g., order-zero-dimensionality. Some authors call such spaces zero-dimensional ordered spaces (see, e.g., 
 %refer to this property as and completely-order-regular); this stands in contrast with some authors (e.g. see 
 \cite{Nailana2000}). Since the latter could alternatively be interpreted as an ordered space that happens to be zero-dimensional, we prefer the usage of ``order-P" to avoid any unnecessary confusion.
 %who will, for example, refer to ``completely regular ordered space"; we find that this might lead to confusion, and stick to our convention, since completely regular ordered spaces are not simply completely regular spaces which happen to be ordered.

\begin{definition}
   Let $X$ be an ordered 
   %topological 
   space.
   \begin{enumerate}
       \item (Priestley separation) We say that $X$ satisfies the {\em Priestley separation axiom} provided for each $x,y\in X$, if $x\not\le y$ then there is a clopen upset $U$ such that $x \in U$ and $y \notin U$.
       \item (Order-zero-dimensionality) We call $X$ \textit{order-zero-dimensional} if 
       \begin{enumerate}
           \item $X$ satisfies the Priestley separation axiom, and 
           \item the set $\{U \, \setminus \, V : U,V\in \mathsf{ClopUp}(X)\}$ is a basis for the topology.
       \end{enumerate}
   \end{enumerate} 
\end{definition}

\begin{remark}
It is clear that Priestley separation implies order-Hausdorffness. Moreover, since $U\,\setminus\,V$ is a convex set for each $U,V\in \mathsf{ClopUp}(X)$, each order-zero-dimensional space has a basis of clopen convex sets, hence is locally convex (see \cite[p.~27]{nachbin1965topology}).
\end{remark}

A class of ordered spaces closely related to order-zero-dimensional spaces is that of Priestley spaces, which play a central role in the representation theory of distributive lattices.

\begin{definition}\cite[258]{Davey2002-lr}
    An ordered 
    %topological 
    space $X$ is a \textit{Priestley space} if it is compact and satisfies the Priestley separation axiom.
\end{definition}

\begin{theorem} [Priestley duality,  \cite{Priestley1970}]
The category $\sf Pries$ of Priestley spaces and continuous order-preser\-ving maps is dually equivalent to the category $\sf DLat$ of distributive lattices and lattice homomorphisms. 
\end{theorem}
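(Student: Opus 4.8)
The plan is to exhibit a pair of contravariant functors between $\sf Pries$ and $\sf DLat$ and show that their composites are naturally isomorphic to the respective identity functors. In one direction, I would send a Priestley space $X$ to the lattice $\mathsf{ClopUp}(X)$ of its clopen upsets; since clopen upsets contain $\emptyset$ and $X$ and are closed under finite unions and intersections, this is a bounded sublattice of $\mathsf{Up}(X)$, hence distributive. A continuous order-preserving map $f\colon X\to Y$ pulls clopen upsets back to clopen upsets (continuity gives clopen, order-preservation gives upset), so sending $f$ to the restriction of $f^{-1}$ to $\mathsf{ClopUp}(Y)$ yields a lattice homomorphism $\mathsf{ClopUp}(Y)\to\mathsf{ClopUp}(X)$, and functoriality is immediate from $(g\circ f)^{-1}=f^{-1}\circ g^{-1}$. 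In the other direction, I would send a distributive lattice $D$ to its space $X_D$ of prime filters, topologized by taking $\{\widehat a \mid a\in D\}\cup\{X_D\setminus\widehat a \mid a\in D\}$ as a subbasis, where $\widehat a=\{F\mid a\in F\}$, and ordered by inclusion; a lattice homomorphism $h\colon D\to E$ is sent to $h^{-1}\colon X_E\to X_D$.

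The first substantive step is to verify that $X_D$ is indeed a Priestley space. Priestley separation is the easy part: if $F\not\subseteq G$ then any $a\in F\setminus G$ gives a clopen upset $\widehat a$ containing $F$ but not $G$. Compactness is where I expect the first real work: I would prove it via the Alexander subbasis lemma, reducing a subbasic open cover to a finite subcover by invoking the prime filter theorem (the distributive-lattice form of the Boolean prime ideal theorem) to locate a prime filter witnessing that no finite subfamily covers. Each $\widehat a$ is then clopen, and one checks that the $\widehat a$ are precisely the clopen upsets of $X_D$.

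The heart of the argument is establishing the two families of isomorphisms together with their naturality. For the lattice side, $a\mapsto\widehat a$ is a lattice homomorphism $D\to\mathsf{ClopUp}(X_D)$; injectivity and the fact that $a\le b$ iff $\widehat a\subseteq\widehat b$ follow from the prime filter theorem (if $a\not\le b$, separate them by a prime filter), while surjectivity follows by writing an arbitrary clopen upset as a finite union of subbasic opens and using compactness together with the upset condition to collapse it to a single $\widehat a$. For the space side, $x\mapsto\{U\in\mathsf{ClopUp}(X)\mid x\in U\}$ is a continuous order-embedding by Priestley separation; the crux is its surjectivity: given a prime filter $\mathcal F$ of clopen upsets, the family $\{U\mid U\in\mathcal F\}\cup\{X\setminus V\mid V\notin\mathcal F\}$ has the finite intersection property, so by compactness its intersection contains some point $x$, and one verifies $\mathcal F=\{U\mid x\in U\}$.

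I expect the main obstacle to be precisely these two surjectivity arguments, both of which rest on compactness (for the space side) and on the prime filter theorem (for the lattice side); the remaining verifications—that the two constructions are functorial and that the isomorphisms are natural in $D$ and in $X$ respectively—are routine diagram chases using the defining assignments $f\mapsto f^{-1}$ and $h\mapsto h^{-1}$.
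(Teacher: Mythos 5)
Your proposal is correct and takes essentially the same approach as the paper: the paper states Priestley duality as a known result (citing Priestley's original work) and recalls exactly the functors you construct --- $X \mapsto \mathsf{ClopUp}(X)$, $f \mapsto f^{-1}$ on one side, and $D \mapsto \mathsf{Spec}(D)$ (prime filters ordered by inclusion) with $h \mapsto h^{-1}$ on the other, your subbasis generating the same topology as the paper's basis $\{\phi(a)\,\setminus\,\phi(b) : a,b \in D\}$. The compactness and surjectivity arguments you outline (Alexander subbasis lemma plus the prime filter theorem, and the finite-intersection-property argument for the counit) are the standard and correct way to fill in the details the paper omits.
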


Priestley 
%spaces were introduced in \cite{Priestley1970} with the goal of providing a 
duality 
%theory for distributive lattices 
closely parallels Stone duality for Boolean algebras. We briefly recall the action of the functors on objects as this will be used in what follows.  With each Priestley space $X$ we associate its dual distributive lattice $\mathsf{ClopUp}(X)$, and with each continuous order-preserving map $f:X\to Y$, 
%between Priestley spaces, 
the lattice homomorphism $
%f^{*}=
f^{-1}:\mathsf{ClopUp}(Y)\to \mathsf{ClopUp}(X)$. 

With each distributive lattice $D$, we associate the Priestley space $\mathsf{Spec}(D)$ of prime fiters of $D$ ordered by inclusion and topologized by the 
    %denote by $(\mathsf{Spec}(D),\subseteq,\tau)$ the \textit{Priestley dual} of $D$, where $\tau$ is given by a 
    %sub
    basis $\{\phi(a)
    %:a\in D\}\cup\{\mathsf{Spec}(D)
    \,\setminus\,\phi(b) : a,b\in D\}$,  where
    \begin{equation*}
        \phi(a)=\{P\in \mathsf{Spec}(D) : a\in P\}.
    \end{equation*}
With each lattice homomorphism $f:D\to D'$ we associate the continuous order-preserving map 
%\begin{equation*}
    %f_{*}\coloneqq 
    $f^{-1}:\mathsf{Spec}(D')\to \mathsf{Spec}(D)$.
%\end{equation*}

%\color{red}
%The action of the functors on morphisms is rather simple. If $f:D\to D'$ is a distributive lattice homomorphism, then we associate to it its dual map
%\begin{equation*}
%    f_{*}\coloneqq f^{-1}:\mathsf{Spec}(D')\to \mathsf{Spec}(D),
%\end{equation*}
%which will be a continuous order-preserving map. In turn with each continuous and order-preserving map $f:X\to Y$ between Priestley spaces, $f^{*}=f^{-1}:\mathsf{ClopUp}(Y)\to \mathsf{ClopUp}(X)$ is a distributive lattice homomorphism.
%\color{black}

%\begin{proposition}\label{One half of Priestley duality}
%    Given a Priestley space $X$, $X\cong \mathsf{Spec}(\mathsf{ClopUp}(X))$.
%\end{proposition}
%\begin{proof}
%    (Add reference)
%\end{proof}

We will freely use the following well-known facts about Priestley spaces (see, e.g.,  \cite[Prop.~2.6]{Priestley1984} and \cite[Ch.~11]{Davey2002-lr}): 
%\cite[Lem.~11.22]{Davey2002-lr} for \ref{eqref: Clopen upsets and downsets form a subbasis}):

\begin{fact} \label{fact: Basic facts about Priestley spaces}
    %The following facts about Priestley spaces are well-known:
    For each Priestley space $X$, we have:
    \begin{enumerate}
        \item $X$ is a Nachbin space. Consequently, if $F$ is a closed subset of $X$, then both ${\uparrow}F$ and ${\downarrow}F$ are closed.
        \color{black}
        \item \label{eq: Closed upsets can be separated} 
        If $F$ and $G$ are closed subsets of $X$ such that %is a closed upset, $B$ is a closed downset, and 
        ${\uparrow}F\cap{\downarrow}G=\emptyset$, then there is a clopen upset $U$ of $X$ such that $F\subseteq U$ and $G\subseteq X \,\setminus\, U$.
        \item \label{eqref: Clopen upsets and downsets form a subbasis} The set $\{ U\,\setminus\,V : U,V\in\mathsf{ClopUp}(X) \}
        $ 
        forms a basis for the topology on $X$. Consequently, each clopen $W \subseteq X$ is of the form $W=\bigcup_{i=1}^{n}\left(U_{i}\,\setminus\,V_{i}\right)$, where $U_{i},V_{i}$ are clopen upsets.
    \end{enumerate}
\end{fact}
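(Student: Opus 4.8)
The plan is to establish the three items in turn. All three rest on the same two ingredients---the compactness of $X$ and the Priestley separation axiom---combined through finite-subcover arguments; the most substantive of these is item~(2), and the others follow by variants of the same technique.

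For item~(1), compactness of $X$ is built into the definition of a Priestley space, so I only need that $\le$ is closed in $X\times X$. I would show that its complement is open: if $x\not\le y$, Priestley separation supplies a clopen upset $U$ with $x\in U$ and $y\notin U$, and then $U\times(X\setminus U)$ is an open neighbourhood of $(x,y)$ disjoint from $\le$ (if $a\le b$ and $a\in U$ then $b\in U$ since $U$ is an upset, so $(a,b)\notin U\times(X\setminus U)$). Hence $X$ is a Nachbin space. For the ``consequently'' clause I would use that the projection $\pi_2\colon X\times X\to X$ is a closed map because the first factor is compact: writing ${\uparrow}F=\pi_2\big((\le)\cap(F\times X)\big)$ exhibits ${\uparrow}F$ as the image under a closed map of the closed set $(\le)\cap(F\times X)$, hence as a closed set; the argument for ${\downarrow}F$, using $\pi_1$, is symmetric.

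Item~(2) is the crux. Given closed $F,G$ with ${\uparrow}F\cap{\downarrow}G=\emptyset$, I first note that $x\not\le y$ for every $x\in F$ and $y\in G$: if $x\le y$ then $y\in{\uparrow}F\cap{\downarrow}G$, a contradiction. Priestley separation then yields, for each such pair, a clopen upset $U_{x,y}$ with $x\in U_{x,y}$ and $y\notin U_{x,y}$, and the proof proceeds by a nested compactness argument. Fixing $x\in F$, the clopen downsets $\{X\setminus U_{x,y}:y\in G\}$ cover the closed (hence compact) set $G$, so finitely many suffice and the corresponding intersection $U_x:=\bigcap_i U_{x,y_i}$ is a clopen upset containing $x$ and disjoint from $G$. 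Running $x$ over $F$, the clopen upsets $\{U_x:x\in F\}$ cover the compact set $F$, and a finite subfamily has union $U:=\bigcup_j U_{x_j}$, which is a clopen upset with $F\subseteq U$ and $U\cap G=\emptyset$, i.e. $G\subseteq X\setminus U$. The main obstacle here is purely organisational: keeping the two layers of compactness separate and using an ``upset'' in the right direction at each stage.

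For item~(3), each $U\setminus V$ with $U,V\in\mathsf{ClopUp}(X)$ is clopen, so it remains to show these sets form a basis. Given an open $W$ and a point $x\in W$, I would separate $x$ from the closed set $X\setminus W$. For each $y\in X\setminus W$ we have $x\ne y$, so by antisymmetry either $x\not\le y$ or $y\not\le x$; in the first case Priestley separation gives a clopen upset, and in the second a clopen downset, containing $x$ but not $y$. Thus each $y$ is excluded by a clopen set $N_y$ (an upset or a downset) containing $x$; by compactness of $X\setminus W$ finitely many complements $X\setminus N_{y_i}$ cover it, and $N:=\bigcap_i N_{y_i}$ satisfies $x\in N\subseteq W$. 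Grouping the upset factors into a clopen upset $U$ and the downset factors into a clopen downset $X\setminus V$ (with $V$ a clopen upset) gives $N=U\setminus V$, as required. Finally, if $W$ is clopen then it is compact, so the basic cover $W=\bigcup_\alpha(U_\alpha\setminus V_\alpha)$ admits a finite subcover $W=\bigcup_{i=1}^n(U_i\setminus V_i)$, yielding the stated normal form.
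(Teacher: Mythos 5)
Your proof is correct: each of the three items is established by the standard compactness-plus-Priestley-separation arguments (closedness of the order via clopen-upset neighbourhoods, projection along a compact factor, the nested finite-subcover argument for separation, and the grouping of clopen upsets and downsets for the basis). The paper offers no proof of this fact---it cites it as well known from Priestley's work and from Davey--Priestley---and your arguments are essentially the ones found in those references, so there is nothing further to reconcile.
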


\section{Order-compactifications and Priestley order-compactifications}

The notion of order-compactification has two natural generalizations to the setting of ordered %topological 
spaces. The more restrictive of the two was already studied by Nachbin \cite{nachbin1965topology}, which we call \textit{N-order-compactification}; the more general one was later introduced by Blatter \cite{BLATTER197556}, and became more standard. 

\begin{definition}
    A map $f:X\to Y$ between ordered 
    %topological 
    spaces is an \textit{order-embedding} if
    \begin{itemize}
        \item $f$ is a topological embedding;
        \item $x\leq y$ iff
        %if and only if 
        $f(x)\leq f(y)$.
    \end{itemize}
\end{definition}

\begin{definition}
    Let $X$ be an ordered 
    %topological 
    space.
    \begin{enumerate}
        \item An {\em order-compactification} of $X$ is a pair $(Y,e)$, where $Y$ is a Nachbin space and $e: X \to Y$ is an order-embedding such that $e[X]$ is dense in $Y$.
        \item We call an order-compactification $(Y,e)$ an {\em N-order-compactification} if the order $\le_Y$ on $Y$ is the closure of the image of $\le_X$ under the product map $e\times e : X \times X \to Y \times Y$.%\footnote{That is, the order on $X$ is dense in the order on $Y$.}
    \end{enumerate}
\end{definition}

\begin{remark} \label{rem: identification}
When it does not cause any confusion, we  identify $X$ with its image $e[X]$ in $Y$, view $X$ as a dense subspace of $Y$, and $Y$ as an order-compactification of $X$. Then, $Y$ is an N-order-compactification of $X$ iff the order on $X$ is dense in the order on $Y$.
%omitting the mention of the map $e$.
\end{remark}

It is a classic result that a topological space $X$ has an order-compactification iff it is completely regular, in which case the collection of  (equivalence classes of) all compactifications has a natural partial order and the Stone-\v{C}ech compactification $\beta X$ is the largest compactification in this partial order \cite[pp.166-168]{Engelking1989-qo}. These results generalize to the setting of order-compactifications \cite{BLATTER197556}.

%\color{blue} We need to agree about what to call things. Should we follow Nachbin in simply adding ``ordered'' to a topological property P, like completely regular ordered space, or rather say order-P? \color{black}

%\color{red}
%Rodrigo: I'm happy with either option!
%\color{black}

\begin{definition} \cite[Sec.~II.1]{nachbin1965topology}
    %Let $(X,\leq)$ be 
    An ordered 
    %topological 
    space $X$ is
    %\begin{enumerate}
    %    \item \textit{Order-Hausdorff} \footnote{This term was first introduced by McCartan in \cite{mccartan_1968}.} if for each $x,y\in X$, if $x\nleq y$, then there exists some upset neighbourhood $U$ of $x$ and a downset neighbourhood $V$ of $y$ such that $U\cap V=\emptyset$.
    %    \item 
    \textit{completely-order-regular}
    %\footnote{These were introduced in \cite[pp.54]{nachbin1965topology} under the name ``completely regular ordered spaces". We follow \cite{BezhanishviliMorandi2010} in this naming convention.} 
    if
        \begin{enumerate}
            \item For each $x,y\in X$, if $x\nleq y$ then there is a continuous order-preserving %function 
            $f:X\to [0,1]$ such that $f(x)>f(y)$.
            \item For each $x\in X$ and  closed set $F$ with $x\notin F$, there exist a continuous order-preserving 
            %function 
            $f:X\to [0,1]$ and a continuous order-reversing %function 
            $g:X\to [0,1]$ such that $f(x)=1=g(x)$ and $F\subseteq f^{-1}(0)\cup g^{-1}(0)$.
        \end{enumerate}
    %\end{enumerate}
\end{definition}

%The notion of strong order-compactification was originally introduced by Nachbin, under the term of order-compactifications; Blatter later generalized this \todo{Add references}. 

\begin{theorem} {\em \cite[Thm.~1.5(i)]{BLATTER197556}}
    An ordered space has an order-compactification iff it is comple\-tely-order-regular.
\end{theorem}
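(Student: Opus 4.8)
The plan is to prove the biconditional in two directions, with the forward direction being the easy one. First I would show that if $X$ has an order-compactification $(Y,e)$, then $X$ is completely-order-regular. Since $Y$ is a Nachbin space, it is order-Hausdorff, and one expects it to be completely-order-regular as a known consequence of compactness together with a closed order (this is essentially Nachbin's theorem that compact ordered spaces with closed order are completely-order-regular, via the Stone--Weierstrass-type separation of points and closed sets by continuous monotone maps into $[0,1]$). Because $e$ is an order-embedding, the two defining conditions of complete-order-regularity transfer from $Y$ back to $X$: given $x \nleq y$ in $X$, apply the corresponding separation in $Y$ to $e(x) \nleq e(y)$ and precompose the resulting monotone map $Y \to [0,1]$ with $e$; given $x \notin F$ with $F$ closed in $X$, extend $F$ to its closure $\overline{e[F]}$ in $Y$ (noting $e(x) \notin \overline{e[F]}$ since $e$ is a topological embedding into a Hausdorff space) and again restrict the monotone/antitone maps obtained in $Y$ along $e$. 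The only care needed is checking that closedness and the relevant separation pull back correctly through an embedding, which is routine.

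For the converse, the substantive direction, I would explicitly construct an order-compactification from the continuous order-preserving and order-reversing maps guaranteed by complete-order-regularity. The natural approach mirrors the classical Stone--Čech construction: collect the family $\mathcal{F}$ of all continuous order-preserving maps $X \to [0,1]$, form the product cube $[0,1]^{\mathcal{F}}$ ordered coordinatewise (which is a Nachbin space, being a product of the Nachbin space $[0,1]$), and define the evaluation map $e : X \to [0,1]^{\mathcal{F}}$ by $e(x) = (f(x))_{f \in \mathcal{F}}$. Then $Y := \overline{e[X]}$ is the candidate order-compactification. Since a product of compact ordered spaces with closed order again has closed order, $Y$ is a Nachbin space, and $e[X]$ is dense in $Y$ by construction. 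The nontrivial work is verifying that $e$ is an order-embedding.

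The hard part will be establishing that $e$ is an order-\emph{embedding} rather than merely a continuous order-preserving map, and this is precisely where both conditions in the definition of complete-order-regularity are used. That $x \leq y$ implies $e(x) \leq e(y)$ is immediate from monotonicity of each $f \in \mathcal{F}$. The reverse implication, $e(x) \leq e(y) \Rightarrow x \leq y$, is exactly what condition (1) provides: if $x \nleq y$, some $f$ separates them with $f(x) > f(y)$, forcing $e(x) \nleq e(y)$ coordinatewise. To see that $e$ is a topological embedding, I would show $e$ is injective (order-Hausdorffness, itself a consequence of condition (1), gives this) and that $e$ is an open map onto its image, equivalently that $e^{-1}$ is continuous; here condition (2) is essential, since separating a point $x$ from a closed set $F$ by a monotone $f$ and an antitone $g$ lets one exhibit a neighbourhood of $e(x)$ in $e[X]$ that avoids $e[F]$, built from the subbasic clopen-type sets determined by $f$ and $g$. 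Assembling these gives that $e$ is a homeomorphism onto $e[X]$, completing the proof that $(Y,e)$ is an order-compactification.

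I should note that this result is attributed to Blatter and the cleanest exposition may instead cite Nachbin's foundational separation theorem as a black box; if so, the forward direction reduces to transferring complete-order-regularity along an embedding into a Nachbin space, and the converse reduces to the cube embedding argument sketched above. The main conceptual obstacle throughout is ensuring that the order-theoretic separation interacts correctly with the topological separation, so that a single evaluation map simultaneously witnesses the order-embedding and the topological-embedding requirements.
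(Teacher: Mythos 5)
The paper does not actually prove this statement: it is quoted verbatim from Blatter (Thm.~1.5(i)) and used as a black box, so there is no internal proof to compare yours against. Judged on its own merits, your sketch is the standard Nachbin--Blatter argument and is correct in outline: the forward direction transfers complete-order-regularity back along the order-embedding, using the fact (which the paper itself also cites, from Nachbin) that a compact ordered space with closed order is completely-order-regular; the converse is the evaluation embedding into the cube $[0,1]^{\mathcal{F}}$, with condition (1) giving the order-embedding and injectivity, and condition (2) giving openness of $e$ onto its image. One detail worth tightening: your family $\mathcal{F}$ consists only of \emph{order-preserving} maps, so the antitone map $g$ produced by condition (2) is not a coordinate of the cube; you must replace it by the order-preserving map $1-g \in \mathcal{F}$ (or else index the product by both monotone and antitone maps and reverse the order on the antitone coordinates). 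With that substitution, the neighbourhood $\{z : z_f > 1/2,\ z_{1-g} < 1/2\}$ of $e(x)$ misses $e[F]$ exactly as you intend, and the rest of the verification (the product order is closed, hence $Y = \overline{e[X]}$ is Nachbin, and the pullback of closedness through the embedding in the forward direction) is routine as you say.
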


%\color{red}It was shown by Blatter that an ordered space has an order-compactification iff it is completely-order-regular \cite[5.16]{Blatter1976} \color{black}. 

We also have an obvious order on order-compactifications:

\begin{definition}
    Let $(Y_{1},e_{1})$ and $(Y_{2},e_{2})$ be order-compactifications of a completely-order-regular %topological 
    space $X$. We write $(Y_{1},e_{1})\preceq (Y_{2},e_{2})$ if there exists a continuous order-preserving 
    %function 
    ${f:Y_{2}\to Y_{1}}$ such that $f \circ e_{2} = e_{1}$. 
\end{definition}

Since $e_1[X]$ is dense in $Y_1$, such an $f$ is always onto. Moreover, as for compactifications, we have that $(Y_{1},e_{1})\preceq (Y_{2},e_{2})$ and $(Y_{2},e_{2})\preceq (Y_{1},e_{1})$ iff there is an order-homeomorphism 
%between $Y_1$ and $Y_2$ 
$f:Y_1\to Y_2$ such that $f \circ e_{1} = e_{2}$, and this defines a partial order on the collection of (equivalence classes of) order-compactifications \cite[57]{BLATTER197556},
%\cite[5.16]{Blatter1976}
which we denote by $\mathcal{K}(X)$. 
%be the poset of (equivalence classes of) order-compactifications of $X$.
%\cite{Richmond1993}. 

The largest element of this poset 
%of order-compactifications of a completely-order-regular space 
was constructed by Nachbin \cite[App.~2]{nachbin1965topology} by generalizing the construction of the Stone-\v{C}ech compactificaion to the setting of ordered spaces. We refer to it as the {\em Nachbin order-compactification} and denote it by $\eta X$.\footnote{The Nachbin order-compactification is sometimes denoted by $\beta_0 X$, $\beta_1 X$ (see, e.g., \cite{choe1979,Richmond1993}), 
or $n X$ (see, e.g., \cite{Bezhanishvili2006,BezhanishviliMorandi2010}).
%to allow ease of reading, we introduce this notation.
} It is worth pointing out that $\eta X$ is in fact an N-order-compactification \cite[p.~103]{nachbin1965topology}.  Similar to the Stone-\v{C}ech compactification, the Nachbin order-compactification has the following universal mapping property:

\begin{theorem} {\em (see, e.g., \cite[p.~56]{choe1979})}
    If $Z$ is a Nachbin space, then each continuous order-preserving 
    %function 
    $f:X\to Z$ has a unique extension to a continuous order-preserving 
    %function 
    $g:\eta X\to Z$. 
    %such that $g\circ e=f$.
\end{theorem}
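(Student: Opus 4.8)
The plan is to prove uniqueness first and then existence. For uniqueness, recall that a Nachbin space is order-Hausdorff, hence Hausdorff, and that $e[X]$ is dense in $\eta X$. Thus if $g_1,g_2\colon\eta X\to Z$ are two continuous extensions of $f$, they agree on the dense subset $e[X]$ and therefore on all of $\eta X$. So at most one extension exists, and it suffices to construct one.

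For existence, the strategy is to realize both $\eta X$ and $Z$ inside cubes and extend coordinate-wise. Recall that $\eta X$ is obtained, \emph{à la} Stone--\v{C}ech, as the closure of the image of the evaluation embedding $e\colon X\to[0,1]^{I}$, where $I$ denotes the set of all continuous order-preserving maps $X\to[0,1]$, the cube carrying the product topology and the product order; complete-order-regularity of $X$ guarantees that $e$ is an order-embedding, and since $[0,1]^{I}$ is a Nachbin space and closed subspaces of Nachbin spaces are Nachbin, $\eta X=\overline{e[X]}$ is a Nachbin space. On the target side, $Z$ is an order-compactification of itself via the identity, so by Blatter's theorem it is completely-order-regular; hence the analogous evaluation map embeds $Z$ as an order-subspace of a cube $[0,1]^{J}$, with $J$ the set of continuous order-preserving maps $Z\to[0,1]$. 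As $Z$ is compact, its image is closed in $[0,1]^{J}$. We identify $Z$ with this closed subspace and write $\pi_j\colon Z\to[0,1]$ for the restriction of the $j$-th projection, each of which is continuous and order-preserving.

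Now I would build the extension as follows. For each $j\in J$, the composite $\pi_j\circ f\colon X\to[0,1]$ is continuous and order-preserving, hence equals one of the coordinate maps indexing $\eta X$; let $p_j\colon\eta X\to[0,1]$ be the corresponding product projection restricted to $\eta X$. Then $p_j$ is continuous and order-preserving, and by construction of $e$ it extends $\pi_j\circ f$, i.e.\ $p_j(e(x))=\pi_j(f(x))$ for all $x\in X$. Assembling these, define $g\colon\eta X\to[0,1]^{J}$ by $g(y)=(p_j(y))_{j\in J}$. Being continuous and order-preserving in each coordinate, $g$ is continuous and order-preserving, and on $e[X]$ it agrees with $f$ under the identification of $Z$ with its image. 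It remains to check that $g$ actually lands in $Z$: since $Z$ is closed in $[0,1]^{J}$ and $g$ is continuous with $g[e[X]]=f[X]\subseteq Z$, we get $g[\eta X]=g\big[\overline{e[X]}\big]\subseteq\overline{g[e[X]]}\subseteq\overline{Z}=Z$. Corestricting $g$ to $Z$ yields the desired continuous order-preserving extension.

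The main obstacle is this final containment step, i.e.\ verifying that the coordinate-wise extension does not escape $Z$; this is exactly where compactness of $Z$ (making its image closed) and its realization as an order-subspace of a cube are essential. A secondary point requiring care is the identification of each $\pi_j\circ f$ with a genuine coordinate of the embedding defining $\eta X$, which is what allows the projection $p_j$ to extend $\pi_j\circ f$; once these are in place, continuity and order-preservation of $g$ are inherited coordinate-wise from the projections.
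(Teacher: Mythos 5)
Your proof is correct and is essentially the same argument that lies behind the result as the paper presents it: the paper gives no proof of this theorem, deferring to Nachbin and Choe, whose construction of $\eta X$ is exactly the cube closure $\overline{e[X]}\subseteq[0,1]^{I}$ you use, and your coordinate-projection extension together with the density-plus-Hausdorff uniqueness argument is the standard proof of the universal property in those sources. The one step you assert without detail---that complete-order-regularity makes the evaluation $e\colon X\to[0,1]^{I}$ a \emph{topological} (not merely order-) embedding---is where clause (2) of complete-order-regularity enters, via the observation that $1-g$ is continuous and order-preserving whenever $g$ is continuous and order-reversing; since this is part of the construction you are citing rather than proving, it is not a gap.
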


For various constructions of $\eta X$ we refer to \cite[pp.~56--57]{choe1979}. As with the Stone-\v{C}ech compactification, the construction of $\eta X$ is involved, but things simplify if $X$ is order-zero-dimensional.

%We now restrict our attention to those $X$  that are order-zero-dimensional.

\begin{definition}{\cite{Koubek1991,BezhanishviliMorandi2010}}
    We call an order-compactification $Y$ of $X$ a \textit{Priestley order-compactification} if $Y$ is a Priestley space (equivalently, $Y$ is order-zero-dimensional).
\end{definition}

We point out that $X$ has a Priestley order-compactification iff $X$ is order-zero-dimensional \cite[Thm.~3.5]{BezhanishviliMorandi2010}. 
%\color{red} Consequently, 
For such an $X$, let $\mathcal{K}_{0}(X)$ be the subposet of 
%the poset of 
$\mathcal{K}(X)$ consisting of Priestley order-compactifications. The poset $\mathcal{K}_{0}(X)$ can be described 
 %Priestley order-compactifications can be characterized 
in terms of special rings of sets studied in \cite{BezhanishviliMorandi2010}:

\begin{definition}\ \label{Priestley rings}
    \begin{enumerate}
        \item A \textit{ring of upsets} of a poset $X$ is a collection $\mathcal{R}\subseteq \mathsf{Up}(X)$ closed under finite unions and finite intersections (in particular, $\mathcal{R}$  contains $\emptyset$ and $X$).
        \item We call $\mathcal{R}$ a \textit{Priestley ring} if whenever $x,y\in X$ and $x\nleq y$, there is $A\in \mathcal{R}$ with $x\in A$ and $y\notin A$.
        \item If $X$ is an ordered space, then a Priestley ring $\mathcal{R}$ is a \textit{Priestley basis} if $\{U \,\setminus\, V \mid U,V\in \mathcal{R}\}$ is a basis for the topology on $X$. Let $\mathfrak{PB}(X)$ be the set of Priestley bases for $X$.  
    \end{enumerate}
\end{definition}

The following generalizes the corresponding result for Stone compactifications of a zero-dimensi\-onal space \cite[Thm.~13.1]{dwinger1961} (see also \cite{Magill1968}):

\begin{theorem}\label{thm: Isomorphism between Priestley bases and Priestley compactifications}
    {\em \cite[Thm.~5.2]{BezhanishviliMorandi2010}}
    For an order-zero-dimensional space $X$, the poset $(\mathcal{K}_{0}(X),\preceq)$ is isomorphic to $(\mathfrak{PB}(X),\subseteq)$.
\end{theorem}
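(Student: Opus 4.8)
The plan is to exhibit an explicit order-isomorphism $\Psi\colon\mathcal{K}_0(X)\to\mathfrak{PB}(X)$ together with a candidate inverse, and to check that they are mutually inverse and monotone in both directions, using Priestley duality as the main engine. Given a Priestley order-compactification $(Y,e)$, I identify $X$ with the dense subspace $e[X]$ and set
\[
\Psi(Y,e)=\mathcal{R}_Y=\{\,e^{-1}(W) : W\in\mathsf{ClopUp}(Y)\,\}.
\]
In the reverse direction, given $\mathcal{R}\in\mathfrak{PB}(X)$, I regard $\mathcal{R}$ as a bounded distributive lattice (a sublattice of $\mathsf{Up}(X)$), form its Priestley dual $\mathsf{Spec}(\mathcal{R})$, and define $e_{\mathcal{R}}\colon X\to\mathsf{Spec}(\mathcal{R})$ by $e_{\mathcal{R}}(x)=\{A\in\mathcal{R}:x\in A\}$. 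I would then show that $(Y,e)\mapsto\mathcal{R}_Y$ and $\mathcal{R}\mapsto(\mathsf{Spec}(\mathcal{R}),e_{\mathcal{R}})$ invert one another up to equivalence and translate $\preceq$ into $\subseteq$.

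First I would verify that both constructions are well defined. The collection $\mathcal{R}_Y$ is a ring of upsets since $e$ is an order-embedding and $\mathsf{ClopUp}(Y)$ is a sublattice; it is a Priestley ring because $x\nleq y$ gives $e(x)\nleq e(y)$, and Priestley separation in $Y$ supplies a clopen upset $W$ with $e(x)\in W\not\ni e(y)$; and it is a Priestley basis since the basis $\{U\setminus V : U,V\in\mathsf{ClopUp}(Y)\}$ of $Y$ (Fact~\ref{fact: Basic facts about Priestley spaces}) restricts to the basis $\{e^{-1}(U)\setminus e^{-1}(V)\}$ of the subspace $X$. For the reverse map, $e_{\mathcal{R}}(x)$ is a prime filter of $\mathcal{R}$, hence a point of $\mathsf{Spec}(\mathcal{R})$; the Priestley-ring property makes $e_{\mathcal{R}}$ order-reflecting (hence injective) while upward closure of elements of $\mathcal{R}$ makes it order-preserving; since $e_{\mathcal{R}}^{-1}(\phi(A))=A$, the basic opens $\phi(A)\setminus\phi(B)$ pull back exactly to the Priestley-basis elements $A\setminus B$, so $e_{\mathcal{R}}$ is an order-embedding; and density holds because a nonempty basic open $\phi(A)\setminus\phi(B)$ forces $A\not\subseteq B$ in $X$, so any $x\in A\setminus B$ satisfies $e_{\mathcal{R}}(x)\in\phi(A)\setminus\phi(B)$. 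Thus $(\mathsf{Spec}(\mathcal{R}),e_{\mathcal{R}})$ is a Priestley order-compactification.

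That the two constructions are mutually inverse rests on Priestley duality together with a density observation. Starting from $\mathcal{R}$, Priestley duality gives $\mathsf{ClopUp}(\mathsf{Spec}(\mathcal{R}))=\{\phi(A):A\in\mathcal{R}\}$ and $e_{\mathcal{R}}^{-1}(\phi(A))=A$, whence $\mathcal{R}_{\mathsf{Spec}(\mathcal{R})}=\mathcal{R}$. Starting from $(Y,e)$, the restriction $W\mapsto e^{-1}(W)$ is a surjective lattice homomorphism $\mathsf{ClopUp}(Y)\to\mathcal{R}_Y$, and it is injective because two clopen upsets that agree on the dense set $e[X]$ have equal closures and so coincide; applying $\mathsf{Spec}$ to this isomorphism recovers $Y$ with its embedding. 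For monotonicity, if $(Y_1,e_1)\preceq(Y_2,e_2)$ via $f\colon Y_2\to Y_1$ with $f\circ e_2=e_1$, then for $W\in\mathsf{ClopUp}(Y_1)$ we have $e_1^{-1}(W)=e_2^{-1}(f^{-1}(W))\in\mathcal{R}_{Y_2}$, so $\mathcal{R}_{Y_1}\subseteq\mathcal{R}_{Y_2}$. Conversely, a sublattice inclusion $\mathcal{R}_{Y_1}\subseteq\mathcal{R}_{Y_2}$ dualizes to a continuous order-preserving map $P\mapsto P\cap\mathcal{R}_{Y_1}$ from $\mathsf{Spec}(\mathcal{R}_{Y_2})$ to $\mathsf{Spec}(\mathcal{R}_{Y_1})$; transporting it along the isomorphisms $Y_i\cong\mathsf{Spec}(\mathcal{R}_{Y_i})$ yields $f$ with $f(e_2(x))=e_2(x)\cap\mathcal{R}_{Y_1}=e_1(x)$, so $(Y_1,e_1)\preceq(Y_2,e_2)$.

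The genuinely lattice-theoretic steps are routine; the two points demanding care are (i) the density argument that a clopen upset of $Y$ is determined by its trace on $e[X]$, which both makes $\Psi$ injective and guarantees that $\mathsf{Spec}(\mathcal{R}_Y)$ reproduces $Y$ rather than a proper quotient, and (ii) the bookkeeping showing that every map manufactured by dualizing commutes with the embeddings, so that $\Psi$ descends to equivalence classes and respects $\preceq$ in both directions. I expect (i) to be the crux, since it is exactly the place where the topological hypothesis—denseness of $X$ in $Y$—must interact with Priestley duality.
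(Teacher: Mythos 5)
Your proof is correct and follows exactly the correspondence the paper records in Remark~\ref{Isomorphism of Priestley bases}: $(Y,e)\mapsto\mathcal{R}_Y$ and $\mathcal{R}\mapsto(\mathsf{Spec}(\mathcal{R}),e_{\mathcal{R}})$, with the order $\preceq$ matched to inclusion of bases via Priestley duality. Note that the paper itself offers no proof of Theorem~\ref{thm: Isomorphism between Priestley bases and Priestley compactifications}, importing it from \cite[Thm.~5.2]{BezhanishviliMorandi2010}; your argument is a sound reconstruction of that cited result, and you correctly isolate the crux, namely that a clopen upset of $Y$ equals the closure of its trace on the dense set $e[X]$, which is what makes $W\mapsto e^{-1}(W)$ injective and forces $\mathsf{Spec}(\mathcal{R}_Y)$ to reproduce $Y$ rather than a proper quotient.
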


\begin{remark}\label{Isomorphism of Priestley bases}
   The isomorphism of Theorem \ref{thm: Isomorphism between Priestley bases and Priestley compactifications} associates with each Priestley order-compactificati\-on $(Y,e)$ of $X$ the Priestley basis $$\mathcal{R}_{Y}:=\{ e^{-1}(U) : U\in \mathsf{ClopUp}(Y)\},$$
   and with each Priestley basis $\mathcal{R}$ of $X$, the Priestley order-compactification $(\mathsf{Spec}(\mathcal{R}),e)$, where $\mathsf{Spec}(\mathcal{R})$ is the Priestley space of $\mathcal{R}$ and $e : X \to \mathsf{Spec}(\mathcal{R})$ is given by $$e(x) = \{ U \in \mathcal{R} : x \in U \}.$$
\end{remark}

For a Priestley order-compactification $(Y,e)$ of $X$, let 
$$\mathcal{B}_{Y}:=\{ e^{-1}(W) : W\in \mathsf{Clop}(Y)\}.$$
Then $\mathcal{B}_{Y}$ is a Boolean basis of $X$ (that is, $\mathcal{B}_Y$ is a Boolean subalgebra of $\mathsf{Clop}(X)$ and a basis for the topology on $X$).

\begin{definition}\label{Defn: N-bases}
Let $X$ be order-zero-dimensional and $(Y,e)$ a Priestley order-compactification of $X$. We say that the associated Priestley basis $\mathcal{R}_{Y}$ is an \textit{N-basis} provided for all $W,V\in \mathcal{B}_{Y}$, $${\uparrow}W\cap {\downarrow}V=\varnothing \Longrightarrow \exists K\in \mathcal{R}_{Y} : W\subseteq K \mbox{ and } V\subseteq X \,\setminus\, K.$$
\end{definition}

The next theorem characterizes which Priestley order-compacti\-fi\-cations are N-order-compacti\-fi\-cations.

%\begin{lemma}
%Let $X$ be a space, $Y$ an order-zero-dimensional order-compactification and $R_{Y}$ the Priestley base associated to this order-compactification. If $W\in B_{Y}$, then $cl_{Y}(W)$ is clopen. Moreover if $U\in R_{Y}$, then $cl_{Y}(U)$ is a clopen upset.
%\end{lemma}
%\begin{proof}
%Immediate from the definitions and the fact that $X$ is dense.
%\end{proof}

\begin{theorem}\label{Prop: N-order-compactifications and N-bases}
Let $X$ be order-zero-dimensional and $(Y,e)$ a Priestley order-compactification of $X$. Then $(Y,e)$ is an N-order-compactification iff $\mathcal{R}_{Y}$ is an N-base.
\end{theorem}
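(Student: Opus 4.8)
The plan is to work under the identification of Remark~\ref{rem: identification}, viewing $X$ as a dense subspace of $Y$ with $e$ the inclusion, so that $e^{-1}(S)=S\cap X$ for every $S\subseteq Y$; thus $\mathcal{R}_Y=\{U\cap X : U\in\mathsf{ClopUp}(Y)\}$ and $\mathcal{B}_Y=\{W\cap X : W\in\mathsf{Clop}(Y)\}$. By Remark~\ref{rem: identification}, $(Y,e)$ being an N-order-compactification means precisely that $\le_Y$ equals the closure $\overline{\le_X}$ of $\le_X$ in $Y\times Y$; since $e$ is an order-embedding and $\le_Y$ is closed (Fact~\ref{fact: Basic facts about Priestley spaces}(1)), the inclusion $\overline{\le_X}\subseteq{\le_Y}$ always holds, so the real content is the reverse inclusion ${\le_Y}\subseteq\overline{\le_X}$. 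Two elementary observations drive both directions. First, for any clopen $S\subseteq Y$ one has $\overline{S\cap X}=S$, because density of $X$ makes $S\cap X$ dense in the open set $S$, which is also closed. Second, for $W=\widetilde W\cap X$ and $V=\widetilde V\cap X$ with $\widetilde W,\widetilde V$ clopen in $Y$, the hypothesis ${\uparrow}W\cap{\downarrow}V=\emptyset$ computed in $X$ holds iff there are no $w\in W$, $v\in V$ with $w\le v$, i.e. iff $(W\times V)\cap{\le_X}=\emptyset$; since $W\times V=(\widetilde W\times\widetilde V)\cap(X\times X)$ and ${\le_X}={\le_Y}\cap(X\times X)$, this is the same as $(\widetilde W\times\widetilde V)\cap{\le_X}=\emptyset$.

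For the forward implication, assume ${\le_Y}=\overline{\le_X}$ and take $W,V\in\mathcal{B}_Y$ with ${\uparrow}W\cap{\downarrow}V=\emptyset$, writing $W=\widetilde W\cap X$ and $V=\widetilde V\cap X$. I first show ${\uparrow}\widetilde W\cap{\downarrow}\widetilde V=\emptyset$ in $Y$: a point in this intersection would yield $a\in\widetilde W$ and $b\in\widetilde V$ with $a\le_Y b$, so $(a,b)\in{\le_Y}=\overline{\le_X}$; but then the open neighbourhood $\widetilde W\times\widetilde V$ of $(a,b)$ would meet $\le_X$, contradicting the second observation. As $\widetilde W,\widetilde V$ are closed, Fact~\ref{fact: Basic facts about Priestley spaces}(2) gives a clopen upset $U$ of $Y$ with $\widetilde W\subseteq U$ and $\widetilde V\subseteq Y\,\setminus\,U$. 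Setting $K:=U\cap X\in\mathcal{R}_Y$ yields $W\subseteq K$ and $V\subseteq X\,\setminus\,K$, as required.

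For the converse, assume $\mathcal{R}_Y$ is an N-base and fix $(a,b)$ with $a\le_Y b$; I must place $(a,b)$ in $\overline{\le_X}$. Arguing by contradiction, if $(a,b)\notin\overline{\le_X}$ then, since the clopen sets form a basis of the Priestley space $Y$ (Fact~\ref{fact: Basic facts about Priestley spaces}(3)), there are clopen $\widetilde W\ni a$ and $\widetilde V\ni b$ with $(\widetilde W\times\widetilde V)\cap{\le_X}=\emptyset$. Putting $W=\widetilde W\cap X$ and $V=\widetilde V\cap X$, the second observation gives ${\uparrow}W\cap{\downarrow}V=\emptyset$, so the N-base property produces $K=\widetilde K\cap X\in\mathcal{R}_Y$, with $\widetilde K$ a clopen upset of $Y$, such that $W\subseteq K$ and $V\subseteq X\,\setminus\,K$. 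Taking closures in $Y$ and applying the first observation, $W\subseteq K$ upgrades to $\widetilde W\subseteq\widetilde K$ and $V\subseteq X\,\setminus\,K$ upgrades to $\widetilde V\subseteq Y\,\setminus\,\widetilde K$; hence $a\in\widetilde K$ and $b\notin\widetilde K$. But $\widetilde K$ is an upset and $a\le_Y b$, forcing $b\in\widetilde K$, a contradiction. Therefore ${\le_Y}\subseteq\overline{\le_X}$, completing the argument.

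I expect the only genuinely delicate point to be the passage between subsets of $X$ and clopen subsets of $Y$ via closure: concretely, verifying $\overline{S\cap X}=S$ for clopen $S$ and checking that the inclusions $W\subseteq K$ and $V\subseteq X\,\setminus\,K$ on $X$ lift to $\widetilde W\subseteq\widetilde K$ and $\widetilde V\subseteq Y\,\setminus\,\widetilde K$ on $Y$ (here it is essential that $\widetilde K$ and $Y\,\setminus\,\widetilde K$ are both clopen, so each equals its own closure). Everything else is a routine combination of density, the reformulation of ${\uparrow}W\cap{\downarrow}V=\emptyset$ in terms of $\le_X$, and the separation property of Fact~\ref{fact: Basic facts about Priestley spaces}(2).
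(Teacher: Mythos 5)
Your proof is correct and follows essentially the same route as the paper's: the forward direction uses density and Fact~\ref{fact: Basic facts about Priestley spaces}.\ref{eq: Closed upsets can be separated} to separate $\widetilde W$ and $\widetilde V$ by a clopen upset, and the converse lifts the trace inclusions $W\subseteq K$, $V\subseteq X\setminus K$ back to $Y$ via closures of clopens to contradict $a\le_Y b$. The only difference is organizational (you argue the converse directly by contradiction, the paper by contraposition, and you isolate the two density/order observations as explicit lemmas), which does not change the substance.
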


\begin{proof}
To simplify notation, we assume that $X$ is a subspace of $Y$ and $e$ is the identity (see Remark~\ref{rem: identification}).
First suppose that $Y$ is an N-order-compactification of $X$. Then $\le_Y$ is the closure of $\le_X$ (in $Y^2$). Let $W,V$ be clopens in $Y$ such that ${\uparrow}_X(W\cap X)\cap {\downarrow}_X(V\cap X)=\emptyset$, where ${\uparrow}_{X}(W \cap X)$ denotes the upset of $W \cap X$ in $X$ and ${\downarrow}_X(V\cap X)$ the downset of $V\cap X$ in $X$. We show that ${\uparrow}W \cap {\downarrow}V=\varnothing$ in $Y$. By assumption, ${\uparrow}_X(W\cap X)\cap {\downarrow}_X(V\cap X)=\emptyset$, so $\leq_{X}\cap (W\times V) = \emptyset$. Since $Y$ is an N-order-compactification, %of $X$, 
$\le_Y$ is the closure of $\le_X$. Therefore, $\le_Y \cap (W\times V) = \emptyset$, and hence ${\uparrow}W \cap {\downarrow}V=\varnothing$.
%\color{red} For suppose not; then we will have $y\leq_{Y} x$ where $y\in W$, and $x\leq_{Y} z$ where $z\in V$, then $y\leq_{Y} z$. Because $Y$ is an $N$-order-compactification, and so $\leq_{Y}$ is the closure of $\leq_{X}$, we have $\leq_{X}\cap (W\times V)\neq \emptyset$. Hence there are $y'\in W\cap X$ and $z'\in V\cap X$ such that $y'\leq_{X}z'$, a contradiction to our assumption that ${\uparrow}_{X}(W\cap X)\cap {\downarrow}_{X}(V\cap X)=\emptyset$ \color{black}. Therefore, ${\uparrow}W \cap {\downarrow}V=\varnothing$.
Since $Y$ is a Priestley space, by Fact \ref{fact: Basic facts about Priestley spaces}.\ref{eq: Closed upsets can be separated} we can find a clopen upset $U$ of $Y$ such that $W\subseteq U$ and $V\cap U = \varnothing$. Letting $K = U\cap X$ completes the proof 
%witnesses the fact 
that $\mathcal{R}_{Y}$ is an N-basis.

Conversely, suppose that $Y$ is not an N-order-compactification of $X$. Then there is $(x,y)$ in $\le_Y$ such that $(x,y)$ is not in the closure of $\le_X$. Therefore, 
%a pair $x\leq y$, and 
there are clopens $W,V$ of $Y$ such that $(x,y) \in W \times V$ but $(W\times V)\cap \le_X = \varnothing$. 
%Letting ${\uparrow}_{X}(W \cap X)$  denote the upset of $W \cap X$ in $X$ and ${\downarrow}_X(V\cap X)$ the downset of $V\cap X$ in $X$, 
%\color{red} For $W\subseteq Y$, let ${\uparrow}_{X}W={\uparrow}W\cap X$. \color{black} Then 
Thus, ${\uparrow}_X(W\cap X) \cap {\downarrow}_X(V\cap X) = \emptyset$. We show that there is no $K\in \mathcal{R}_{Y}$ such that $W\cap X\subseteq K$ and $(V\cap X)\cap K = \varnothing$. Suppose, for contradiction, %Assume towards a contradiction 
that such a $K$ exists. Since $K \in \mathcal{R}_Y$, there is a clopen upset $U$ of $Y$ such that $K = U \cap X$. Because $X$ is dense in $Y$, we have  $U=\overline{U\cap X}=\overline{K}$. This implies that $W\cap X\subseteq U$, so $W=\overline{W\cap U}\subseteq U$. Also, $(V\cap X)\cap K=\emptyset$, so $(V\cap X)\cap U=\varnothing$, and hence $V \cap U = \overline{V\cap X}\cap U = \varnothing$. 
Since $x\in U$ and $U$ is an upset of $Y$, we have $y\in U$. But $y\in V$, so $V\cap U \ne \varnothing$, a contradiction.
\end{proof}

\begin{remark}
    %The result of 
    Theorem \ref{Prop: N-order-compactifications and N-bases} is reminiscent of 
%the characterization given by Blatter 
\cite[Thm.~1.6]{BLATTER197556}, which characterizes N-order-compacti\-fi\-cations amongst all order-compactifications of $X$. %Nevertheless, the separability condition in Definition \ref{Defn: N-bases} is new as far as we are aware.
\end{remark}

\color{black}

For an order-zero-dimensional space $X$, there is clearly the largest Priestley basis of $X$, namely the basis $\mathsf{ClopUp}(X)$ of all clopen upsets of $X$. Consequently, there is the largest Priestley order-compactification of $X$, which we denote by $\eta_{0}X$. 
It can be constructed by taking the Priestley space of $\mathsf{ClopUp}(X)$. 

As the notation suggests, $\eta_{0}X$ provides a generalization of 
%It is the analogue of 
the Banaschewski compactification $\beta_0X$ of a zero-dimensional space $X$ \cite{Banaschewski1955} (see also \cite{Banaschewski1963}).
It is a classic result 
%\cite{Banaschewski1955} 
that the Banaschewski compactification of a zero-dimensional space $X$ coincides with the Stone-\v{C}ech compactification of $X$ iff $X$ is strongly zero-dimensional. 
We have a similar 
situation for when $\eta_{0}X$ coincides with $\eta X$, 
for which we need to strengthen the notion of order-zero-dimensionality:

\begin{definition}
    Let $X$ be an ordered 
    %topological 
    space.
    \begin{enumerate}
       \item  (Complete-order-separation) Let $A\subseteq X$ be an upset, $B\subseteq X$ a downset, and ${A\cap B=\emptyset}$. We say that $A$ and $B$ are \textit{completely-order-separated} if there exists a continuous order-preserving %function 
       $f:X\to [0,1]$ such that $A\subseteq f^{-1}(1)$ and $B\subseteq f^{-1}(0)$.
        
        \item (Strong Priestley separation) We say that $X$ satisfies the \textit{strong Priestley separation axiom} if whenever $A,B$ are completely-order-separated, 
        %then 
        there is a clopen upset $U$ such that $A\subseteq U$ and $B\subseteq X \,\setminus\, U$.  
        \item (Strong-order-zero-dimensionality) We call $X$  \textit{strongly-order-zero-dimensional} if it satisfies the strong Priestley separation axiom.
    \end{enumerate}
\end{definition}

%It should be noted that 
For completely-order-regular spaces, strong-order-zero-dimensionality implies order-zero-dimen\-sionality \cite[Prop.~4.2]{BezhanishviliMorandi2010}. Since every compact ordered space is completely-order-regular (see \cite[p.~46]{nachbin1965topology}),  
we arrive at the following generalization of Dwinger's result \cite[Thm.~10.2]{dwinger1961}:

\begin{proposition}
    Let $X$ be a Nachbin space. The following are equivalent.
    \begin{enumerate}
        \item $X$ is a Priestley space;
        \item $X$ is order-zero-dimensional;
        \item $X$ is strongly-order-zero-dimensional.
    \end{enumerate}
\end{proposition}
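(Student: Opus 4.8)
The plan is to prove the cycle of implications $(1)\Rightarrow(2)\Rightarrow(3)\Rightarrow(1)$, leaning on the results already quoted in the excerpt so that only the genuinely new content needs attention. The implication $(1)\Rightarrow(2)$ is essentially definitional together with Fact~\ref{fact: Basic facts about Priestley spaces}: a Priestley space satisfies the Priestley separation axiom by definition, and by Fact~\ref{fact: Basic facts about Priestley spaces}.\ref{eqref: Clopen upsets and downsets form a subbasis} the sets $U\,\setminus\,V$ with $U,V\in\mathsf{ClopUp}(X)$ form a basis for the topology. Hence both clauses of order-zero-dimensionality hold, and $X$ is order-zero-dimensional.

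For $(3)\Rightarrow(1)$ I would invoke the two facts cited just before the statement: since $X$ is a Nachbin space it is compact, and every compact ordered space is completely-order-regular (\cite[p.~46]{nachbin1965topology}); then by \cite[Prop.~4.2]{BezhanishviliMorandi2010} strong-order-zero-dimensionality implies order-zero-dimensionality for completely-order-regular spaces. So $X$ is order-zero-dimensional, and being in addition compact it is a Priestley space. Thus this direction is almost immediate once the two external citations are assembled in the right order, and it conveniently folds $(3)\Rightarrow(2)\Rightarrow(1)$ into a single step.

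The one direction that requires real work is $(2)\Rightarrow(3)$, which is where I expect the main obstacle to lie: I must show that in a \emph{compact} order-zero-dimensional space, any two completely-order-separated sets $A$ (an upset) and $B$ (a downset) can already be separated by a clopen upset. The natural strategy is to use the separating map $f\colon X\to[0,1]$ to produce disjoint closed sets whose up- and down-closures stay apart, and then apply Fact~\ref{fact: Basic facts about Priestley spaces}.\ref{eq: Closed upsets can be separated}. Concretely, I would set $F=f^{-1}(1)$ and $G=f^{-1}(0)$; these are closed (hence compact) and contain $A$ and $B$ respectively. Because $f$ is order-preserving, $F$ is an upset and $G$ is a downset, so ${\uparrow}F=F$ and ${\downarrow}G=G$, giving ${\uparrow}F\cap{\downarrow}G=F\cap G=f^{-1}(1)\cap f^{-1}(0)=\emptyset$. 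Fact~\ref{fact: Basic facts about Priestley spaces}.\ref{eq: Closed upsets can be separated} then yields a clopen upset $U$ with $F\subseteq U$ and $G\subseteq X\,\setminus\,U$, and since $A\subseteq F\subseteq U$ and $B\subseteq G\subseteq X\,\setminus\,U$ this $U$ witnesses strong Priestley separation.

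The subtle point to check carefully is exactly that $F=f^{-1}(1)$ is an upset and $G=f^{-1}(0)$ is a downset: if $x\in F$ and $x\le y$ then $1=f(x)\le f(y)\le 1$ forces $f(y)=1$, so $y\in F$, and dually for $G$. This is where order-preservation of $f$ is essential and is the heart of the argument. Once this is in place, the whole proof reduces to the three short steps above, and I would write it as a single paragraph per implication. I do not anticipate any difficulty in the compactness usage, since it enters only implicitly through Fact~\ref{fact: Basic facts about Priestley spaces}, which already presupposes the Priestley (equivalently Nachbin) setting.
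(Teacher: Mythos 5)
Your proof is correct and takes essentially the same approach as the paper: the only substantive direction is settled, in both, by feeding a closed upset/downset pair obtained from the separating function into Fact~\ref{fact: Basic facts about Priestley spaces}.\ref{eq: Closed upsets can be separated} (you apply it to $f^{-1}(1)$ and $f^{-1}(0)$; the paper applies it to $\mathrm{cl}(A)$ and $\mathrm{cl}(B)$ after noting ${\uparrow}\mathrm{cl}(A)\cap{\downarrow}\mathrm{cl}(B)=\emptyset$, which rests on the same observation). Traversing the cycle as $(1)\Rightarrow(2)\Rightarrow(3)\Rightarrow(1)$ instead of the paper's $(3)\Rightarrow(2)\Rightarrow(1)$ plus $(1)\Rightarrow(3)$ changes nothing of substance, though note that what licenses Fact~\ref{fact: Basic facts about Priestley spaces}.\ref{eq: Closed upsets can be separated} in your $(2)\Rightarrow(3)$ step is that order-zero-dimensionality plus compactness makes $X$ Priestley --- not the Nachbin property alone, as your closing parenthetical ``Priestley (equivalently Nachbin)'' might suggest.
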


\begin{proof}
    Since $X$ is a Nachbin space, we have (3)$\Rightarrow$(2)$\Rightarrow$(1). 
    %As noted above, (3) implies (2) 
    %since the space is compact ordered. 
    %and it is obvious that (2) implies (1).
    %by definition. 
    To see that (1)$\Rightarrow$(3), %implies (3), 
    let $A,B$ be completely-order-separated. 
    %note that $A\subseteq X$ is an upset and $B\subseteq X$ is a downset, and they are completely ordered separated, 
    Then ${\uparrow}{\sf cl}(A)\cap {\downarrow}{\sf cl}(B)=\emptyset$.
    Therefore, there is a clopen upset $U$ such that $A\subseteq U$ and $B\subseteq X \,\setminus\, U$ (see Fact \ref{fact: Basic facts about Priestley spaces}.\ref{eq: Closed upsets can be separated}), as desired.
\end{proof}

\begin{proposition}\label{Strongly Zero Dimensional if and only if Nachbin is Priestley}
    Let $X$ be a completely-order-regular space. The following are equivalent.
    \begin{enumerate}
        \item $X$ is strongly-order-zero-dimensional;
        \item $\eta X$ is a Priestley space;
        \item $\eta_{0}X\cong \eta X$.
    \end{enumerate}
\end{proposition}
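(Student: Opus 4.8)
The plan is to prove the chain $(1)\Rightarrow(2)\Rightarrow(3)\Rightarrow(1)$, leveraging the universal mapping property of the Nachbin compactification $\eta X$ and the results already established in the excerpt.

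For $(1)\Rightarrow(2)$, I would show that if $X$ is strongly-order-zero-dimensional, then $\eta X$ is a Priestley space. Since $\eta X$ is a Nachbin space, by the previous proposition it suffices to show $\eta X$ is strongly-order-zero-dimensional, or more directly, that it satisfies the Priestley separation axiom. Take distinct points $p \not\le q$ in $\eta X$. Using order-Hausdorffness of the Nachbin space $\eta X$, separate them by an upset neighbourhood and a downset neighbourhood, then pull these back along the universal mapping property: a continuous order-preserving $f : \eta X \to [0,1]$ separating the relevant upset and downset restricts to such a map on $X$, where strong Priestley separation yields a clopen upset. The main work is transferring the clopen upset of $X$ back up to a clopen upset of $\eta X$ separating $p$ and $q$; here I would use density of $X$ in $\eta X$ together with the correspondence between clopen upsets of $X$ and those of $\eta X$ (a clopen upset $U$ of $X$ has closure $\overline{U}$ an upset of $\eta X$, arguing as in the proof of Theorem~\ref{Prop: N-order-compactifications and N-bases}).

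For $(2)\Rightarrow(3)$, assume $\eta X$ is a Priestley space. Then $\eta X$ is itself a Priestley order-compactification of $X$, and since $\eta X$ is the largest order-compactification in $\mathcal{K}(X)$, it is in particular the largest element of the subposet $\mathcal{K}_0(X)$ of Priestley order-compactifications. But $\eta_0 X$ is by construction the largest Priestley order-compactification of $X$. By antisymmetry of the partial order on $\mathcal{K}_0(X)$, the two largest elements must coincide, giving $\eta_0 X \cong \eta X$.

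For $(3)\Rightarrow(1)$, assume $\eta_0 X \cong \eta X$. Then $\eta X$ is a Priestley space, so it is strongly-order-zero-dimensional by the previous proposition. I would then pull this property down to $X$: given completely-order-separated upset $A$ and downset $B$ in $X$, the separating map $f : X \to [0,1]$ extends to $\eta X$ by the universal mapping property, witnessing that $\overline{A}$ and $\overline{B}$ are completely-order-separated in $\eta X$; applying strong Priestley separation there produces a clopen upset of $\eta X$, whose intersection with $X$ is the desired clopen upset of $X$.

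I expect the main obstacle to be the back-and-forth transfer of clopen upsets between $X$ and $\eta X$ across the dense embedding, particularly verifying that upsets are preserved correctly and that the separating continuous maps extend and restrict compatibly. The clean bookkeeping of closures of upsets (that $\overline{A}$ remains an upset and that clopen upsets of $\eta X$ restrict to clopen upsets of $X$) is where care is needed, and the density argument already rehearsed in the proof of Theorem~\ref{Prop: N-order-compactifications and N-bases} should be the right tool.
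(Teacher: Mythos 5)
Your overall architecture differs from the paper's: the paper never proves the equivalence $(1)\Leftrightarrow(2)$ itself, but cites it (Nailana, and Bezhanishvili--Morandi), supplying only the easy steps $(2)\Rightarrow(3)$ and $(3)\Rightarrow(2)$. Your $(2)\Rightarrow(3)$ coincides with the paper's argument, and your $(3)\Rightarrow(1)$ --- passing through $(2)$ and then pulling strong Priestley separation down from $\eta X$ to $X$ --- is essentially sound, modulo one small repair: $\overline{A}$ need not be an upset of $\eta X$, so you should apply Fact~\ref{fact: Basic facts about Priestley spaces}.\ref{eq: Closed upsets can be separated} to ${\uparrow}\overline{A}$ and ${\downarrow}\overline{B}$, which are closed because $\eta X$ is a Nachbin space and disjoint because the extension $g$ of the witnessing map $f$ satisfies $g\equiv 1$ on ${\uparrow}\overline{A}$ and $g\equiv 0$ on ${\downarrow}\overline{B}$.

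The genuine gap is in $(1)\Rightarrow(2)$, exactly at the step you flag as ``the main work.'' Your proposed tool --- that the closure in $\eta X$ of a clopen upset $U$ of $X$ is an upset, ``arguing as in the proof of Theorem~\ref{Prop: N-order-compactifications and N-bases}'' --- does not do the job. That proof only shows that \emph{if} a clopen upset $K$ of $X$ is already the trace of a clopen upset $U$ of $Y$, \emph{then} $U=\overline{K}$; it never produces such a $U$, which is what you need. Moreover, the claim fails for order-compactifications in general: in Example~\ref{example: HA but not H subalgebra}, $\{0\}$ is a clopen upset of $X=\mathbb{N}$ (trivial order), yet its closure $\{0\}$ in $Y$ is not an upset of $Y$, since $0\le_Y\infty$. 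The claim \emph{is} true for $\eta X$, but the correct mechanism is the universal mapping property, not density: since $U$ is a clopen upset of $X$, its characteristic function $\chi_U:X\to[0,1]$ is continuous and order-preserving, hence extends to a continuous order-preserving $g:\eta X\to[0,1]$; by density $g$ takes values in $\{0,1\}$, so $g^{-1}(1)=\overline{U}$ is a clopen upset of $\eta X$. Two further repairs are needed to complete your sketch: order-Hausdorffness of $\eta X$ only yields separating \emph{open sets}, whereas you need a continuous order-preserving $F:\eta X\to[0,1]$ with $F(p)=1$ and $F(q)=0$, which comes from complete order-regularity of compact Nachbin spaces; and you must choose $A=\{x\in X : F(x)\ge 2/3\}$ and $B=\{x\in X : F(x)\le 1/3\}$ (completely-order-separated in $X$ by rescaling $F|_X$), so that density gives $p\in\overline{A}\subseteq\overline{U}$ and $q\in\overline{B}\subseteq \eta X\,\setminus\,\overline{U}$ for the clopen upset $U$ produced by strong Priestley separation. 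With these insertions your route works and, unlike the paper's, is self-contained; as written, however, the transfer step rests on a false general principle.
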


\begin{proof}
    (1)$\Leftrightarrow$(2) This is shown in  \cite[Thm.~2.9]{Nailana2000} (see also  \cite[Prop.~4.4]{BezhanishviliMorandi2010}).

    (2)$\Rightarrow$(3) If $\eta X$ is a Priestley order-compactification, then 
    %By Theorem \ref{Size of Priestley order-compactifications} we have that then 
    $\eta X\preceq \eta_{0}X$, and hence 
    %but certainly also 
    $\eta X\cong\eta_{0}X$.
    
    (3)$\Rightarrow$(1) If $\eta X\cong\eta_{0}X$, then clearly $\eta X$ is a Priestley space since so is $\eta_{0}X$.
\end{proof}

\section{Heyting and Esakia order-compactifications}\label{Section: Esakia order-compactifications}

In this section we develop the theory of 
%will be interested in 
those Priestley order-compactifications that result in an Esakia space. Since Esakia spaces serve as Priestley duals of Heyting algebras \cite{esakiatopologicalkripke}, such order-compactifications are of importance in the study of intuitionistic logic and related systems. 
%A key aspect of interest in such spaces concerns the fact that the relation, in addition to being closed, is \textit{continuous}:

\begin{definition} \label{def: continuously ordered}
    Let $X$ be an ordered space. We say that 
    \begin{enumerate}
        \item the order $\leq$ on $X$ is 
        %\textit{continuously ordered space}, or that $\leq$ is 
        \textit{continuous} provided
        \begin{enumerate}
        \item[(i)] ${\uparrow}x$ is closed for each $x \in X$ and (ii)\,~$
        U \mbox{ open } \Longrightarrow  {\downarrow}U \mbox{ is open};
        $ 
        \end{enumerate}
        %if whenever 
        \item $X$ is {\em continuously ordered} provided $\leq$ is a continuous order;
        \item $X$ is an \textit{Esakia space} provided $X$ is a continuously ordered Stone space.
    \end{enumerate}  
\end{definition}

Esakia spaces were introduced by Esakia \cite{esakiatopologicalkripke} under the name of ``hybrids" (of topology and order) to provide a duality theory for Heyting algebras. They have been extensively studied since and are now widely known as Esakia spaces. %\color{blue} (add some REFs). \color{black}
%\begin{remark}
    There are several equivalent characterizations for an ordered space $X$ 
    %-Hausdorff space $(X,\leq)$ 
    to be Esakia. We only list the following two and refer the interested reader to \cite[Thm.~3.1.2]{Esakiach2019HeyAlg} for more.

    \begin{theorem}
        For an ordered Stone space $X$, the following are equivalent.
        \begin{enumerate}
        \item $X$ is Esakia.
        \item $X$ is a Priestley space and $U$ open $\Longrightarrow {\downarrow}U$ is open.
        %for each $C\subseteq X$ a clopen, ${\downarrow}C$ is clopen;
        \item 
        %$X$ is a Priestley space 
        ${\downarrow}x$ is closed for each $x \in X$ and  ${\uparrow}\overline{A} = \overline{{\uparrow}A}$ for each $A \subseteq X$.
    \end{enumerate}
    \end{theorem}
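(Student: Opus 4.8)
The plan is to run the cycle (1) $\Rightarrow$ (2) $\Rightarrow$ (3) $\Rightarrow$ (1), with the bulk of the work concentrated in (1) $\Rightarrow$ (2); the other two passages should be largely formal. For (3) $\Rightarrow$ (1) I would first apply the identity ${\uparrow}\overline{A}=\overline{{\uparrow}A}$ to a singleton $A=\{x\}$: since $\overline{\{x\}}=\{x\}$, this gives ${\uparrow}x=\overline{{\uparrow}x}$, so each ${\uparrow}x$ is closed and (i) holds. For (ii), given an open $U$ I would put $E=\{x : {\uparrow}x\subseteq X\setminus U\}=X\setminus{\downarrow}U$ and show $E$ is closed: since ${\uparrow}E\subseteq X\setminus U$ we get $\overline{{\uparrow}E}\subseteq X\setminus U$, and then ${\uparrow}\overline{E}=\overline{{\uparrow}E}\subseteq X\setminus U$ forces ${\uparrow}z\subseteq X\setminus U$ for each $z\in\overline{E}$, i.e. $\overline{E}\subseteq E$; hence ${\downarrow}U$ is open. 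Together with the ambient Stone topology this is exactly the definition of an Esakia space.

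For (2) $\Rightarrow$ (3), closedness of each ${\downarrow}x$ is immediate from the first part of Fact~\ref{fact: Basic facts about Priestley spaces}. The inclusion $\overline{{\uparrow}A}\subseteq{\uparrow}\overline{A}$ also follows from that part, since ${\uparrow}\overline{A}$ is then closed and contains ${\uparrow}A$. The reverse inclusion is where the continuity condition (ii) enters: given $z\in{\uparrow}\overline{A}$, choose $w\in\overline{A}$ with $w\le z$; for an arbitrary open $O\ni z$ the set ${\downarrow}O$ is open and contains $w$, hence meets $A$ in some $a$, and any $o\in O$ with $a\le o$ lies in ${\uparrow}A\cap O$. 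Thus every neighbourhood of $z$ meets ${\uparrow}A$, so $z\in\overline{{\uparrow}A}$, giving ${\uparrow}\overline{A}=\overline{{\uparrow}A}$.

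The implication (1) $\Rightarrow$ (2) is the crux. Note first that (2) $\Rightarrow$ (1) is immediate — a Priestley space is an ordered Stone space with ${\uparrow}x$ closed, and (ii) is shared — so the entire content of (1)$\Leftrightarrow$(2) lies in deriving the Priestley separation axiom. My plan has two stages. First I would promote the continuity hypotheses to closedness of the whole order: the decisive point is that ${\uparrow}F$ is closed for every closed $F$, which is precisely upper semicontinuity of the assignment $y\mapsto{\downarrow}y$; since its values ${\downarrow}y$ are closed and $X$ is compact Hausdorff, the graph of $\le$ is then closed, so $X$ is a Nachbin space. Second, with the order closed I would separate directly: given $x\not\le y$, closedness of ${\uparrow}x$ and zero-dimensionality yield a clopen $C$ with ${\uparrow}x\subseteq C$ and $y\notin C$; then ${\downarrow}(X\setminus C)$ is closed (because $X\setminus C$ is clopen and $X$ is Nachbin) and open (by (ii)), hence clopen. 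One checks it is a downset containing $y$ and disjoint from ${\uparrow}x$, so its complement is a clopen upset containing $x$ and omitting $y$, as required.

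I expect the genuine obstacle to be the first stage: extracting closedness of the order from the defining conditions. Compactness must be used essentially here, because conditions (i) and (ii) are one-sided — pointwise closedness of principal upsets together with a lower-semicontinuity-type statement (${\downarrow}U$ open for open $U$) — whereas what is needed, namely ${\uparrow}F$ closed for closed $F$, is an upper-semicontinuity statement. Reconciling the two therefore cannot be done by formal set manipulation alone and must pass through a limit/compactness argument. Once the order is known to be closed, the clopen-downset construction above and the computations in the remaining implications are routine.
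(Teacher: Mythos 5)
Your arguments for (3)$\Rightarrow$(1) and (2)$\Rightarrow$(3) are correct and complete, and your remark that (2)$\Rightarrow$(1) is immediate is also fine. (The paper gives no proof of this theorem---it is quoted from Esakia's book---so the only thing to assess is correctness.) The problem is (1)$\Rightarrow$(2). Your ``first stage''---deducing that the order is closed, equivalently that ${\uparrow}F$ is closed for every closed $F$, from conditions (i) and (ii)---is not an argument but a declaration that some compactness argument should exist. Since you yourself identify this as the crux, and your ``second stage'' hinges on it, this is a genuine gap, not a routine omission. (Note also that your reduction of order-closedness to upper semicontinuity quietly assumes that each ${\downarrow}y$ is closed, which is part of (3), not of (1).)

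Moreover, the gap cannot be filled, because with the paper's literal definition of a continuous order the implication (1)$\Rightarrow$(2) is false. Let $X$ be the disjoint union of two convergent sequences $a_1,a_2,\ldots\to a_\infty$ and $b_1,b_2,\ldots\to b_\infty$ (a Stone space), ordered by the reflexive closure of $\{(a_n,b_n) : n\geq 1\}$. Each ${\uparrow}x$ has at most two points, so (i) holds; and for open $U$ we have ${\downarrow}U=U\cup\{a_n : b_n\in U\}$, which is open because every $a_n$ is isolated, so (ii) holds. But no clopen upset separates $a_\infty$ from $b_\infty$: such a set would contain cofinitely many $a_n$, hence (being an upset) cofinitely many $b_n$, hence (being closed) $b_\infty$. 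So $X$ satisfies (1) yet is not a Priestley space, its order is not closed (the pairs $(a_n,b_n)$ converge to $(a_\infty,b_\infty)\notin{\leq}$), and (3) fails as well (take $A=\{a_n : n\geq 1\}$). The equivalence is correct only when (ii) is taken in Esakia's original form: ${\downarrow}W$ is \emph{clopen} for every \emph{clopen} $W$---which is in fact how the paper uses ``continuously ordered'' elsewhere, e.g.\ in the proof of Theorem~\ref{prop: cont ordered}, where ${\downarrow}(E\setminus F)$ is asserted to be clopen. Under that reading, your second stage by itself is the entire proof of (1)$\Rightarrow$(2): given $x\not\leq y$, pick a clopen $C$ with ${\uparrow}x\subseteq C$ and $y\notin C$; then ${\downarrow}(X\setminus C)$ is clopen by hypothesis, and its complement is a clopen upset containing $x$ and omitting $y$. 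No detour through Nachbin spaces, and no first stage, is needed. In short, your instinct about where the difficulty sits was accurate, but what is missing there is not a compactness argument: as stated, the step you deferred is unprovable, and the fix is a strengthening of condition (ii) rather than a cleverer limit argument.
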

%\end{remark}

By Esakia duality, Heyting homomorphisms between Heyting algebras are dually characterized by those continuous order-preserving maps between their Esakia spaces that satisfy the following 
additional condition:

\begin{definition}
    An order-preserving map $f:X\to Y$ between posets is a \textit{p-morphism} if for all $x\in X$ and $y\in Y$, from $f(x)\leq y$ it follows that there is $x'\geq x$ with $f(x')=y$ (see Figure \ref{fig:pmorphismdiagram}).
\begin{figure}[h]
    \centering
\begin{tikzcd}
x' \arrow[r, "f", dashed]                   & y                       \\
x \arrow[r, "f"'] \arrow[u, "\leq", dashed] & f(x) \arrow[u, "\leq"']
\end{tikzcd}    \caption{}
    \label{fig:pmorphismdiagram}
\end{figure}
\end{definition}

\begin{remark}\label{rem: Different presentations of p-morphisms}\
    \begin{enumerate}
        \item \label{eq: different p-morphisms} It is well known (see, e.g., \cite[Prop.~1.4.12]{Esakiach2019HeyAlg}) that for an order-preserving map $f:X\to Y$, the following are equivalent:
        \begin{itemize}
        \item $f$ is a p-morphism.
        \item If $U$ is an upset of $X$, then $f[U]$ is an upset of $Y$.
            \item For each $x\in X$, ${\uparrow}f(x)\subseteq f[{\uparrow}x]$;
            \item For each $y\in Y$, $f^{-1}[{\downarrow}y]\subseteq {\downarrow}f^{-1}[y]$.
        \end{itemize}
        \item It is worth pointing out that while order-homeomorphisms are necessarily p-morphisms, not every order-embedding is a p-moprhism.
    \end{enumerate}
\end{remark}

%\color{red}
%\begin{definition}\
%A \textit{Heyting algebra} is a 
%   lattice $H$ with an additional binary operation $\to:H^2\to H$ such that, for all $a,b,c\in H$,
%   \begin{equation*}
%       a\wedge c\leq b\iff c\leq a\rightarrow b.
%   \end{equation*}
%\end{definition}

%It is well known that each Heyting algebra is a distributive lattice.

%\begin{definition}
%    A lattice homomorphism $f:H\to H$ between Heyting algebras is a \textit{Heyting homomorphism} if $f(a\rightarrow b)=f(a)\rightarrow f(b)$.
%\end{definition} 

%We have that Priestley representation specializes to Esakia representation of Heyting algebras \cite{esakiatopologicalkripke}:

%\color{black}

\begin{theorem} [Esakia duality, \cite{esakiatopologicalkripke}]
The category $\sf Esa$ of Esakia spaces and continuous p-morpshisms is dually equivalent to the category $\sf HA$ of Heyting algebras and Heyting 
%algebra 
homomorphisms. 
\end{theorem}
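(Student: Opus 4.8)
The plan is to obtain Esakia duality by restricting the Priestley duality $\mathsf{ClopUp}\colon \sf Pries \rightleftarrows \sf DLat^{\mathrm{op}} \colon \mathsf{Spec}$ recorded above. Since $\sf Esa$ and $\sf HA$ sit inside $\sf Pries$ and $\sf DLat$ as (non-full) subcategories, I would split the argument into an object-level correspondence, Esakia spaces versus Heyting algebras, and a morphism-level correspondence, continuous p-morphisms versus Heyting homomorphisms. Once both are established, the functors $\mathsf{ClopUp}$ and $\mathsf{Spec}$ restrict to $\sf Esa$ and $\sf HA^{\mathrm{op}}$, and being already quasi-inverse on $\sf Pries$ and $\sf DLat$, their restrictions remain quasi-inverse, yielding the dual equivalence.

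For the object level, I would first show that if $X$ is Esakia then $\mathsf{ClopUp}(X)$ is a Heyting algebra, with implication $U \to V := X \setminus {\downarrow}(U \setminus V)$. Here $U \setminus V$ is clopen, so ${\downarrow}(U \setminus V)$ is open by continuity of the order (Definition~\ref{def: continuously ordered}) and closed because $X$ is Nachbin (Fact~\ref{fact: Basic facts about Priestley spaces}); hence $U \to V$ is a clopen upset, and one verifies the residuation law $W \subseteq U \to V \iff W \cap U \subseteq V$ on clopen upsets $W$ (both directions follow from $W$ being an upset). Conversely, if $H$ is a Heyting algebra I would show $\mathsf{Spec}(H)$ is Esakia. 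It is already a Priestley space, hence a Stone space with ${\uparrow}P$ closed (Fact~\ref{fact: Basic facts about Priestley spaces}), so by Definition~\ref{def: continuously ordered} it remains to check that ${\downarrow}O$ is open for open $O$. On basic opens I would establish ${\downarrow}(\phi(a) \setminus \phi(b)) = \mathsf{Spec}(H) \setminus \phi(a \to b)$: the nontrivial inclusion says that $a \to b \notin P$ yields a prime filter $Q \supseteq P$ with $a \in Q$ and $b \notin Q$, which follows by applying the prime filter theorem to the filter generated by $P \cup \{a\}$ and using the residuation defining $\to$. Since the right-hand side is clopen and ${\downarrow}(-)$ commutes with unions, ${\downarrow}O$ is open for every open $O$.

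For the morphism level, the crux is: for Esakia spaces $X,Y$ and a continuous order-preserving $f\colon X \to Y$, the dual lattice homomorphism $f^{-1}\colon \mathsf{ClopUp}(Y) \to \mathsf{ClopUp}(X)$ preserves implication iff $f$ is a p-morphism. Using the implication formula, $f^{-1}(U \to V) = f^{-1}(U) \to f^{-1}(V)$ for all clopen upsets $U,V$ is equivalent to $f^{-1}({\downarrow}A) = {\downarrow}f^{-1}(A)$ for every basic clopen $A = U \setminus V$, and the inclusion $\supseteq$ is automatic from order-preservation. If $f$ is a p-morphism, the reverse inclusion is immediate: $x \in f^{-1}({\downarrow}A)$ means $f(x) \le a$ for some $a \in A$, whence there is $x' \ge x$ with $f(x') = a \in A$, so $x \in {\downarrow}f^{-1}(A)$; thus $f^{-1}$ preserves implication.

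The converse is the step I expect to be the main obstacle, and I would argue by contraposition. If $f$ is not a p-morphism, choose $x \in X$ and $y \in Y$ with $f(x) \le y$ but $y \notin f[{\uparrow}x]$. Since ${\uparrow}x$ is closed and $f$ is continuous, $f[{\uparrow}x]$ is compact, hence closed, so $Y \setminus f[{\uparrow}x]$ is an open set containing $y$; by Fact~\ref{fact: Basic facts about Priestley spaces}.\ref{eqref: Clopen upsets and downsets form a subbasis} there is a basic clopen $A = U \setminus V$ with $y \in A$ and $A \cap f[{\uparrow}x] = \emptyset$. Then $f(x) \le y \in A$ gives $x \in f^{-1}({\downarrow}A)$, while $f[{\uparrow}x] \cap A = \emptyset$ gives $x \notin {\downarrow}f^{-1}(A)$, so $f^{-1}({\downarrow}A) \ne {\downarrow}f^{-1}(A)$ and $f^{-1}$ does not preserve implication. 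This completes the key lemma; combined with the object-level correspondence and the fact that implication in a Heyting algebra is uniquely determined by the lattice order, the restricted functors form the desired dual equivalence $\sf Esa \simeq \sf HA^{\mathrm{op}}$.
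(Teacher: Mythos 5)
The paper does not prove this theorem at all---it is stated as a classical result with a citation to Esakia's work---but your argument is correct and follows the standard route, namely restricting Priestley duality, which is exactly how the paper itself frames Esakia duality (``a restricted version of Priestley duality''). Your two key lemmas---that $U \to V = X \setminus {\downarrow}(U \setminus V)$ makes $\mathsf{ClopUp}(X)$ a Heyting algebra, and that a continuous order-preserving $f$ is a p-morphism iff $f^{-1}$ preserves this implication---are precisely the statements the paper records as Fact~\ref{fact: Condition for being a continuous p-morphism}.\ref{eqref: formula for implication} and Fact~\ref{fact: Condition for being a continuous p-morphism}.\ref{eqref: Cont p-morphisms}, so your proposal in effect supplies the proofs of the facts the paper takes as known.
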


Esakia duality is a restricted version of Priestley duality (note that neither $\sf Esa$ is a full subcategory of $\sf Pries$ nor $\sf HA$ is a full subcategory of $\sf DLat$). We will freely use the following well-known facts about Esakia spaces (see, e.g., 
%\cite[Lem.~3.3.13]{Esakiach2019HeyAlg}):
\cite{Esakiach2019HeyAlg}):

\begin{fact}\ \label{fact: Condition for being a continuous p-morphism} 
\begin{enumerate}
   \item \label{eqref: Each closed upset is an Esakia space} Each closed upset of an Esakia space is an Esakia space.
   \item \label{eqref: formula for implication} If $X$ is an Esakia space, then the lattice $\mathsf{ClopUp}(X)$ is a Heyting algebra where the Heyting implication is calculated by 
\begin{equation*}
    U\rightarrow_{\mathsf{ClopUp}(X)} V \coloneqq X\,\setminus\,{\downarrow}(U\,\setminus\,V)=\{x\in X : {\uparrow}x \cap U \subseteq V \}
\end{equation*}
for each $U,V\in\mathsf{ClopUp}(X)$.
\item \label{eqref: Cont p-morphisms} Let $X,Y$ be continuously ordered spaces, so that $\mathsf{ClopUp}(X)$ and $\mathsf{ClopUp}(Y)$ are Heyting algebras. Then a continuous order-preserving map $f:X\to Y$ is a p-morphism iff ${f^{-1}:\mathsf{ClopUp}(Y) \to \mathsf{ClopUp}(X)}$ is a Heyting %algebra 
homomorphism, which amounts to $$f^{-1}{\downarrow}(U\,\setminus\,V)\subseteq {\downarrow}f^{-1}[U\,\setminus\,V]$$ 
%if and only if 
for each $U,V\in\mathsf{ClopUp}(Y)$.
   \item \label{eqref: Heyting algebra one to one if p-morphism onto} A Heyting %algebra 
   homomorphism $f:H\to H'$ is one-to-one iff $f^{-1}:\mathsf{Spec}(H')\to\mathsf{Spec}(H)$ is onto.
\end{enumerate}
\end{fact}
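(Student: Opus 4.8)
The statement collects four standard facts, and I would prove each separately, relying throughout on Priestley duality and the basis of convex clopen sets $U\setminus V$. For part (1), the cleanest route is the characterization of Esakia spaces by the two conditions ``${\downarrow}x$ closed for each $x$'' and ``${\uparrow}\overline{A}=\overline{{\uparrow}A}$ for each $A$''. Let $F$ be a closed upset of an Esakia space $X$. As a closed subspace of a Stone space, $F$ is itself an ordered Stone space, so it suffices to verify these two conditions in $F$. The point is that, since $F$ is an upset, ${\uparrow}_F A={\uparrow}_X A$ for every $A\subseteq F$, and since $F$ is closed, closures computed in $F$ agree with those in $X$ for subsets of $F$. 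Both conditions then reduce to the corresponding statements in $X$: ${\downarrow}_F x=({\downarrow}_X x)\cap F$ is closed, and ${\uparrow}_F\overline{A}={\uparrow}_X\overline{A}=\overline{{\uparrow}_X A}=\overline{{\uparrow}_F A}$.

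For part (2), I would first check that $W_0:=X\setminus{\downarrow}(U\setminus V)$ is a clopen upset: the set $U\setminus V$ is clopen, whence ${\downarrow}(U\setminus V)$ is open by continuity of the order (Definition \ref{def: continuously ordered}) and closed by Fact \ref{fact: Basic facts about Priestley spaces}, so its complement is a clopen upset. The identity $W_0=\{x:{\uparrow}x\cap U\subseteq V\}$ is immediate from the definition of ${\downarrow}(U\setminus V)$. To see that $W_0$ is the Heyting implication I would verify that it is the largest clopen upset $W$ with $U\cap W\subseteq V$: the inclusion $U\cap W_0\subseteq V$ is read off the second description, and maximality holds because any clopen upset $W$ meeting ${\downarrow}(U\setminus V)$ would, by upward closure, carry an element of $U\setminus V$ into $U\cap W\subseteq V$, a contradiction.

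For part (3), since $f$ is continuous and order-preserving, $f^{-1}$ is automatically a bounded lattice homomorphism, so the only issue is preservation of $\to$. Substituting the formula from (2) and using that $f^{-1}$ commutes with $\setminus$, I would reduce ``$f^{-1}$ preserves implication'' to the equality $f^{-1}{\downarrow}(U\setminus V)={\downarrow}f^{-1}[U\setminus V]$; the inclusion $\supseteq$ holds for every order-preserving map, so the genuine content is exactly the displayed inclusion. That a p-morphism satisfies it follows directly from the definition. For the converse I would argue by contraposition: if $f$ is not a p-morphism, then by Remark \ref{rem: Different presentations of p-morphisms} there are $x$ and $y$ with $f(x)\le y$ but $y\notin f[{\uparrow}x]$; since ${\uparrow}x$ is closed, hence compact, $f[{\uparrow}x]$ is closed, so using that $Y$ is a Priestley space I can separate $y$ from $f[{\uparrow}x]$ by a basic set $U\setminus V$ (Fact \ref{fact: Basic facts about Priestley spaces}.\ref{eqref: Clopen upsets and downsets form a subbasis}) with $y\in U\setminus V$ and $f[{\uparrow}x]\cap(U\setminus V)=\emptyset$; this pair witnesses failure of the inclusion at $x$.

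Part (4) is the general duality principle that injections dualize to surjections. If $f^{-1}$ is onto and $f(a)=f(b)$ with, say, $a\not\le b$, I would take a prime filter $P$ of $H$ with $a\in P$ and $b\notin P$, lift it to $Q\in\mathsf{Spec}(H')$ with $f^{-1}(Q)=P$, and obtain $f(a)\in Q$ while $f(b)\notin Q$, contradicting $f(a)=f(b)$. Conversely, if $f$ is injective, hence an order-embedding, and $P\in\mathsf{Spec}(H)$, I would build $Q$ via the prime filter theorem, checking that the filter generated by $f[P]$ is disjoint from the ideal generated by $f[H\setminus P]$; the resulting separating prime filter satisfies $f^{-1}(Q)=P$. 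I expect the two load-bearing steps to be the topological separation in (3) --- arguing that $f[{\uparrow}x]$ is closed and excising it with a single basic convex set --- and the disjointness check in the injective-implies-surjective direction of (4), where primeness of $H\setminus P$ and the order-reflecting property of an injective lattice homomorphism are precisely what make it work; the rest is routine unwinding of definitions.
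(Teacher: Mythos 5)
Your treatments of parts (1), (2), and (4) are correct, and the paper itself offers no proof to compare against (the Fact is quoted as known from the literature), so correctness is the only issue. The subspace argument via the characterization ``${\downarrow}x$ closed and ${\uparrow}\overline{A}=\overline{{\uparrow}A}$'' works because $F$ being a closed upset makes both ${\uparrow}$ and closure agree with those of $X$; the identification of $X\setminus{\downarrow}(U\setminus V)$ as the largest clopen upset $W$ with $U\cap W\subseteq V$ is exactly right; and the prime-filter-theorem argument for (4), resting on order-reflection of injective lattice homomorphisms, is the standard one. In (3), your reduction of ``$f^{-1}$ is a Heyting homomorphism'' to the displayed inclusion, and the forward direction (p-morphism implies the inclusion), are also correct and valid in the stated generality.

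The gap is in the converse direction of (3). The Fact is stated for \emph{continuously ordered} spaces, but your argument invokes ``${\uparrow}x$ is closed, hence compact'' and ``$Y$ is a Priestley space''---both presuppose compactness, which is not among the hypotheses. This is not a repairable omission: the converse implication is actually false in the stated generality. Let $Y=\{b\}\cup\mathbb{N}\cup\{\infty\}$, where $\mathbb{N}\cup\{\infty\}$ is the one-point compactification of the discrete antichain $\mathbb{N}$ and $b$ is an isolated bottom element below every point; this is an Esakia space. Let $X=\{*\}\cup\mathbb{N}$ be discrete, ordered by putting $*$ below every element of the antichain $\mathbb{N}$; then $X$ is continuously ordered (indeed E-order-zero-dimensional) and $\mathsf{ClopUp}(X)=\mathsf{Up}(X)$ is a Heyting algebra. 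Let $f$ fix $\mathbb{N}$ pointwise and send $*$ to $b$. Then $f$ is continuous and order-preserving but not a p-morphism, since $f(*)=b\le\infty$ while $\infty\notin f[{\uparrow}*]$. Yet the inclusion $f^{-1}{\downarrow}(U\setminus V)\subseteq{\downarrow}f^{-1}[U\setminus V]$ holds for all $U,V\in\mathsf{ClopUp}(Y)$: a failure could only occur at $*$, and it would force $U\setminus V=\{\infty\}$, which is impossible because every clopen subset of $Y$ containing $\infty$ is cofinite while every clopen subset missing $\infty$ is finite. So $f^{-1}$ is a Heyting homomorphism although $f$ is not a p-morphism. (This is precisely the phenomenon motivating Section \ref{sec: preEsakia}: for a non-image-compact E-order-zero-dimensional $X$, the embedding $X\hookrightarrow\eta_0X$ dualizes to a Heyting homomorphism without being a p-morphism.) Your proof, and the ``iff,'' are correct once one assumes, say, that $X$ is image-compact in the sense of Definition \ref{def: locally Esakia} and $Y$ is a Priestley space---in particular for Esakia spaces, which is the only setting where the paper uses this direction (Proposition \ref{prop: restriction of the order}); in the non-compact application (Proposition \ref{Liftings are p-morphisms}) only the forward direction is needed, and that part of your argument is valid in general. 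You should therefore either add the compactness hypothesis explicitly to the converse of (3), or record that only the forward implication survives for arbitrary continuously ordered spaces.
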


%We can distinguishing the two by calling the weaker notion a Heyting ring (a subring of the upsets that happens to be a Heyting algebra) and Esakia ring (a Heyting subalgebra of the upsets). The two order-compactifications can also be termed Heyting order-compactification and Esakia order-compactification.
%\color{black}

Let $X$ be order-zero-dimensional, $Y$ a Priestley order-compactification of $X$, and $\mathcal{R}_Y$ the corresponding Priestley basis of $X$. By Esakia duality, $Y$ is an Esakia space iff $\mathcal{R}_Y$ is a Heyting algebra. However, $\mathcal{R}_Y$ may not be a Heyting subalgebra of $\mathsf{ClopUp}(X)$, as we next see.  

\begin{example} \label{example: HA but not H subalgebra}
    The set $X$ of natural numbers with the trivial order and discrete topology is clearly order-zero-dimensional. We let $Y := X \cup\{\infty\}$ be the one-point compactification of $X$, and we order $Y$ by setting $x\leq y$ iff $x=y$ or $y=\infty$ (see Figure \ref{fig:Heytingalgebranotsubalgebra}).

\begin{figure}[h]
    \centering
\begin{tikzpicture}
    \node at (0,0) {$\bullet$};
    \node at (0,-0.5) {$0$};
     \node at (1,0) {$\bullet$};
         \node at (1,-0.5) {$1$};

      \node at (2,0) {$\bullet$};
               \node at (2,-0.5) {$2$};

       \node at (3,0) {$\bullet$};
            \node at (3,-0.5) {$3$};

        \node at (4,0) {$\dots$};

        \node at (2,1) {$\bullet$};
        \node at (2,1.3) {$\infty$};
        \node at (2,-1.2) {$Y$};

        \draw (0,0) -- (2,1) -- (1,0) -- (2,1) -- (2,0) -- (2,1) -- (3,0);
\end{tikzpicture}
\caption{}
\label{fig:Heytingalgebranotsubalgebra}
\end{figure}
    
    It is well known (see, e.g., \cite[Thm.~3.2.4]{Esakiach2019HeyAlg}) that  $Y$ is an Esakia space, so $\mathcal{R}_Y$ is a Heyting algebra. However, $\mathsf{ClopUp}(X)$ is the powerset $\wp(X)$, and clearly $\mathcal{R}_Y$ is not a Heyting subalgebra of $\wp(X)$ (for example, if $U = X \,\setminus\, \{0\}$, then $U \to_{\mathcal{R}_Y} \varnothing = \varnothing$ while $U \to_{\wp(X)} \varnothing = \{0\}$). 
\end{example}

This gives rise to the following two notions of a ring of upsets of a poset.

%We likewise have a notion of a ring of sets corresponding to Heyting algebras. Here we notice a key difference in the theory:

\begin{definition}
We call a Priestley ring $\mathcal{R}$ of a poset $X$ 
\begin{enumerate}
        \item a \textit{Heyting ring} provided $\mathcal{R}$ is a Heyting algebra; 
        \item an {\em Esakia ring} provided $\mathcal{R}$ is a Heyting subalgebra of $\mathsf{Up}(X)$.
    \end{enumerate} 
\end{definition}

This, in turn, gives rise to the following two notions of bases of an order-zero-dimensional space.

\begin{definition}
We call a Priestley basis $\mathcal{R}$ of an order-zero-dimensional space $X$ 
    \begin{enumerate}
        \item a {\em Heyting basis} provided $\mathcal{R}$ is a Heyting ring;
        \item an {\em Esakia basis} provided $\mathcal{R}$ is an Esakia ring.
    \end{enumerate}
\end{definition}

Let $(\mathfrak{EB}_{H}(X),\subseteq)$ be the poset of Heyting bases and $(\mathfrak{EB}_{E}(X),\subseteq)$ the poset of Esakia bases of~$X$.
Each of these 
%above two bases 
give rise to special Priestley order-compactifications we describe next.

\begin{definition}\label{def: Heyting and Esakia compactifications}
    Let $X$ be order-zero-dimensional. We call a Priestley order-compactification $Y$ of $X$ 
    \begin{enumerate}
        \item a \textit{Heyting order-compactification} if $Y$ is an Esakia space; 
        \item an {\em Esakia order-compactification} if it is a Heyting order-compactification and ${\uparrow}_{X}x$ is dense in ${\uparrow}_{Y}x$ for each $x\in X$.
    \end{enumerate}
\end{definition}

\begin{theorem}\label{Theorem: Heyting and Esakia compactifications characterization}
    Let $X$ be order-zero-dimensional. A Priestley order-compactification $Y$ of $X$ is 
    %space $X$, we have:
    \begin{enumerate}
        \item \label{eq: Heyting comp and Heyting basis} a Heyting order-compactification iff $\mathcal{R}_Y$ is a Heyting basis;
        \item \label{eqref: Priestley compactification and Esakia bases} an Esakia order-compactification iff $\mathcal{R}_Y$ is an Esakia basis.
    \end{enumerate}
\end{theorem}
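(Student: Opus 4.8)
The plan is to prove both parts by translating the topological and order conditions on $Y$ into lattice-theoretic conditions on $\mathcal{R}_Y$ via the Priestley-basis isomorphism of Theorem~\ref{thm: Isomorphism between Priestley bases and Priestley compactifications}, while identifying $X$ with a dense subspace of $Y$ (Remark~\ref{rem: identification}). The workhorse throughout is the observation that $U\mapsto e^{-1}(U)=U\cap X$ is a bounded-lattice isomorphism $\mathsf{ClopUp}(Y)\to\mathcal{R}_Y$: it is clearly a lattice homomorphism, it is onto by the definition of $\mathcal{R}_Y$, and it is injective because $X$ is dense in $Y$.

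For part (1) I would invoke the object part of Esakia duality: a Priestley space $Y$ is an Esakia space iff $\mathsf{ClopUp}(Y)$ is a Heyting algebra (one direction is Fact~\ref{fact: Condition for being a continuous p-morphism}.\ref{eqref: formula for implication}, the other follows since $Y\cong\mathsf{Spec}(\mathsf{ClopUp}(Y))$ and the spectrum of a Heyting algebra is Esakia). Since $\mathcal{R}_Y\cong\mathsf{ClopUp}(Y)$ as lattices and being a Heyting algebra is preserved by lattice isomorphism, $\mathcal{R}_Y$ is a Heyting algebra iff $Y$ is Esakia, i.e.\ iff $Y$ is a Heyting order-compactification. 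As $\mathcal{R}_Y$ is automatically a Priestley basis, ``being a Heyting algebra'' is precisely ``being a Heyting ring,'' hence ``being a Heyting basis,'' which settles (1).

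For part (2), note that an Esakia basis is in particular a Heyting basis, and an Esakia order-compactification is in particular a Heyting order-compactification; so, using (1), the problem reduces to the following equivalence, to be proved under the standing hypothesis that $Y$ is Esakia (equivalently $\mathcal{R}_Y$ is a Heyting basis):
\[
\mathcal{R}_Y \mbox{ is a Heyting subalgebra of } \mathsf{Up}(X)
\iff
{\uparrow}_X x \mbox{ is dense in } {\uparrow}_Y x \mbox{ for each } x\in X.
\]
To make this tractable I would compute the two relevant implications. For $A=U\cap X$ and $B=V\cap X$ with $U,V\in\mathsf{ClopUp}(Y)$, transporting the formula of Fact~\ref{fact: Condition for being a continuous p-morphism}.\ref{eqref: formula for implication} along the isomorphism gives the intrinsic implication $A\to_{\mathcal{R}_Y}B=\{x\in X:{\uparrow}_Y x\cap U\subseteq V\}$, whereas the ambient implication is $A\to_{\mathsf{Up}(X)}B=\{x\in X:{\uparrow}_Y x\cap U\cap X\subseteq V\}$ (using ${\uparrow}_X x={\uparrow}_Y x\cap X$). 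Because $\mathcal{R}_Y$ is a sublattice of $\mathsf{Up}(X)$ one always has $A\to_{\mathcal{R}_Y}B\subseteq A\to_{\mathsf{Up}(X)}B$, with equality for all $A,B$ exactly when $\mathcal{R}_Y$ is a Heyting subalgebra. Thus the subalgebra condition is precisely that, for all clopen upsets $U,V$ of $Y$ and all $x\in X$, having ${\uparrow}_Y x\cap U\cap X\subseteq V$ forces ${\uparrow}_Y x\cap U\subseteq V$.

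With this reformulation the forward direction is immediate: assuming density, if some $w\in{\uparrow}_Y x\cap U$ had $w\notin V$, then $w$ would lie in the nonempty relatively open set ${\uparrow}_Y x\cap(U\setminus V)$, which by density meets ${\uparrow}_X x$, producing a point of ${\uparrow}_Y x\cap U\cap X$ outside $V$, a contradiction. The converse is the main obstacle, since it requires manufacturing witnessing clopen upsets from a failure of density. If ${\uparrow}_X x$ is not dense in ${\uparrow}_Y x$, I would pick $w\in{\uparrow}_Y x$ outside $F:=\overline{{\uparrow}_X x}$; as $F$ is closed and $Y$ is order-zero-dimensional, Fact~\ref{fact: Basic facts about Priestley spaces}.\ref{eqref: Clopen upsets and downsets form a subbasis} yields clopen upsets $U,V$ with $w\in U\setminus V$ and $(U\setminus V)\cap F=\emptyset$. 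Then every $z\in{\uparrow}_Y x\cap U\cap X\subseteq{\uparrow}_X x\subseteq F$ must lie in $V$ (otherwise $z\in(U\setminus V)\cap F$), so ${\uparrow}_Y x\cap U\cap X\subseteq V$, while $w$ witnesses ${\uparrow}_Y x\cap U\not\subseteq V$; hence $A\to_{\mathsf{Up}(X)}B\neq A\to_{\mathcal{R}_Y}B$ and $\mathcal{R}_Y$ fails to be a Heyting subalgebra. Combining the two directions with (1) gives (2). The one genuinely delicate point is this separation step: order-zero-dimensionality of $Y$ (the $U\setminus V$ basis) is exactly what allows a single basic clopen to contain $w$ and simultaneously avoid the closure of ${\uparrow}_X x$.
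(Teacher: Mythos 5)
Your proposal is correct, and much of it coincides with the paper's proof: part (1) is argued exactly as in the paper (Esakia duality plus the lattice isomorphism $\mathcal{R}_Y\cong\mathsf{ClopUp}(Y)$), and your ``density implies subalgebra'' half of part (2) is the paper's computation $E\rightarrow_{\mathsf{Up}(X)}F=(U\rightarrow_{\mathsf{ClopUp}(Y)}V)\cap X$ in only slightly different clothing. The genuine divergence is in the direction ``Esakia basis implies density.'' The paper works inside $\mathsf{Spec}(\mathcal{R}_Y)$: it first proves Claim~\ref{Claim: Identity of elements in Esakia basis}, namely $\phi(E\rightarrow_{\mathcal{R}_Y}F)=\phi(E)\rightarrow_{\mathsf{ClopUp}(Y)}\phi(F)$, by a finite-intersection-property and compactness argument about prime filters, and then reads off density: a basic clopen $\phi(E)\,\setminus\,\phi(F)$ meeting ${\uparrow}_Y e(x)$ forces $x\notin E\rightarrow_{\mathcal{R}_Y}F$, hence yields a point of ${\uparrow}_X x$ in $E\,\setminus\,F$. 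You instead prove the contrapositive: if density fails at $x$, you pick $w\in{\uparrow}_Y x$ outside $\overline{{\uparrow}_X x}$ and use the clopen-difference basis of the Priestley space $Y$ (Fact~\ref{fact: Basic facts about Priestley spaces}.\ref{eqref: Clopen upsets and downsets form a subbasis}) to produce clopen upsets $U,V$ with $w\in U\,\setminus\,V$ and $(U\,\setminus\,V)\cap\overline{{\uparrow}_X x}=\emptyset$, which witness that $\rightarrow_{\mathcal{R}_Y}$ and $\rightarrow_{\mathsf{Up}(X)}$ disagree. Your route is more elementary---no prime filters, no explicit compactness---and its reformulation of the Esakia-basis condition as ``internal and ambient implications coincide'' makes both halves transparent; what it does not produce is the paper's Claim as a standalone identity, which the paper reuses later (in the proof of Proposition~\ref{Liftings are p-morphisms}), so the compactness detour pays off downstream. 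Two points are worth spelling out if you write this up: (i) your reduction to a single equivalence under the standing hypothesis that $Y$ is Esakia is sound precisely because an Esakia basis is in particular a Heyting basis and an Esakia order-compactification is in particular a Heyting order-compactification, so (1) applies first in either direction; (ii) the step ``equality of implications for all pairs iff Heyting subalgebra'' silently invokes uniqueness of residuals in a lattice, which is standard but deserves a sentence.
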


\begin{proof}
    \eqref{eq: Heyting comp and Heyting basis} By Esakia duality, $Y$ is a Heyting order-compactification iff $\mathsf{ClopUp}(Y)$ is a Heyting algebra. Since $\mathcal{R}_{Y}$ is isomorphic to $\mathsf{ClopUp}(Y)$, 
%the latter 
this is equivalent to $\mathcal{R}_{Y}$ being a Heyting basis. 
%Conversely, if $\mathcal{R}_Y$ is a Heyting basis, then $\mathcal{R}_Y$ is a Heyting algebra. Therefore, $\mathsf{ClopUp}(Y)$ is a Heyting algebra, and hence $Y$ is an Esakia space. Thus, $Y$ a Heyting order-compactification.

  \eqref{eqref: Priestley compactification and Esakia bases} 
%To establish this, 
%We first show that 
%a space 
%$Y$ is an Esakia order-compactification of $X$ %if and only if 
%iff $\mathcal{R}_{Y}$ is an Esakia basis. \color{blue} This result might deserve to stand on its own. \color{black}
%\rodrigonote{In the previous round we had this result as its own theorem. Shall we agree to isolate it? If so, where shall we put it? And in that case, the proof of Theorem \ref{Theorem: Isomorphism between Esakia compactifications and Esakia bases} will be quite short, but hopefully that is alright.} 
First suppose that $Y$ is an Esakia order-compactification of $X$. Let $E,F \in \mathcal{R}_Y$. Then there are $U,V\in\mathsf{ClopUp}(Y)$ such that $E=U\cap X$ and $F=V\cap X$. 
%It sufficies to 
We show that 
\begin{equation*}
    E\rightarrow_{\mathsf{Up}(X)} F =(U\rightarrow_{\mathsf{ClopUp}(Y)} V)\cap X.
\end{equation*}
%$\leq_{X}=(X\times X)\cap \leq$. Indeed, we need to show that if $U,V$ are clopen upsets in $Y$, and consequently, $U\cap X,V\cap X\in R_{Y}$, then $(U\cap X)\rightarrow_{\mathsf{ClopUp}(X)} (V\cap X)\in R_{Y}$. For that we show:
%\begin{equation*}
%    (U\cap X)\rightarrow_{\mathsf{ClopUp}(X)} (V\cap X))=(U\rightarrow_{R_{Y}} V)\cap X;
%\end{equation*}
One inclusion is immediate: if $x\in (U\rightarrow_{\mathsf{ClopUp}(Y)} V)\cap X$, then $x\in X$ and ${\uparrow}_Y x \cap U \subseteq V$, so ${\uparrow}_X x \cap E \subseteq F$, and hence $x\in E\rightarrow_{\mathsf{Up}(X)} F$. For the other inclusion, suppose that $x\in X$, but $x\notin U\rightarrow_{\mathsf{ClopUp}(Y)} V$. Then there is $y\geq_Y x$ such that $y\in U\,\setminus\,V$. Since $Y$ is an Esakia order-compactification of $X$, the set ${\uparrow}_Y x\cap X$ is dense in ${\uparrow}_Y x$. Therefore, there is $y'\in{\uparrow}_Y x\cap X$
%$y'\geq x$ 
with $y'\in U\,\setminus\,V$. Thus, $x\notin (U\cap X)\rightarrow_{\mathsf{Up}(X)} (V\cap X)$. This shows that $\mathcal{R}_{Y}$ is a Heyting subalgebra of $\mathsf{Up}(X)$, and hence $\mathcal{R}_{Y}$ is an Esakia basis of $X$.

Conversely, suppose that $\mathcal{R}_Y$ is an Esakia basis of $X$. Recalling that 
%we denote by 
$\phi(A)=\{x\in Y: A\in x\}$ for each $A\in\mathcal{R}_Y$, we have:
%. We first show the following:

\begin{claim}\label{Claim: Identity of elements in Esakia basis}
%\begin{equation*}
$\phi(E\rightarrow_{\mathcal{R}_Y}F)=\phi(E)\rightarrow_{\mathsf{ClopUp}(Y)}\phi(F)$
%\end{equation*}
for each $E,F\in \mathcal{R}_Y$.
\end{claim}
\begin{proof}
    The inclusion $\phi(E\rightarrow_{\mathcal{R}_Y}F)\subseteq \phi(E)\rightarrow_{\mathsf{ClopUp}(Y)}\phi(F)$ is clear. For the other inclusion, suppose that $y\notin \phi(E\rightarrow_{\mathcal{R}_Y}F)$.
    %, so $U\rightarrow_{\mathcal{R}}V\notin y$. 
    %for $x\in Y$; then we want to show that:
    Consider the family
\begin{equation*}
    \mathcal{K}=\{ \phi(A) : A \in \mathcal{R}_Y, y\in \phi(A)\}\cup \{\phi(E)\,\setminus\,\phi(F)\}.
\end{equation*}
If $\mathcal{K}$ does not have the finite intersection property, then there is $A\in \mathcal{R}_Y$ with $y\in\phi(A)$ and $\phi(A)\cap \phi(E) \subseteq \phi(F)$, so $\phi(A\cap E)\subseteq\phi(F)$, and hence $A\cap E\subseteq F$. Since $\mathcal{R}_Y$ is an Esakia basis, $A\subseteq 
%X\,\setminus\,{\downarrow}(U\,\setminus\,V)=
E\rightarrow_{\mathcal{R}_Y}F$, so 
%(where the equality follows from $\mathcal{R}$ being an Esakia basis); since 
$y\in \phi(A)$ implies that $y\in \phi(E\rightarrow_{\mathcal{R}_Y}F)$, a contradiction. Thus, $\mathcal{K}$ has the finite intersection property.
%; By Theorem \ref{thm: Isomorphism between Priestley bases and Priestley compactifications}, $Y=\mathsf{Spec}(\mathcal{R}_{Y})$, so by 
Since $Y$ is compact, $\bigcap\{\varphi(A) : A\in\mathcal{R}_Y, y\in \varphi(A) \} \cap (\phi(E)\,\setminus\,\phi(F)) %\varphi(U)\setminus\varphi(V)
\neq \emptyset$. For each $z$ in the intersection, we have $y\leq z$
%, since $\mathcal{R}$ is a Priestley ring, 
and $z\in \varphi(E)\,\setminus\, \varphi(F)$, so ${\uparrow}y\cap\phi(E)\not\subseteq\phi(F)$, and hence $y\notin \phi(E)\rightarrow_{\mathsf{ClopUp}(Y)}\phi(F)$.
\end{proof}

Since $\mathcal{R}_Y$ is a Heyting algebra, $Y$ is an Esakia space. Therefore, to prove that 
%We now show that 
$Y$
%$(\mathsf{Spec}(\mathcal{R}),e)$ 
is an Esakia order-compactification of $X$, it  
it is sufficient to show that ${\uparrow}_Xx$
%${\uparrow}x\cap X$ 
is dense in ${\uparrow}_{Y}x$ for each $x\in X$. %Note that 
Under our identification of $X$ with its image in $Y$, we identify $x\in X$ with $e(x)=\{A\in \mathcal{R}_Y : x\in A\}$. Thus, 
%so formally we will verify 
we must verify that ${\uparrow}_Ye(x)\cap e[X]$ is dense in ${\uparrow}_{Y}e(x)$. 
%Indeed, 
Let ${\uparrow}_{Y}e(x)\cap W\ne\varnothing$
for some clopen $W\subseteq Y$. 
%Since $W$ is a finite union of the clopen upsets of the form $\phi(U)\,\setminus\,\phi(V)$, 
By Fact~\ref{fact: Basic facts about Priestley spaces}.\ref{eqref: Clopen upsets and downsets form a subbasis}, there are $E,F \in \mathcal{R}_Y$ such that  
${\uparrow}_{Y}e(x)\cap \left(\phi(E)\,\setminus\,\phi(F)\right)\neq \emptyset$. Therefore, ${\uparrow}_{Y}e(x)\cap \phi(E) \not\subseteq \phi(F)$, and hence $e(x)\notin 
%\mathsf{Spec}(\mathcal{R})\,\setminus\,{\downarrow}(\phi(U)-\phi(V))=
\phi(E)\rightarrow_{\mathsf{ClopUp}(Y)} \phi(F)$. By Claim~\ref{Claim: Identity of elements in Esakia basis}, $\phi(E)\rightarrow_{\mathsf{ClopUp}(Y)} \phi(F)=\phi(E\rightarrow_{\mathcal{R}_Y} F)$, and so $e(x)\notin \phi(E\rightarrow_{\mathcal{R}} F)$, which by definition 
implies that $x\notin E\rightarrow_{\mathcal{R}_{Y}}F$, so ${\uparrow}_Xx\cap E\not\subseteq F$. Therefore,
%From this we conclude that 
there is $y\in X$ such that $x\leq_X y$ and $y\in E\,\setminus\,F$, so $e(y)\in \varphi(E)\,\setminus\,\varphi(F)$, and hence ${\uparrow}_Ye(x)\cap e[X]\cap W\neq \emptyset$. Consequently, $Y$
%$(\mathsf{Spec}(\mathcal{R}),e)$ 
is an Esakia order-compactification of $X$.  
\end{proof}

Given two Heyting order-compactifications $(Y_{1},e_{1})$ and $(Y_{2},e_{2})$, define
        $(Y_{1},e_{1})\preceq_H (Y_{2},e_{2})$
    provided there is a continuous order-preserving map $f:Y_{2}\to Y_{1}$ such that $f \circ e_{2} = e_{1}$.
As with order-compactifications, $(Y_{1},e_{1})\preceq_H (Y_{2},e_{2})$ and $(Y_{2},e_{2})\preceq_H (Y_{1},e_{1})$ iff there is an order-homeomorphism $f:Y_1\to Y_2$ such that $f \circ e_{1} = e_{2}$, and this defines a partial order on the collection of equivalence classes of Heyting order-compactifications. We denote this poset by $(\mathcal{K}_{H}(X),\preceq_H)$, and note that it is a subposet of the poset $(\mathcal{K}_{0}(X),\preceq)$ of Priestley order-compactifications of $X$.

We also let $(\mathcal{K}_{E}(X),\preceq_E)$ be the subposet of $(\mathcal{K}_{H}(X),\preceq_H)$ consisting of Esakia order-compacti\-fications. In other words, $\preceq_E$ is the restriction of $\preceq_H$ to $\mathcal{K}_{E}(X)$. Interestingly, the order $\preceq_E$ is determined by continuous p-moprhisms between Esakia order-compactifications: 

\begin{proposition} \label{prop: restriction of the order}
    Let $(Y_{1},e_{1})$ and $(Y_{2},e_{2})$ be two Esakia order-compactifications of an order-zero-dimensional space $X$. Then $(Y_{1},e_{1})\preceq_E (Y_{2},e_{2})$ iff there is a continuous p-morphism $f:Y_{2}\to Y_{1}$ such that $f \circ e_{2} = e_{1}$.
\end{proposition}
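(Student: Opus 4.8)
The reverse implication is immediate: a continuous p-morphism is in particular continuous and order-preserving, so if such an $f$ with $f\circ e_2=e_1$ exists then $(Y_1,e_1)\preceq_H(Y_2,e_2)$, and since both compactifications lie in $\mathcal{K}_E(X)$ this is exactly $(Y_1,e_1)\preceq_E(Y_2,e_2)$. For the forward implication the plan is to show that the \emph{unique} map witnessing $\preceq_E$ is automatically a p-morphism. Assume $(Y_1,e_1)\preceq_E(Y_2,e_2)$, so there is a continuous order-preserving $f\colon Y_2\to Y_1$ with $f\circ e_2=e_1$; as $e_2[X]$ is dense in $Y_2$ and $Y_1$ is Hausdorff, $f$ is the only such map, and it suffices to show it is a p-morphism. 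Since $Y_1,Y_2$ are Esakia spaces, Fact~\ref{fact: Condition for being a continuous p-morphism}.\ref{eqref: Cont p-morphisms} reduces this to proving that $f^{-1}\colon\mathsf{ClopUp}(Y_1)\to\mathsf{ClopUp}(Y_2)$ is a Heyting homomorphism.

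First I would pass to the associated bases. The hypothesis gives $(Y_1,e_1)\preceq(Y_2,e_2)$, so by Theorem~\ref{thm: Isomorphism between Priestley bases and Priestley compactifications} and Remark~\ref{Isomorphism of Priestley bases} we have $\mathcal{R}_{Y_1}\subseteq\mathcal{R}_{Y_2}$; moreover each $\phi_i\colon\mathcal{R}_{Y_i}\to\mathsf{ClopUp}(Y_i)$, $A\mapsto\{x\in Y_i:A\in x\}$, is a lattice isomorphism. Identifying $X$ with its image (Remark~\ref{rem: identification}), a routine check using $f|_X=\mathrm{id}_X$ gives $f^{-1}(\phi_1(A))=\phi_2(A)$ for every $A\in\mathcal{R}_{Y_1}$; that is, under $\phi_1,\phi_2$ the map $f^{-1}$ is identified with the inclusion $\iota\colon\mathcal{R}_{Y_1}\hookrightarrow\mathcal{R}_{Y_2}$. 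Because each $Y_i$ is Esakia, each $\phi_i$ is in fact a Heyting isomorphism (a lattice isomorphism between Heyting algebras preserves $\to$). Hence it remains to show that $\iota$ is a Heyting homomorphism.

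This is the crux, and it is exactly where the Esakia hypothesis is used. By the second clause of Theorem~\ref{Theorem: Heyting and Esakia compactifications characterization}, both $\mathcal{R}_{Y_1}$ and $\mathcal{R}_{Y_2}$ are Esakia bases, i.e.\ Heyting \emph{subalgebras} of the Heyting algebra $\mathsf{Up}(X)$; hence the implication of either ring is computed by that of the ambient lattice $\mathsf{Up}(X)$. Therefore, for $E,F\in\mathcal{R}_{Y_1}$ we have $E\to_{\mathcal{R}_{Y_1}}F=E\to_{\mathsf{Up}(X)}F=E\to_{\mathcal{R}_{Y_2}}F$, and since $\iota$ plainly preserves $\wedge,\vee,\bot,\top$, it is a Heyting homomorphism. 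Consequently $f^{-1}$ is a Heyting homomorphism and $f$ is a p-morphism. The main obstacle is precisely this last step: for merely Heyting bases the two implications can differ (cf.\ Example~\ref{example: HA but not H subalgebra}), so $\iota$ need not preserve $\to$, which explains why the result is stated for Esakia rather than Heyting order-compactifications.
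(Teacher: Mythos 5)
Your proposal is correct and follows essentially the same route as the paper: reduce to showing the witnessing map $f$ is a p-morphism, pass to the Priestley bases to get $\mathcal{R}_{Y_1}\subseteq\mathcal{R}_{Y_2}$, use that both are Esakia bases (Heyting subalgebras of $\mathsf{Up}(X)$) to conclude the inclusion is a Heyting homomorphism, and invoke Esakia duality. Your write-up merely unpacks the paper's final appeal to duality (identifying $f^{-1}$ with the inclusion via the isomorphisms $\phi_i$), which is a faithful elaboration rather than a different argument.
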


\begin{proof}
%\eqref{eq: restriction of the order} 
One implication is clear. For the other, let $(Y_1,e_1)\preceq_{E}(Y_2,e_2)$, so there is a continuous order-preserving map $f:Y_{2}\to Y_{1}$ such that $f \circ e_{2} = e_{1}$. It is sufficient to show that $f$ is a p-morphism. Let $\mathcal{R}_{Y_1},\mathcal{R}_{Y_2}$ be the corresponding Priestley bases, which are Heyting bases by Theorem~\ref{Theorem: Heyting and Esakia compactifications characterization}.\ref{eq: Dwinger for Heyting}. By 
%the definition of $\preceq_{H}$ and 
Theorem~\ref{thm: Isomorphism between Priestley bases and Priestley compactifications}, $\mathcal{R}_{Y_1} \subseteq \mathcal{R}_{Y_2}$.   
Since $(Y_{1},e_{1})$ and $(Y_{2},e_{2})$ are Esakia order-compactifications, $\mathcal{R}_{Y_1}$ and $ \mathcal{R}_{Y_2}$ are Esakia bases. Therefore, each is a Heyting subalgebra of $\mathsf{Up}(X)$. Thus, $\mathcal{R}_{Y_1} \subseteq \mathcal{R}_{Y_2}$ implies that $\mathcal{R}_{Y_1}$ is a Heyting subalgebra of $\mathcal{R}_{Y_2}$. But then $f$ is a p-moprhism by Esakia duality. 
%which together with the inclusion by Theorem~\ref{Theorem: Heyting and Esakia compactifications characterization}.\ref{eqref: Priestley compactification and Esakia bases} and \eqref{eq: Dwinger for Esakia}, 
%Thus, $(Y_1,e_1)\preceq_{E}(Y_2,e_2)$.
\end{proof}

As an immediate consequence of Theorems~\ref{thm: Isomorphism between Priestley bases and Priestley compactifications} and \ref{Theorem: Heyting and Esakia compactifications characterization}, we obtain:

\begin{theorem}\label{Theorem: Isomorphism between Esakia compactifications and Esakia bases}
    Let $X$ be order-zero-dimensional. 
    %space $X$, we have:
    \begin{enumerate}
        %\item \label{eq: Heyting comp and Heyting basis} A Priestley order-compactification $Y$ is a Heyting order-compactification iff $\mathcal{R}_Y$ is a Heyting basis.
        \item \label{eq: Dwinger for Heyting} $(\mathcal{K}_{H}(X),\preceq_H)$ is isomorphic to $(\mathfrak{EB}_{H}(X),\subseteq)$.
        %\item \label{eqref: Priestley compactification and Esakia bases} A Priestley order-compactification $Y$ is an Esakia order-compactification iff $\mathcal{R}_Y$ is an Esakia basis.
        \item \label{eq: Dwinger for Esakia} $(\mathcal{K}_{E}(X),\preceq_{E})$ is isomorphic to $(\mathfrak{EB}_{E}(X),\subseteq)$.
        %\item \label{eq: restriction of the order} \color{red}
        %For $Y,Z$ Esakia order-compactifications, $Y\preceq_{H}Z$ if and only if $Y\preceq_{E}Z$.
        %\color{black}
    \end{enumerate}
\end{theorem}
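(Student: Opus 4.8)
The plan is to obtain both isomorphisms by restricting the order-isomorphism already supplied by Theorem~\ref{thm: Isomorphism between Priestley bases and Priestley compactifications}. Write $\Phi$ for the map $(\mathcal{K}_0(X),\preceq) \to (\mathfrak{PB}(X),\subseteq)$ sending a Priestley order-compactification $(Y,e)$ to its Priestley basis $\mathcal{R}_Y$; by that theorem $\Phi$ is an order-isomorphism. The entire argument then rests on the elementary fact that an order-isomorphism restricts to an order-isomorphism between subposets carrying the induced orders, provided it carries one subposet \emph{onto} the other.

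For part~\eqref{eq: Dwinger for Heyting}, I would first note that $\mathcal{K}_{H}(X)$ is, by construction, the subposet of $\mathcal{K}_0(X)$ whose order $\preceq_H$ is the restriction of $\preceq$, and that $\mathfrak{EB}_{H}(X)$ is the subposet of $\mathfrak{PB}(X)$ ordered by the restriction of $\subseteq$. By Theorem~\ref{Theorem: Heyting and Esakia compactifications characterization}.\ref{eq: Heyting comp and Heyting basis}, a Priestley order-compactification $Y$ lies in $\mathcal{K}_{H}(X)$ exactly when $\mathcal{R}_Y = \Phi(Y)$ is a Heyting basis, i.e.\ lies in $\mathfrak{EB}_{H}(X)$. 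Hence $\Phi$ restricts to a bijection $\mathcal{K}_{H}(X) \to \mathfrak{EB}_{H}(X)$, and since both restricted orders are the respective restrictions of $\preceq$ and of $\subseteq$, this restriction is an order-isomorphism.

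Part~\eqref{eq: Dwinger for Esakia} is entirely parallel: $\mathcal{K}_{E}(X)$ is the subposet of $\mathcal{K}_{H}(X)$ under the restriction $\preceq_E$ of $\preceq_H$ (hence of $\preceq$), and $\mathfrak{EB}_{E}(X)$ sits inside $\mathfrak{EB}_{H}(X)$ under the restriction of $\subseteq$. Theorem~\ref{Theorem: Heyting and Esakia compactifications characterization}.\ref{eqref: Priestley compactification and Esakia bases} identifies $\Phi[\mathcal{K}_{E}(X)]$ with $\mathfrak{EB}_{E}(X)$, and the same restriction principle yields the desired order-isomorphism.

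There is essentially no hard step here; the only point demanding care is the bookkeeping that the orders $\preceq_H$, $\preceq_E$ and the two copies of $\subseteq$ are genuinely the restrictions of $\preceq$ and of the containment order on $\mathfrak{PB}(X)$ — something the surrounding discussion has already established. Once that is in place, the statement is a purely formal consequence of the two cited theorems, which is exactly why it is recorded as immediate.
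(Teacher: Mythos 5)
Your proposal is correct and is exactly the paper's argument: the paper records this theorem as an immediate consequence of Theorems~\ref{thm: Isomorphism between Priestley bases and Priestley compactifications} and \ref{Theorem: Heyting and Esakia compactifications characterization}, i.e., restricting the isomorphism $(Y,e)\mapsto\mathcal{R}_Y$ to the subposets cut out by the Heyting/Esakia characterizations. Your extra bookkeeping that $\preceq_H$ and $\preceq_E$ are restrictions of $\preceq$ is precisely what the paper establishes in the discussion preceding the theorem, so nothing is missing.
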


%\begin{proof}
%\eqref{eq: Dwinger for Heyting} 
%This is a straightforward restriction of %follows immediately by restricting the isomorphism of 
%Apply \eqref{eq: Heyting comp and Heyting basis} and Theorem \ref{thm: Isomorphism between Priestley bases and Priestley compactifications}.
%: if $Y$ is a Heyting order-compactification of $X$, then $\mathcal{R}_{Y}$ is a Heyting basis; conversely, if $\mathcal{R}$ is a Heyting basis, then $\mathsf{Spec}(\mathcal{R})$ is an Esakia space, and hence a Heyting order-compactification of $X$.

%\eqref{eq: Dwinger for Esakia} If $\mathcal{R},\mathcal{S}$ are two Heyting bases of $X$ with 
%$\mathcal{R}\subseteq \mathcal{S}$, then $\mathcal{R}$ is a Heyting subalgebra of $\mathcal{S}$. Consequently,
% there is an onto p-morphism $f:\mathsf{Spec}(\mathcal{S})\to \mathsf{Spec}(\mathcal{R})$ (see Fact \ref{fact: Condition for being a continuous p-morphism}.\ref{eqref: Heyting algebra one to one if p-morphism onto}), which is clearly the identity on $X$. Likewise, if there is an onto p-morphism $f:Y_{1}\to Y_{2}$ which is the identity on $X$ 
 %making the diagram commute with respect to $e_{1}:X\to Y_{1}$ and $e_{2}:X\to Y_{2}$, 
% then $\mathcal{R}_{Y_{2}}\subseteq \mathcal{R}_{Y_{1}}$, and 
 %we have that 
% these two transformations are each other's inverses. Thus, by Theorem \ref{thm: Isomorphism between Priestley bases and Priestley compactifications}, 
% $(K_{E}(X),\preceq_{E})$ is isomorphic to $(\mathfrak{EB}_{E}(X),\subseteq)$.
%\end{proof}

Note that not every Heyting order-compactification is an N-order-compactification. Indeed, in Example~\ref{example: HA but not H subalgebra}, 
%the space 
$Y$ is a Heyting order-compactification of $X$, but it is not an N-order-compactification: for any natural number $n$, we have $n\leq_Y \infty$, but 
%$(\{n\}\times (Y\,\setminus\,\{n\})) \cap \le_X = \varnothing$, so 
$(n,\infty)$ does not belong to the closure of $\le_X$. 
%isolates $\infty$. 
On the other hand, each Esakia order-compactification is indeed an N-order-compactification, as we next show.

\begin{proposition}
    Let $X$ be order-zero-dimensional. If $Y$ is an Esakia order-compactification, then $Y$ is an N-order-compactification.
\end{proposition}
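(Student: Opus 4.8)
The plan is to show that for an Esakia order-compactification $(Y,e)$ of an order-zero-dimensional space $X$, the order $\leq_Y$ equals the closure of $\leq_X$ in $Y \times Y$. Identifying $X$ with its dense image in $Y$ (Remark~\ref{rem: identification}), this reduces to verifying that whenever $x \leq_Y y$ in $Y$, the pair $(x,y)$ lies in $\overline{\leq_X}$; the reverse inclusion $\overline{\leq_X} \subseteq \leq_Y$ is automatic because $\leq_Y$ is closed (as $Y$ is a Nachbin space, Fact~\ref{fact: Basic facts about Priestley spaces}). So the entire content is the density direction.

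First I would fix $(x,y) \in \leq_Y$ and a basic open neighbourhood of $(x,y)$ in $Y \times Y$; by Fact~\ref{fact: Basic facts about Priestley spaces}.\ref{eqref: Clopen upsets and downsets form a subbasis} I may take it of the form $(\phi(A)\setminus\phi(B)) \times (\phi(C)\setminus\phi(D))$ for clopen upsets coming from $\mathcal{R}_Y$, or more simply two basic clopen neighbourhoods $O_x \ni x$ and $O_y \ni y$. I must produce a point $(x',y') \in \leq_X$ (i.e. $x' \leq_X y'$ with $x',y' \in X$) inside $O_x \times O_y$. The natural strategy is a two-step approximation: since $X$ is dense in $Y$ and $x \leq_Y y$, I would like to first move $y$ down to a nearby point of $X$ above $x$, then move $x$ to a nearby point of $X$ below that point. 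The key leverage is the defining property of an Esakia order-compactification, namely that ${\uparrow}_Y x \cap X$ is dense in ${\uparrow}_Y x$ for each $x \in X$, together with the continuity of the order on the Esakia space $Y$, which gives that ${\downarrow}U$ is open whenever $U$ is open (Definition~\ref{def: continuously ordered}).

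The heart of the argument runs as follows. Using density of $X$ in $Y$ I first pick $x_0 \in X \cap O_x$ close to $x$; here I must be careful that moving $x$ does not destroy the relation $x \leq_Y y$, so it is cleaner to approximate from the correct side. Concretely, because $Y$ is an Esakia space, ${\downarrow}O_y$ is an open set containing $x$ (since $x \leq_Y y \in O_y$), so $x \in {\downarrow}O_y \cap O_x$, which is open; by density of $X$ I choose $x' \in X \cap {\downarrow}O_y \cap O_x$. Then $x' \leq_Y z$ for some $z \in O_y$. Now I invoke the Esakia order-compactification hypothesis at the point $x' \in X$: the set ${\uparrow}_Y x' \cap X$ is dense in ${\uparrow}_Y x'$, and since $z \in {\uparrow}_Y x' \cap O_y$ with $O_y$ open, there is $y' \in X \cap {\uparrow}_Y x' \cap O_y$. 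This $y'$ satisfies $x' \leq_Y y'$ with both points in $X$, whence $x' \leq_X y'$ (as $e$ is an order-embedding), and $(x',y') \in O_x \times O_y$. This exhibits a point of $\leq_X$ in the chosen neighbourhood, proving $(x,y) \in \overline{\leq_X}$.

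The main obstacle I anticipate is the ordering of the two approximation steps and ensuring each chosen point stays inside the prescribed neighbourhoods while preserving the relevant order relation. The subtlety is that a naive density approximation of both coordinates independently need not yield a comparable pair; the fix is to use the continuity of the order (openness of ${\downarrow}O_y$) to approximate $x$ within the downset of $O_y$ first, and only then apply the Esakia density of ${\uparrow}_Y x' \cap X$ to approximate $y$. One should double-check that openness of $O_y$ is genuinely used when asserting $y' \in O_y$, and that the Esakia hypothesis is applied at a point $x'$ of $X$ (not of $Y$), which is exactly what the definition allows. Once these two steps are correctly sequenced, the conclusion that every $(x,y) \in \leq_Y$ lies in $\overline{\leq_X}$ follows, and hence $Y$ is an N-order-compactification.
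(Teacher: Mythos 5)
Your proposal is correct and follows essentially the same route as the paper's proof: both use continuity of the order on the Esakia space $Y$ to find a point $x'\in X$ inside $O_x\cap{\downarrow}O_y$ (the paper writes this as $W\cap{\downarrow}_Y V$), and then apply the defining density property of an Esakia order-compactification at $x'$ to produce $y'\in X\cap{\uparrow}_Y x'\cap O_y$, giving $x'\leq_X y'$ in the prescribed neighbourhood. The only differences are cosmetic: the paper works with basic clopen rectangles $W\times V$, and leaves the inclusion $\overline{\leq_X}\subseteq\leq_Y$ implicit, which you spell out via closedness of $\leq_Y$.
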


\begin{proof}
    We may assume that $X$ is a dense subspace of $Y$ and $e$ is the identity.
    Let $(x,y)$ be in $\le_Y$, so $x\le_Y y$, and let $W \times V$ be a basic clopen neighborhood of $(x,y)$. It is sufficient to show that $(W\times V) \cap {\le_X} \ne \varnothing$.
    %$x,y\in Y$ and 
    %$x\leq y$ in $Y$ and $W,V$ be two clopens of $Y$ such that $x\in W$ and $y\in V$. 
    Since $x \le_Y y$, $W,V$ are clopen in $Y$,  and $\le_Y$ is continuous, we have that $W\cap {\downarrow}_Y V$ is a nonempty clopen of $Y$, so $(W\cap {\downarrow}_Y V)\cap X \ne \varnothing$. Therefore, 
    there is $x'\in X$ such that 
    $x'\in W \cap {\downarrow}_Y V$. Because $Y$ is an Esakia order-compactification, $X\cap {\uparrow}_Y x'$ is dense in ${\uparrow}_Y x'$. Thus, since ${\uparrow}_Y x'\cap V\neq \emptyset$, there is $y'\in X$ with $x'\leq_Y y'$ and $y'\in V$. As ${\le_Y} \cap X^2 = {\le_X}$, we obtain that $x' \le_X y'$, so $(W\times V) \cap {\le_X} \ne \varnothing$,
    %. This shows that $\leq_Y$ is the closure of $\le_X$, 
    and hence $Y$ is an N-order-compactification.
\end{proof}

Observe that not every order-zero-dimensional space admits a Heyting or Esakia order-compact\-ification. Indeed, if $X$ is a Priestley space which is not Esakia, then $X$ does not admit any Heyting order-compactification, and hence it also does not admit any Esakia order-compactification. This motivates the following definition.

\begin{definition} \label{def: H and E OZD}
    We call an order-zero-dimensional space $X$
    \begin{enumerate}
        \item \textit{H-order-zero-dimensional} provided 
        $X$ admits a Heyting order-compactification (meaning that ${\mathcal{K}_{H}(X)\neq \emptyset}$);
        \item {\em E-order-zero-dimensional} provided 
        $X$ admits an Esakia order-compactification (meaning that  ${\mathcal{K}_{E}(X) \neq \emptyset}$).
    \end{enumerate} 
\end{definition}

%We have the following
%can strengthen Theorem \ref{Theorem: Isomorphism between Esakia compactifications and Esakia bases} to a 
%characterization of E-order-zero-dimensionality:
We next show that admitting an Esakia order-compactification is equivalent to the order being continuous.

\begin{theorem} \label{prop: cont ordered}
    For an order-zero-dimensional space $X$, the following are equivalent.
    \begin{enumerate}
        \item $X$ is continuously ordered.
        \item $\eta_{0}X$ is an Esakia order-compactification.
        \item $X$ is E-order-zero-dimensional. 
        \item $\mathsf{ClopUp}(X)$ is an Esakia basis.
    \end{enumerate}
\end{theorem}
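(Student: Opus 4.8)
The plan is to run the equivalences as the cycle $(4)\Leftrightarrow(2)\Rightarrow(3)\Rightarrow(1)\Rightarrow(4)$, since two of the links are essentially free. For $(2)\Leftrightarrow(4)$, I would observe that $\eta_0 X$ is by construction the Priestley space of $\mathsf{ClopUp}(X)$, so its associated Priestley basis $\mathcal{R}_{\eta_0 X}$ equals $\mathsf{ClopUp}(X)$ itself; Theorem~\ref{Theorem: Heyting and Esakia compactifications characterization}.\ref{eqref: Priestley compactification and Esakia bases} then says precisely that $\eta_0 X$ is an Esakia order-compactification iff $\mathsf{ClopUp}(X)$ is an Esakia basis. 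The link $(2)\Rightarrow(3)$ is immediate, since $\eta_0 X$ itself witnesses $\mathcal{K}_E(X)\neq\emptyset$.

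For $(3)\Rightarrow(1)$, I would take an Esakia order-compactification $(Y,e)$ and identify $X$ with the dense subspace $e[X]$. Condition (i) of continuity is easy: ${\uparrow}_Y x$ is closed in the Nachbin space $Y$ and ${\uparrow}_X x={\uparrow}_Y x\cap X$ (order-embedding), so ${\uparrow}_X x$ is closed in $X$. For condition (ii), given open $U\subseteq X$ I would write $U=\tilde U\cap X$ with $\tilde U$ open in $Y$ and prove the identity ${\downarrow}_X U={\downarrow}_Y\tilde U\cap X$. The inclusion $\subseteq$ is formal; for $\supseteq$, a point $x\in X$ with some $w\geq_Y x$ in $\tilde U$ gives a nonempty open subset ${\uparrow}_Y x\cap\tilde U$ of ${\uparrow}_Y x$, and the defining density of the Esakia order-compactification (${\uparrow}_Y x\cap X$ dense in ${\uparrow}_Y x$) produces a witness $u\in X\cap\tilde U=U$ with $u\geq_X x$. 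Since $Y$ is an Esakia space, ${\downarrow}_Y\tilde U$ is open, hence ${\downarrow}_X U={\downarrow}_Y\tilde U\cap X$ is open in $X$, giving (ii).

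The main obstacle is $(1)\Rightarrow(4)$: that continuity of the order forces $\mathsf{ClopUp}(X)$ to be an Esakia basis. As $X$ is order-zero-dimensional, $\mathsf{ClopUp}(X)$ is already a Priestley basis, so what remains is that it is a \emph{Heyting subalgebra} of $\mathsf{Up}(X)$, i.e.\ closed under the ambient Heyting implication. For clopen upsets $U,V$ one has $U\to_{\mathsf{Up}(X)}V = X\setminus{\downarrow}(U\setminus V)$, and the task reduces to showing this is a clopen upset. Being an upset is formal, and since $U\setminus V$ is open, condition (ii) of continuity gives that ${\downarrow}(U\setminus V)$ is open, so $U\to_{\mathsf{Up}(X)}V$ is \emph{closed}; this half is routine.

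The hard half is \emph{openness} of $U\to_{\mathsf{Up}(X)}V$, equivalently closedness of ${\downarrow}(U\setminus V)$, and this is where I expect the real difficulty to lie. Concretely, for each $x$ with ${\uparrow}x\cap(U\setminus V)=\emptyset$ one must produce an open neighbourhood of $x$ disjoint from ${\downarrow}(U\setminus V)$; the natural strategy is to separate the closed upset ${\uparrow}x$ (closed by (i)) from the clopen set $U\setminus V$ by a \emph{clopen upset} $W$ with ${\uparrow}x\subseteq W\subseteq X\setminus(U\setminus V)$, for then $W$ is an open neighbourhood of $x$ and, being an upset, satisfies ${\uparrow}z\cap(U\setminus V)=\emptyset$ for every $z\in W$. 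This separation is exactly the point where continuity of the order must be combined with order-zero-dimensionality in an essential way — it is the analogue, in the present non-compact setting, of the clopen-upset separation available in Priestley spaces (Fact~\ref{fact: Basic facts about Priestley spaces}.\ref{eq: Closed upsets can be separated}), and I would try either to derive it directly from the Esakia-type identity ${\uparrow}\overline{A}=\overline{{\uparrow}A}$ for $X$ or to transfer it through the dense embedding into $\eta_0 X$. Securing this separating clopen upset is the crux of the whole theorem; once $(1)\Rightarrow(4)$ is established, the cycle closes.
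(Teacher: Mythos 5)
Your handling of $(4)\Leftrightarrow(2)$, $(2)\Rightarrow(3)$, and $(3)\Rightarrow(1)$ is correct and coincides with the paper's own proof (the paper skips your verification of condition (i) in $(3)\Rightarrow(1)$, since under the Priestley separation axiom ${\uparrow}x$ is an intersection of clopen upsets and hence automatically closed). The genuine gap is $(1)\Rightarrow(4)$: you prove only that $U\rightarrow_{\mathsf{Up}(X)}V=X\,\setminus\,{\downarrow}(U\,\setminus\,V)$ is a closed upset, and you leave its openness --- equivalently, the closedness of ${\downarrow}(U\,\setminus\,V)$ --- as an unresolved ``crux,'' sketching two strategies without carrying either out. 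Since the whole cycle hinges on this link, the proposal as written is not a proof.

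Moreover, the difficulty you isolated cannot be overcome from the stated hypotheses, so no variant of your separation strategy can close it. Take $X=\{a_{n} : n\geq 0\}\cup\{b_{n} : n \geq 0\}\cup\{a_{\infty}\}$, where each $a_{n}$ and $b_{n}$ is isolated, the sets $\{a_{\infty}\}\cup\{a_{m}:m\geq k\}$ form a neighbourhood base at $a_{\infty}$, and the order is given by $a_{n}<b_{n}$ for each $n$ (all other distinct pairs incomparable). Then $X$ is order-zero-dimensional (the relevant clopen upsets are the finite unions of $\{a_{n},b_{n}\}$, $\{b_{n}\}$, $\{b_{m}:m\geq k\}$, and $\{a_{\infty}\}\cup\{a_{m},b_{m}:m\geq k\}$), each ${\uparrow}z$ is closed, and ${\downarrow}O$ is open for every open $O$ because the $a_{n}$ are isolated; so $X$ satisfies Definition~\ref{def: continuously ordered} literally, i.e., condition (1) holds. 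Yet $E=\{b_{n}:n\geq 0\}$ is a clopen upset with $E\rightarrow_{\mathsf{Up}(X)}\emptyset=\{a_{\infty}\}$, which is not open, so $\mathsf{ClopUp}(X)$ is not an Esakia basis and (4) fails; concretely, the separating clopen upset $W$ with ${\uparrow}a_{\infty}\subseteq W\subseteq X\,\setminus\,E$ that your plan requires does not exist, since every clopen upset containing $a_{\infty}$ contains cofinitely many $a_{m}$ and hence cofinitely many $b_{m}$. (One can also check directly that this $X$ admits no Esakia order-compactification, so (3) fails as well.) This means the issue is definitional rather than technical: the paper's own proof of $(1)\Rightarrow(4)$ is a single line asserting that ${\downarrow}(E\,\setminus\,F)$ is \emph{clopen} ``since $X$ is continuously ordered,'' which presupposes Esakia's original reading of continuity --- ${\downarrow}$ of every \emph{clopen} set is clopen --- a condition equivalent to Definition~\ref{def: continuously ordered} on compact spaces but strictly stronger in general. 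Under that stronger reading your ``hard half'' holds by definition and $(1)\Rightarrow(4)$ is immediate, which is evidently what the paper intends; under the literal reading the implication is false, as the example shows. (Note also that under the stronger reading, the $(3)\Rightarrow(1)$ argument --- yours and the paper's --- only yields that ${\downarrow}_{X}$ of an open set is open, so that link would then need supplementing.)
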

\begin{proof}
%(1)$\Rightarrow$(2) 
(2)$\Leftrightarrow$(4) This follows from Theorem \ref{Theorem: Isomorphism between Esakia compactifications and Esakia bases}.\ref{eqref: Priestley compactification and Esakia bases}.

(2)$\Rightarrow$(3) This is obvious. 

(1)$\Rightarrow$(4)  
%note that if $X$ is order-zero-dimensional, then whenever $C$ is clopen, ${\downarrow}C$ is likewise clopen. Hence if 
Let $E,F \in \mathsf{ClopUp}(X)$. 
%be clopen upsets of $X$. 
Then $E \,\setminus\, F$ is clopen, so ${\downarrow}(E\,\setminus\,F)$ is clopen since $X$ is continuously ordered. Therefore, $E\rightarrow_{\mathsf{Up}(X)} F \in \mathsf{ClopUp}(X)$, 
%is a clopen upset, 
and hence $\mathsf{ClopUp}(X)$ is a Heyting subalgebra of $\mathsf{Up}(X)$.

(3)$\Rightarrow$(1) Let $Y$ be an Esakia order-compactification of $X$. It suffices to show that $U$ open in $X$ implies that ${\downarrow}_{X}U$ is open in $X$. 
%Since $U$ is open in $X$, 
We have $U=W\cap X$ for some open $W \subseteq Y$. Since $Y$ is an Esakia space, 
%For that purpose, note that 
${\downarrow}_{Y}W$ is open in $Y$. We show that
\begin{equation*}
    {\downarrow}_{X}U={\downarrow}_{Y}W\cap X.
\end{equation*}
%indeed 
One inclusion is obvious. For the other, suppose that $x\in X$ and $x\in {\downarrow}_{Y}W$. Then ${\uparrow}_Yx\cap W\neq \emptyset$. Since $Y$ is an Esakia order-compactification, ${\uparrow}_Xx$ is dense in ${\uparrow}_Yx$, so ${\uparrow}_Xx\cap W\neq \emptyset$. Therefore, there is $y'\in U$ such that $x\leq y'$, yielding that $x\in {\downarrow}_{X}U$. 
%Since $Y$ is an Esakia space, and so ${\downarrow}_{Y}W$ is open, this tells us that 
Thus, ${\downarrow}_{X}U$ is open in $X$.
\end{proof}

However, H-order-zero-dimensionality is not equivalent to E-order-zero-dimensionality:
%any of the conditions in Theorem \ref{prop: cont ordered}:
%The following example shows  that  $E$-order-zero-dimensionality, or any of the other conditions:

\begin{example}\label{example: Non-continuously ordered space which admits Esakia compactification}
Let $C$
%$=(C,\leq)$ 
be the Cantor space with its usual order. We fix a dense subset $D$ of $C$, let $D'=\{x' : x\in D\}$ be a 
%disjoint 
copy of $D$, and define $X$ to be the topological sum $C\cup D'$ 
%, with the disjoint union topology, together 
with the order given by the reflexive closure of $\{(x,x') : x\in D\}$ inside $C\cup D'$. In Figure \ref{fig:counterexampletocontinuouslyordered}, the white circle denotes a point $y'\notin D'$ which is omitted in our space $C\cup D'$.

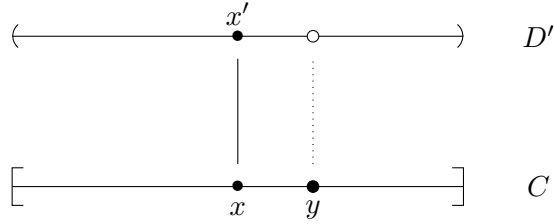
\begin{figure}[h]
    \centering
\begin{tikzpicture}
\node at (0,0) {$\bullet$};
\draw (-3,0) -- (3,0);
\node at (0,0.3) {$x'$};
\filldraw[fill=white, draw=black] (1,0) circle (0.2em);
\draw (-3,-2) -- (3,-2);
\node at (0,-2) {$\bullet$};
\node at (0,-2.3) {$x$};
\filldraw[fill=black, draw=black] (1,-2) circle (0.2em);
\node at (1,-2.3) {$y$};

\draw[dotted] (1,-1.7) -- (1,-0.3);

\draw (0,-1.7) -- (0,-0.3);

\draw (2.92,-0.15) arc (-50:50:0.2);

\draw (-2.92,-0.15) arc (230:130:0.2);

\draw (-2.85,-2.25) -- (-3,-2.25) -- (-3,-1.75) -- (-2.85,-1.75);

\draw (2.85,-2.25) -- (3,-2.25) -- (3,-1.75) -- (2.85,-1.75);

\node at (4,-2) {$C$};
\node at (4,0) {$D'$};

\end{tikzpicture}
    \caption{H-order-zero-dimensional space that is not E-order-zero-dimensional}
    \label{fig:counterexampletocontinuouslyordered}
\end{figure}

It is straightforward to check that $X$ is order-zero-dimensional. Moreover, since $D'$ is clopen in $X$, 
%the subspace $(D',\leq_{D'})$; 
if $X$ were continuously ordered,  ${\downarrow}D'=D'\cup D$ would be clopen in $X$, which would imply that $D$ is clopen in $C$, a contradiction. Thus, $X$ is not continuously ordered, and hence it is not E-order-zero-dimensional by Theorem \ref{prop: cont ordered}.

We show that $X$ is H-order-zero-dimensional. 
Let $C'$ be a copy of $C$ and let $Y$ be the topological sum $C\cup C'$ with the order given by the reflexive closure of $\{(x,x') : x\in C\}$ in $C\cup C'$.   
To see that $Y$ is an Esakia space, we clearly have that ${\uparrow}y$ is closed for each $y\in Y$. Moreover, each clopen of $Y$ is of the form $U\cup V'$, with $U,V$ clopen of $C$, so ${\downarrow}(U\cup V')=(U\cup V')\cup V$, which is clopen of $Y$. In addition, $X$ is (homeomorphic to) a dense subspace of $Y$. Thus, $Y$ is a Heyting order-compactification of $X$, and hence $X$ is H-order-zero-dimensional.  
\end{example}

\section{Locally Esakia spaces and functoriality} \label{sec: preEsakia}

It is a classic result in topology that taking the Stone-\v{C}ech compactification is left adjoint to the inclusion of the category of compact Hausdorff spaces into that of completely regular spaces (see, e.g., \cite[pp.130-131]{johnstone1982stone}).
%\cite{Cech1937}. 
Likewise, taking the Banaschewski compactification is left adjoint to the inclusion of the category of Stone spaces into that of zero-dimensional (Hausdorff) spaces \cite{Banaschewski1955}. 
%As we will see in Theorem \ref{cor: Adjunction for inclusions in Order-zero-dimensional and Priestley}, 
The latter result extends to yield that $\eta_0$ is left adjoint to the inclusion of the category of Priestley spaces into that of order-zero-dimensional spaces (see, e.g., \cite[Rem.~3.10]{BezhanishviliMorandi2010}). %\cite{Koubek1991}. 
%In this final section 
We will prove the same result for the category $\mathsf{Esa}$ of Esakia spaces. As we saw in Theorem~\ref{prop: cont ordered}, when $X$ is E-order-zero-dimensional, $\eta_0X$ is the largest Esakia order-compactification of $X$. Therefore, a natural category to work with appears to be the category of E-order-zero-dimensional spaces. However, the embedding $X \hookrightarrow \eta_0X$ may in general not be a p-morphism. We thus need to restrict our attention to those E-order-zero-dimensional spaces in which the embedding $X \hookrightarrow \eta_0X$ is indeed a p-moprhism. This gives rise to the notion of a locally Esakia space, which is the main subject of study in this section. 

%As we saw in Section \ref{Section: Esakia order-compactifications}, for arbitrary order-zero-dimensional spaces the study of those Priestley compactifications which are Esakia becomes rather involved: we have two distinct notions of Heyting and Esakia order-compactifications, and the natural notions of being zero-dimensional for this setting likewise come apart. In this section we study a subclass of order-zero-dimensional spaces which are much better behaved, and study its functoriality.

%To motivate our definitions, note that one may argue that the setting we have been working with so far is slightly unnatural -- after all, as noted, the morphisms of Esakia spaces are \textit{p-morphisms}, yet we have not required that the map from $X$ to its order-compactification $Y$ is a p-morphism -- meaning that $X$ can be identified with an upset of $Y$. This has some substantial effects:

\begin{lemma}\label{Upsets are Esakia spaces}
    Let $X$ be order-zero-dimensional and $Y$ a Heyting order-compactification of $X$. If $X$ is an upset of $Y$, then ${\uparrow}_{X}x$ is an Esakia space for each $x\in X$.
\end{lemma}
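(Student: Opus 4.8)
The plan is to reduce the statement to the fact that each closed upset of an Esakia space is again an Esakia space (Fact~\ref{fact: Condition for being a continuous p-morphism}.\ref{eqref: Each closed upset is an Esakia space}). Throughout I would adopt the standing identification of Remark~\ref{rem: identification}, viewing $X$ as a dense order-embedded subspace of $Y$, so that $\le_X$ is the restriction of $\le_Y$ to $X^2$ and the topology on $X$ is the subspace topology inherited from $Y$. Note that since $Y$ is a Heyting order-compactification, $Y$ is by definition an Esakia space.

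The crux of the argument is a single observation: the hypothesis that $X$ is an upset of $Y$ collapses the distinction between forming upsets in $X$ and in $Y$. Fix $x\in X$. Since $x\in X$ and $X={\uparrow}_Y X$, every point of $Y$ lying above $x$ already belongs to $X$; that is, ${\uparrow}_Y x\subseteq X$. Consequently
\[
{\uparrow}_X x = {\uparrow}_Y x \cap X = {\uparrow}_Y x .
\]
I would then stress that this equality is not merely set-theoretic but one of ordered topological spaces: because ${\uparrow}_Y x\subseteq X\subseteq Y$ and $X$ carries the subspace topology and restricted order of $Y$, the subspace structure that ${\uparrow}_X x$ inherits from $X$ agrees with the subspace structure that ${\uparrow}_Y x$ inherits from $Y$.

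It then remains only to verify that ${\uparrow}_Y x$ is a \emph{closed} upset of $Y$. This is immediate: since $Y$ is an Esakia space its order is continuous, so ${\uparrow}_Y x$ is closed by Definition~\ref{def: continuously ordered} (alternatively, $Y$ is a Nachbin space and $\{x\}$ is closed, whence ${\uparrow}_Y x$ is closed by Fact~\ref{fact: Basic facts about Priestley spaces}.1). Being a closed upset of the Esakia space $Y$, it is itself an Esakia space by Fact~\ref{fact: Condition for being a continuous p-morphism}.\ref{eqref: Each closed upset is an Esakia space}. Combining this with the identification of the previous paragraph yields that ${\uparrow}_X x$ is an Esakia space, as desired. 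I do not anticipate a genuine obstacle here; the only point requiring care is making the subspace identification explicit, so that one is entitled to transport the Esakia structure of ${\uparrow}_Y x$ back to ${\uparrow}_X x$. The real content of the lemma is the upset hypothesis, which is precisely what forces ${\uparrow}_Y x\subseteq X$ and thereby makes the ambient Esakia structure of $Y$ available on the upsets of points of $X$.
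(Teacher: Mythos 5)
Your proof is correct and takes essentially the same route as the paper's: the upset hypothesis yields ${\uparrow}_X x = {\uparrow}_Y x$, and the latter is an Esakia space as a closed upset of the Esakia space $Y$ by Fact~\ref{fact: Condition for being a continuous p-morphism}.\ref{eqref: Each closed upset is an Esakia space}. The extra points you make explicit (closedness of ${\uparrow}_Y x$ and the agreement of the inherited subspace structures) are left implicit in the paper but constitute the same argument.
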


\begin{proof}
    Since $Y$ is an Esakia space, 
    %each closed upset of $Y$ is an Esakia space (see, e.g., \cite[Lem.~3.3.13]{Esakiach2019HeyAlg}). 
    %Thus, 
    ${\uparrow}_{Y}x$ is an Esakia space (see Fact~\ref{fact: Condition for being a continuous p-morphism}.\ref{eqref: Each closed upset is an Esakia space}). But since $X$ is an upset of $Y$, we have ${\uparrow}_{X}x={\uparrow}_{Y}x$, yielding the result. 
\end{proof}

In light of Lemma \ref{Upsets are Esakia spaces}, we will restrict our attention to %consider only 
those order-zero-dimensional spaces $X$ in which ${\uparrow}x$ is an Esakia space for each $x\in X$. The key to ensure this is that ${\uparrow}x$ is compact.

\begin{definition}\ \label{def: locally Esakia}
    \begin{enumerate}
    \item We call an ordered space $X$ \textit{image-compact} provided ${\uparrow}x$ is compact for each $x\in X$.
        \item We call an ordered space $X$ \textit{locally Esakia} provided it is E-order-zero-dimensional and image-compact.
        \item Let $\mathsf{LocEsa}$ denote the category of locally Esakia spaces and continuous p-morphisms.
    \end{enumerate}
\end{definition}

\begin{remark}\
\begin{enumerate}
    \item The above notion of an image-compact space generalizes the well-known notion of an {\em image-finite poset}---a poset in which ${\uparrow}x$ is finite for each $x$ (such posets were first considered %studied 
    in \cite[Def.~4.6]{Bezhanishvili2006} and were coined ``image-finite" in \cite[Def.~3.5]{BB2008}). Indeed, if the topology of an ordered space $X$ is discrete, then $X$ is image-compact iff it is image-finite. Consequently, both Esakia spaces and image-finite posets are
    %can be seen as 
    examples of image-compact spaces.
    \item Clearly $\mathsf{Esa}$ is a full subcategory of $\mathsf{LocEsa}$. In fact, a locally Esakia space is an Esakia space iff it is compact. Thus, locally Esakia spaces generalize Esakia spaces the same way locally Stone spaces generalize Stone spaces.
\end{enumerate}
    \end{remark}

%For Esakia order-compactifications $Y$ of $X$, the condition of being image-compactness turns out to be equivalent to the space $X$ being an upset in $Y$:

The next result characterizes when $X$ is an upset in its order-compactification that is an Esakia space.

\begin{proposition}\label{Lem: Equivalence of Esakia order-compactification and upset}
    Let $X$ be order-zero-dimensional and $Y$ a Heyting order-compactification of $X$. Then $X$ is an upset of $Y$ iff $X$ is image-compact and $Y$ is an Esakia order-compactification of $X$. 
    %the following are equivalent:
    %\begin{enumerate}
    %    \item $Y$ is an Esakia order-compactification and $X$ is image-compact;
    %    \item ${\uparrow}_{Y}X=X$.
    %\end{enumerate}
\end{proposition}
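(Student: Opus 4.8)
The plan is to prove the two directions of the biconditional separately, using the dense upset structure and the characterization of Esakia order-compactifications from Definition~\ref{def: Heyting and Esakia compactifications} together with Lemma~\ref{Upsets are Esakia spaces}.

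\textbf{Forward direction.} First I would assume that $X$ is an upset of $Y$. By Lemma~\ref{Upsets are Esakia spaces}, ${\uparrow}_X x$ is an Esakia space (in particular compact) for each $x \in X$, so $X$ is image-compact immediately. It remains to verify that $Y$ is an Esakia order-compactification, i.e.\ that ${\uparrow}_X x$ is dense in ${\uparrow}_Y x$ for each $x \in X$. But since $X$ is an upset of $Y$, we have ${\uparrow}_X x = {\uparrow}_Y x \cap X = {\uparrow}_Y x$, where the last equality uses that ${\uparrow}_Y x \subseteq X$ because $x \in X$ and $X$ is an upset. Thus ${\uparrow}_X x$ equals (hence is trivially dense in) ${\uparrow}_Y x$, and $Y$ is an Esakia order-compactification. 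This direction is essentially immediate once the upset condition is unpacked.

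\textbf{Reverse direction.} This is where the real work lies. I would assume $X$ is image-compact and $Y$ is an Esakia order-compactification, and aim to show $X$ is an upset of $Y$. Take $x \in X$ and $y \in Y$ with $x \le_Y y$; the goal is $y \in X$. Since $Y$ is an Esakia order-compactification, ${\uparrow}_X x$ is dense in ${\uparrow}_Y x$, so ${\uparrow}_Y x = \overline{{\uparrow}_X x}$ (closure in $Y$). Now ${\uparrow}_X x$ is compact by image-compactness, and its image under the (identity) embedding sits inside the Hausdorff space $Y$ (recall every Priestley space is Hausdorff), so ${\uparrow}_X x$ is a compact, hence closed, subset of $Y$. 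Therefore ${\uparrow}_Y x = \overline{{\uparrow}_X x} = {\uparrow}_X x \subseteq X$, which gives $y \in {\uparrow}_Y x \subseteq X$ as required.

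\textbf{Main obstacle.} The crux is the observation that a compact subset of the Hausdorff space $Y$ is closed, so that density of ${\uparrow}_X x$ in ${\uparrow}_Y x$ can be upgraded to equality: the closure of ${\uparrow}_X x$ cannot be strictly larger than ${\uparrow}_X x$ itself. I would be careful to invoke compactness of ${\uparrow}_X x$ \emph{as a subspace of $Y$} rather than merely intrinsically; since we identify $X$ with its dense image in $Y$ (Remark~\ref{rem: identification}), image-compactness of $X$ gives precisely that ${\uparrow}_X x$ is compact as a subspace of $X$, and the embedding being topological transfers this to compactness in $Y$. Once equality ${\uparrow}_Y x = {\uparrow}_X x$ is established, the upset property of $X$ follows directly, completing the proof.
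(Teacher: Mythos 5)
Your proof is correct, and while your forward direction matches the paper's (image-compactness via Lemma~\ref{Upsets are Esakia spaces}, density from ${\uparrow}_X x={\uparrow}_Y x$), your reverse direction takes a genuinely different and more elementary route. The paper fixes $x\in X$ and $y\geq_Y x$ and runs a finite-intersection-property argument: it considers the family $\{{\uparrow}_Y x\cap U\cap X : y\in U\}\cup\{{\uparrow}_Y x\,\setminus\,(V\cap X) : y\notin V\}$ with $U,V\in\mathsf{ClopUp}(Y)$, refutes failure of the finite intersection property using the density clause together with the Heyting implication $U\rightarrow_{\mathsf{ClopUp}(Y)}V$, and then uses compactness to extract a point $y'\in X$ lying in exactly the same clopen upsets of $Y$ as $y$, so that $y=y'$ by the Priestley separation axiom. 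You replace all of this with the single topological observation that a compact subspace of a Hausdorff space is closed: image-compactness (transported along the topological embedding, as you correctly emphasize) makes ${\uparrow}_X x$ a compact, hence closed, subset of $Y$, and density then upgrades to the equality ${\uparrow}_Y x=\overline{{\uparrow}_X x}={\uparrow}_X x\subseteq X$. Your argument is shorter, bypasses the Heyting-algebraic machinery entirely, and makes explicit the slightly stronger conclusion ${\uparrow}_Y x={\uparrow}_X x$; what the paper's argument offers in exchange is that it stays within the clopen-upset and Priestley-separation toolkit used throughout the paper and shows how the implication of $\mathsf{ClopUp}(Y)$ interacts with the density condition. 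Your care about where compactness lives is also not a pedantic point: it is exactly what makes the members of the paper's family closed sets (so that compactness applies), and it is the place where the hypothesis of image-compactness genuinely enters either proof.
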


\begin{proof}
First suppose that $X$ is an upset of $Y$. 
%is a Heyting order-compactification and ${\uparrow}_{Y}X=X$. 
Then $X$ is image-compact by Lemma \ref{Upsets are Esakia spaces}. Moreover, since $X$ is an upset in $Y$, we have ${\uparrow}_{X}x={\uparrow}_{Y}x$ for each $x\in X$, so ${\uparrow}_{X}x$ is dense in ${\uparrow}_{Y}x$, and hence $Y$ is an Esakia order-compactification of $X$. %it is clear that the latter is dense in the former, and so the result immediately follows.

Conversely, suppose that $X$ is image-compact and $Y$ is an Esakia order-compactification of $X$. 
%is image-compact, and assume that 
Let $x\in X$ and $x \le_Y y$. 
%in $\in {\uparrow}_{Y}x$ for some  and $y\in Y$. 
%By identifying $Y$ with $\mathsf{Spec}(\mathcal{R}_Y)$, we think of $y$ as a prime filter of $\mathcal{R}_{Y}$. 
%some Esakia basis. We show that the family 
Consider the family 
    \begin{equation*}
    \mathcal{F} = \{ {\uparrow}_Y x \cap U\cap X : y \in U\} \cup \{{\uparrow}_Y x \,\setminus\, (V\cap X) : y \notin V \},
    \end{equation*}
    where $U,V\in \mathsf{ClopUp}(Y)$. If $\mathcal{F}$ does not have the finite intersection property, then  
    %Otherwise, 
    \begin{equation*}
        {\uparrow}_Y x\cap U_{1}\cap...\cap U_{n}\cap X \subseteq (V_{1}\cup...\cup V_{m}) \cap X.
    \end{equation*}
    for some $U_1,...,U_n,V_1,...,V_m\in\mathsf{ClopUp}(Y)$.
    Let $U=U_{1}\cap...\cap U_{n}$ and $V=V_{1}\cup...\cup V_{m}$. Then ${U,V\in\mathsf{ClopUp}(X)}$ and ${\uparrow}_Y x \cap U \cap X \subseteq V\cap X$. . Therefore, ${\uparrow}_Y x \cap X \subseteq (Y\,\setminus\,U)\cup V$. Since $Y$ is an Esakia order-compactification of $X$, ${\uparrow}_Xx$ is dense in ${\uparrow}_Yx$, so $${\uparrow}_Yx=\overline{{\uparrow}_Yx\cap X}\subseteq (Y\,\setminus\,U)\cup V.$$ Thus, $x\in U\rightarrow_{\mathsf{ClopUp}(Y)} V$, and hence $y\in U\rightarrow_{\mathsf{ClopUp}(Y)}V$, which is a contradiction since $y\in U$ and $y \notin V$.
    %, by upwards closure, . 
    Consequently, $\mathcal{F}$ has the finite intersection property, and because ${\uparrow}_Yx$ is compact, there is $$y'\in \bigcap \{ {\uparrow}_Yx \cap U \cap X : y\in U \} \cap \bigcap \{ {\uparrow}_Y x \,\setminus\, (V \cap X) : y\notin V \}.$$ Since $y$ and $y'$ are contained in the same clopen upsets of $Y$, it follows from the Priestley separation axiom that 
    %Then it follows that 
    $y=y'$, so 
    %by Priestley separation, meaning 
    %yielding that 
    $y\in X$, and hence $X$ is an upset of $Y$. 
\end{proof}

From this we obtain the following version
%strengthening 
of Theorem \ref{prop: cont ordered} for image-compact spaces.

\begin{theorem}\label{Special facts about Esakia spaces}
    For an order-zero-dimensional space $X$, the following are equivalent:
    \begin{enumerate}
        \item $X$ is a locally Esakia space.
        %$E$-order-zero-dimensional and image-compact.
        \item $X$ has an Esakia order-compactification $Y$ in which $X$ is an upset.
        %and ${\uparrow}_{Y}X=X$;
        \item $\eta_{0}X$ is an Esakia order-compactification in which $X$ is an upset.
        %and ${\uparrow}_{\eta_{0}(X)}X=X$.
    \end{enumerate}
\end{theorem}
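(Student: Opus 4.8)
The plan is to establish the three equivalences via the cycle (1)$\Rightarrow$(3)$\Rightarrow$(2)$\Rightarrow$(1), with essentially all of the substantive work already packaged into the two preceding results. The engine is Theorem~\ref{prop: cont ordered}, which identifies E-order-zero-dimensionality with the statement that $\eta_{0}X$ is an Esakia order-compactification, together with Proposition~\ref{Lem: Equivalence of Esakia order-compactification and upset}, which characterizes when $X$ occurs as an \emph{upset} of a Heyting order-compactification (namely, exactly when $X$ is image-compact and the compactification is Esakia).

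For (1)$\Rightarrow$(3), I would unwind the definition: a locally Esakia space is E-order-zero-dimensional and image-compact. The E-order-zero-dimensionality feeds into Theorem~\ref{prop: cont ordered} to give that $\eta_{0}X$ is an Esakia order-compactification, and hence in particular a Heyting order-compactification. One then applies Proposition~\ref{Lem: Equivalence of Esakia order-compactification and upset} with $Y = \eta_{0}X$: image-compactness of $X$ together with $\eta_{0}X$ being Esakia yields that $X$ is an upset of $\eta_{0}X$, which is precisely (3). The step (3)$\Rightarrow$(2) is immediate, taking $Y = \eta_{0}X$. For (2)$\Rightarrow$(1), the mere existence of an Esakia order-compactification $Y$ shows $\mathcal{K}_{E}(X) \neq \varnothing$, so $X$ is E-order-zero-dimensional by definition; and since $Y$ is Esakia (hence Heyting) and $X$ is an upset of $Y$, Proposition~\ref{Lem: Equivalence of Esakia order-compactification and upset} gives that $X$ is image-compact. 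Thus $X$ is locally Esakia.

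I do not expect any genuine obstacle here, since the difficult finite-intersection-property argument lives in Proposition~\ref{Lem: Equivalence of Esakia order-compactification and upset}. The only points requiring care are bookkeeping about hypotheses: Proposition~\ref{Lem: Equivalence of Esakia order-compactification and upset} applies only to \emph{Heyting} order-compactifications, so at each invocation I must first record that an Esakia order-compactification is in particular a Heyting one (immediate from Definition~\ref{def: Heyting and Esakia compactifications}); and it is Theorem~\ref{prop: cont ordered} specifically that promotes E-order-zero-dimensionality into the concrete fact that the canonical compactification $\eta_{0}X$ carries Esakia structure, which is what lets (1) reach the sharper conclusion (3) rather than merely (2). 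Once these two lemmas are in place, the proof is a short chain of implications.
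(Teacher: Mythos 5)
Your proposal is correct, and for (1)$\Rightarrow$(3) and (3)$\Rightarrow$(2) it coincides with the paper's proof: both rest on Theorem~\ref{prop: cont ordered} to get that $\eta_{0}X$ is an Esakia order-compactification and on Proposition~\ref{Lem: Equivalence of Esakia order-compactification and upset} to get the upset property. The difference is in (2)$\Rightarrow$(1). You observe that E-order-zero-dimensionality is \emph{immediate}: statement (2) exhibits an Esakia order-compactification, so $\mathcal{K}_{E}(X)\neq\varnothing$, which is literally Definition~\ref{def: H and E OZD}; you then only need Proposition~\ref{Lem: Equivalence of Esakia order-compactification and upset} for image-compactness. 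The paper instead takes image-compactness from that proposition and then \emph{re-derives} that $X$ is continuously ordered, by writing an open $U\subseteq X$ as $V\cap X$ for $V$ open in $Y$ and using the upset hypothesis to show ${\downarrow}_{X}U={\downarrow}_{Y}V\cap X$, finally invoking Theorem~\ref{prop: cont ordered} to translate continuity of the order back into E-order-zero-dimensionality. Your route is shorter and exposes that this detour through continuity is logically unnecessary given how E-order-zero-dimensionality is defined; what the paper's longer argument buys is an explicit, self-contained verification of the order-theoretic content (how the upset condition forces ${\downarrow}$ of opens to be open), which is illustrative but not needed for the implication. Your bookkeeping remarks (Esakia order-compactifications are in particular Heyting ones, so the proposition applies) are exactly the right points of care, and both invocations are legitimate.
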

\begin{proof}
    %We now show the equivalence of (1)-(3):
(3)$\Rightarrow$(2) is obvious, and (1)$\Rightarrow$(3) follows from Theorem \ref{prop: cont ordered} and Proposition  \ref{Lem: Equivalence of Esakia order-compactification and upset}. We prove (2)$\Rightarrow$(1). 
%The fact 
That $X$ is image-compact follows from Proposition \ref{Lem: Equivalence of Esakia order-compactification and upset}. We show that $X$ is continuously ordered. Let $U\subseteq X$ be open. Then $U=V\cap X$ for some open $V \subseteq Y$. 
%Note that 
Since $X$ is an upset of $Y$, $${\downarrow}_{X}U = {\downarrow}_{X}(V\cap X) = {\downarrow}_{Y}V\cap X$$
(because $x \in X$, $x \leq_Y y$, and $y\in V$ imply that $y\in V\cap X$). 
%since $X$ is an upset). 
Since $Y$ is an Esakia space, ${\downarrow}_{Y}V$ is open. Therefore, ${\downarrow}_{X}U$ is open, and hence $X$ is continuously ordered. Thus, it is left to apply Theorem~\ref{prop: cont ordered}. 
\end{proof}

Let $\mathsf{OZD}$ be the category of order-zero-dimensional spaces and continuous order-preserving maps. As we pointed out at the beginning of this section, $\eta_0$ is left adjoint to the embedding $\mathsf{Pries}\hookrightarrow\mathsf{OZD}$. Thus, each $\mathsf{OZD}$-morphism $f:X\to Z$, where $X\in\mathsf{OZD}$ and $Z\in\mathsf{Pries}$, has a unique lift $\eta_{0}f:\eta_{0}X\to Z$. We give a convenient description of the lift,  generalizing \cite[Lem.~3.1]{Bezhanishvili2006}. 

%We now study when the assignment $X \mapsto \eta_0X$ is functorial. We first look at this in the context of order-zero-dimensional spaces. Indeed, the key condition that will ensure this is the fact that one can always lift continuous order-preserving maps $f:X\to Z$ where $Z$ is a Priestley space to continuous maps $\eta_{0}(f):\eta_{0}(X)\to Z$, and the lift is unique. The following results thus generalize \cite[Lem.~3.1]{Bezhanishvili2006} directly:

\begin{proposition}\label{prop: Explicit Construction of lifts}
    Let $X\in\mathsf{OZD}$,  $Z\in\mathsf{Pries}$, and $f:X\to Z$ be continuous and order-preserving.
    \begin{enumerate}
        \item \label{eq: Formula for the lift} The unique lift $\eta_{0}f:\eta_{0}X\to Z$ is given by $(\eta_{0}f)(x)=\bigcap\mathcal{F}_{x}^{f}\cap \bigcap \mathcal{I}_{x}^{f}$ for each $x\in\eta_0X$, where
        \begin{equation*}
    \mathcal{F}^{f}_{x}=\{{\uparrow}\overline{f[U]} : U\in \mathsf{ClopUp}(X), U\in x\} \text{ and } \mathcal{I}_{x}^{f}=\{{\downarrow}\overline{f[X\,\setminus\,V]} : V\in \mathsf{ClopUp}(X), V\notin x\}.
            \end{equation*}

    \item \label{eq: Properties of Lift} For each $x\in \eta_{0}X$ and $U,V\in\mathsf{ClopUp}(Z)$, $$x\in (\eta_{0}f)^{-1}[U\,\setminus\,V] \Longleftrightarrow f^{-1}[U]\in x \mbox{ and } f^{-1}[V]\notin x.$$
    \end{enumerate}
\end{proposition}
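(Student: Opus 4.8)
The plan is to base everything on a single consequence of Priestley duality, which I will call $(\star)$: the lift $\eta_0 f$ is the Priestley dual of the lattice homomorphism $f^{-1}\colon\mathsf{ClopUp}(Z)\to\mathsf{ClopUp}(X)$, and therefore satisfies
\[
(\eta_0 f)(x)\in W \iff f^{-1}[W]\in x \qquad (W\in\mathsf{ClopUp}(Z),\ x\in\eta_0 X).
\]
Indeed, writing $e\colon X\to\eta_0X$ for the unit of the adjunction and dualizing the identity $\eta_0 f\circ e=f$, one obtains $(\eta_0 f)^{-1}=\phi\circ f^{-1}$, where $\phi(A)=\{x:A\in x\}$; reading this off pointwise is exactly $(\star)$. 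I would record $(\star)$ first, since both parts of the proposition flow from it. In particular, Part~\ref{eq: Properties of Lift} is then immediate: $x\in(\eta_0 f)^{-1}[U\setminus V]$ means $(\eta_0 f)(x)\in U$ and $(\eta_0 f)(x)\notin V$, which by $(\star)$ says precisely that $f^{-1}[U]\in x$ and $f^{-1}[V]\notin x$.

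For Part~\ref{eq: Formula for the lift} I would set $z=(\eta_0 f)(x)$ and $K=\bigcap\mathcal{F}^f_x\cap\bigcap\mathcal{I}^f_x$, and prove that $K=\{z\}$ (so that the displayed equality holds under the usual identification of a singleton with its element). Since $Z$ is a Nachbin space, each $\overline{f[U]}$ is closed and hence ${\uparrow}\overline{f[U]}$ and ${\downarrow}\overline{f[X\setminus V]}$ are closed (Fact~\ref{fact: Basic facts about Priestley spaces}), so $K$ is closed. The argument splits into containment $z\in K$ and uniqueness $K\subseteq\{z\}$, and both invoke $(\star)$ together with the standard fact — a consequence of Priestley separation, or of Fact~\ref{fact: Basic facts about Priestley spaces}.\ref{eq: Closed upsets can be separated} applied with a singleton — that in a Priestley space every closed upset is the intersection of the clopen upsets containing it, and dually for closed downsets.

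To show $z\in K$, fix $U\in x$ and an arbitrary clopen upset $W\supseteq{\uparrow}\overline{f[U]}$; then $f[U]\subseteq W$, so $U\subseteq f^{-1}[W]$, whence $f^{-1}[W]\in x$ (as $x$ is a filter containing $U$), and $(\star)$ gives $z\in W$. Intersecting over all such $W$ yields $z\in{\uparrow}\overline{f[U]}$. Dually, for $V\notin x$ and any clopen upset $W$ with ${\downarrow}\overline{f[X\setminus V]}\subseteq Z\setminus W$, one gets $f^{-1}[W]\subseteq V$, so $f^{-1}[W]\notin x$, so $z\notin W$ by $(\star)$; intersecting gives $z\in{\downarrow}\overline{f[X\setminus V]}$. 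For uniqueness, any $z'\in K$ must satisfy $z'\in W\iff f^{-1}[W]\in x$ for every clopen upset $W$: if $f^{-1}[W]\in x$, then with $U=f^{-1}[W]$ we have $z'\in{\uparrow}\overline{f[U]}\subseteq{\uparrow}W=W$; and if $f^{-1}[W]\notin x$, then with $V=f^{-1}[W]$ we have $z'\in{\downarrow}\overline{f[X\setminus V]}\subseteq{\downarrow}(Z\setminus W)=Z\setminus W$. Hence $z'$ and $z$ belong to exactly the same clopen upsets, so $z'=z$ by Priestley separation, giving $K=\{z\}$.

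The main obstacle I anticipate is that $f[U]$ need not be a clopen upset — it need not even be an upset — so one cannot reason with $f[U]$ directly; the device of replacing it by the closed upset ${\uparrow}\overline{f[U]}$ (and dually by ${\downarrow}\overline{f[X\setminus V]}$) and then using that such closed up/downsets are intersections of clopen up/downsets is precisely what makes $(\star)$ applicable. The delicate bookkeeping is keeping the two families $\mathcal{F}^f_x$ and $\mathcal{I}^f_x$ coordinated, so that the upset family delivers the ``$\Leftarrow$'' direction and the downset family the ``$\Rightarrow$'' direction of $(\star)$ for an arbitrary point of $K$; everything else is routine manipulation of preimages and upsets.
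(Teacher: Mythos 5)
Your proof is correct, and it takes a genuinely different route from the paper's. Your key move is to treat the lift as already given (by the adjunction recalled just before the proposition) and to dualize $\eta_0 f\circ e=f$ into $(\eta_0 f)^{-1}=\phi\circ f^{-1}$, i.e.\ your $(\star)$; this rests on the standard duality fact that $\phi\colon\mathsf{ClopUp}(X)\to\mathsf{ClopUp}(\eta_0X)$ is an isomorphism inverse to $e^{-1}$, which the paper uses freely elsewhere, so there is no gap here. From $(\star)$, Part~\ref{eq: Properties of Lift} is immediate, and Part~\ref{eq: Formula for the lift} follows by pinning down the closed set $\bigcap\mathcal{F}^f_x\cap\bigcap\mathcal{I}^f_x$ as $\{(\eta_0f)(x)\}$, using that closed upsets (resp.\ downsets) of a Priestley space are intersections of the clopen upsets (resp.\ downsets) containing them. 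The paper argues in the reverse logical order, and constructively: it first shows the displayed intersection is a nonempty singleton --- nonemptiness by a finite-intersection-property argument in the compact space $Z$, where primality of the prime filter $x$ is what rules out $U_1\cap\dots\cap U_n\subseteq V_1\cup\dots\cup V_k$, and uniqueness by Priestley separation --- then verifies by hand that the resulting map is continuous (computing $(\eta_0f)^{-1}[W]$ as $\bigcup\{\phi(U) : {\uparrow}\overline{f[U]}\subseteq W\}$, again via compactness) and order-preserving, and only then deduces Part~\ref{eq: Properties of Lift} from Part~\ref{eq: Formula for the lift}. So where you derive Part~\ref{eq: Formula for the lift} from (a strengthening of) Part~\ref{eq: Properties of Lift}, the paper does the opposite. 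What each buys: your argument is shorter and more conceptual, since Priestley duality absorbs all the compactness and continuity work and makes transparent that $\eta_0f$ is just the Priestley dual of the lattice homomorphism $f^{-1}\colon\mathsf{ClopUp}(Z)\to\mathsf{ClopUp}(X)$; the paper's argument is more self-contained, effectively re-deriving the existence of the lift concretely in the spirit of the lemma of Bezhanishvili--Gehrke that it generalizes, without leaning on the adjunction beyond bare existence. A minor point in your favor: a fully constructive reading of the paper's proof still owes the (easy, implicit) check that the formula agrees with $f$ on $e[X]$, a step your approach never needs since you characterize the lift that is already given.
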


\begin{proof}
    (\ref{eq: Formula for the lift}) Let $x\in \eta_{0}X$. First we show that 
    %\begin{equation*}
    $\bigcap\mathcal{F}_{x}^{f}\cap  \bigcap\mathcal{I}_{x}^{f}$
    %=\{y\},
    %\end{equation*}
    %for some $y\in Z$. 
    is a singleton. 
    %The fact that this intersection is non-empty follows by compactness: 
    If the intersection was empty, we would have 
    \begin{equation*}
    {\uparrow}\overline{f[U_{1}]}\cap...\cap {\uparrow}\overline{f[U_{n}]}\cap {\downarrow}\overline{f[X\,\setminus\,V_{1}]}\cap...\cap {\downarrow}\overline{f[X\,\setminus\,V_{m}]}=\emptyset
    \end{equation*}
    for some $U_{1},...,U_{n},V_{1},...,V_{m}\in\mathsf{ClopUp}(X)$. 
    %clopen upsets:
    This implies that
    %, since closure and ${\downarrow}$ are inflationary, and direct images are monotone:
    \begin{equation*}
        f[U_{1}\cap...\cap U_{n}\cap (X\,\setminus\,V_{1})\cap...\cap (X\,\setminus\,V_{k})]=\emptyset,
    \end{equation*}
    so $U_{1}\cap...\cap U_{n}\cap (X\,\setminus\,V_{1})\cap...\cap (X\,\setminus\,V_{k})=\emptyset$, and hence $U_{1}\cap...\cap U_{n} \subseteq V_{1}\cup...\cup V_{k}$, a contradiction since $U_{1}\cap...\cap U_{n} \in x$ but $V_{1}\cup...\cup V_{k} \notin x$.
    %belongs to each of these sets. 
    
    For the uniqueness, if the intersection contained $y_{1}\not\leq y_{2}$, then 
    %without loss of generality we may assume that $y_{1}\not\le y_2$, so 
    there would exist $W\in\mathsf{ClopUp}(Z)$ such that $y_1\in W$ and $y_{2}\notin W$. 
    %a clopen upset;   
    If 
    %we assume that 
    $f^{-1}[W]\in x$, then 
    %by definition, 
    $y_{2}\in {\uparrow}\overline{ff^{-1}[W]}\subseteq W$; and if 
    %we assume that 
    $f^{-1}[W]\notin x$, then $y_{1}\in {\downarrow}\overline{ff^{-1}[Z\,\setminus\,W]}\subseteq Z\,\setminus\,W$. The obtained contradiction completes the proof of uniqueness. 
    %This shows that $\eta$

    Next we prove that $\eta_0f$ is continuous. Let $W\in \mathsf{ClopUp}(Z)$. We show that
    \begin{equation*}
        (\eta_{0}f)^{-1}[W]=\bigcup \{\phi(U) : {\uparrow}\overline{f[U]}\subseteq W\}.
    \end{equation*}
    %Indeed, 
    If ${\uparrow}\overline{f[U]}\subseteq W$ and $x\in \varphi(U)$, then $(\eta_{0}f)(x)\in W$ by construction. Conversely, 
    %note that 
    if $(\eta_{0}f)(x)\in W$, then
    %\begin{equation*}
        $\bigcap_{U\in x}{\uparrow}\overline{f[U]}\subseteq W$.
    %\end{equation*}
By compactness, there are 
%finitely many 
$U_{1},...,U_{n}$ such that $U_{i}\in x$ and ${\uparrow}\overline{f[U_{i}]}\subseteq W$. Thus, letting $U=U_{1}\cap...\cap U_{n}$, we obtain ${\uparrow}\overline{f[U]}\subseteq W$ and $U\in x$, so $x\in \varphi(U)$. A similar argument shows that if $W$ is a clopen downset of $Z$, then
\begin{equation*}
    (\eta_{0}f)^{-1}[W]=\bigcup\left\{ \eta_0 X\setminus\phi(V) : {\downarrow}\overline{f[X\setminus V]}\subseteq W\right\}.
\end{equation*}
Now apply Fact \ref{fact: Basic facts about Priestley spaces}.\ref{eqref: Clopen upsets and downsets form a subbasis} to conclude that 
%Since $Z$ is a Priestley space, the clopen upsets and clopen downsets form a subbasis (\ref{fact: Basic facts about Priestley spaces}.\ref{eqref: Clopen upsets and downsets form a subbasis}), yielding that
%this shows that 
$\eta_{0}f$ is continuous.

%Now finally 
It is left to show that $\eta_0f$ is order-preserving. Let $x,y\in \eta_{0}X$ with $x\leq y$ and let 
%Then assume that 
$W\in \mathsf{ClopUp}(Z)$. If $\eta_{0}f(x)\in W$, then $x\in (\eta_{0}f)^{-1}[W]$, which is a union of clopen upsets, and hence is an upset. Therefore, $y\in (\eta_{0}f)^{-1}[W]$, and so $(\eta_0f)(y) \in W$. Thus, by the Priestley separation axiom, 
%in $Z$, we obtain that 
$\eta_{0}f(x)\leq \eta_{0}f(y)$. 

    (\ref{eq: Properties of Lift})  First, suppose that $(\eta_{0}f)(x)\in U\,\setminus\,V$. If $f^{-1}[U]\notin x$, then $(\eta_{0}f)(x)\in {\downarrow}\overline{ff^{-1}[Z\,\setminus\,U]}$ by (\ref{eq: Formula for the lift}). But 
    %certainly 
    $ff^{-1}[Z\,\setminus\,U]\subseteq Z\,\setminus\,U$, and since the latter is a closed downset, we obtain that  ${{\downarrow}\overline{ff^{-1}[Z\,\setminus\,U]}\subseteq Z\,\setminus\,U}$, implying that $(\eta_0f)(x)\notin U$, a contradiction. Therefore, $f^{-1}[U]\in x$, and a similar argument yields that 
    %one argues that 
    $f^{-1}[V]\notin x$. 
    Conversely, suppose that  
    $f^{-1}[U]\in x$ and $f^{-1}[V]\notin x$. By (\ref{eq: Formula for the lift}), 
    %note that 
    ${(\eta_{0} f)(x)\in {\uparrow}\overline{ff^{-1}[U]}\subseteq U}$ and $(\eta_{0} f)(x)\in {\downarrow}\overline{ff^{-1}[Z\,\setminus\,V]}\subseteq Z\,\setminus\,V$, so $(\eta_{0} f)(x)\in U\,\setminus\,V$.
\end{proof}

%As an immediate consequence, we obtain (see, e.g., \cite[Rem.~3.10]{BezhanishviliMorandi2010}):
%thus obtain the following result:

%\begin{theorem}\label{cor: Adjunction for inclusions in Order-zero-dimensional and Priestley}
%    The functor $\eta_{0}:\mathsf{OZD}\to \mathsf{Pries}$ is left adjoint to the inclusion of $\mathsf{Pries}$ in $\mathsf{OZD}$.
%\end{theorem}
%\begin{proof}
%Let $X$ be an order-zero-dimensional space; by Proposition \ref{prop: Explicit Construction of lifts}, we know that any continuous order-preserving map $f:X\to Y$ to a Priestley space has a unique lift $\eta_{0}f:\eta_{0}X\to Y$. Thus $\eta_{0}:\mathsf{OZD}\to \mathsf{Pries}$ is left adjoint (see, e.g., \cite[p.~91]{MacLane1978}).
%\end{proof}

%\rodrigonote{I looked a bit, but am not sure who to attribute it to, because all the papers I see do not formulate this in a particularly categorical way.}

%\color{blue} I'll try to locate the result. It might be in one of Salbany's papers. But if we can't find the above formulation anywhere, we can say that it is folklore. \color{black}

We use Proposition \ref{prop: Explicit Construction of lifts} to prove that $\eta_0$ is left adjoint to the embedding $\mathsf{Esa}\hookrightarrow\mathsf{LocEsa}$.
%obtain 
%a similar result
%We want to obtain an analogous result 
%for locally Esakia spaces, 
For this, we will make use of the following  classic result:

%al lemma\footnote{Note that Esakia's lemma can be proven with fewer assumptions, see e.g. \cite[Lem.~2.17]{BBH15}.}, due to Esakia \cite{esakiatopologicalkripke} (for a proof see \cite[Lemma 3.3.12]{Esakiach2019HeyAlg}):

\begin{proposition} (Esakia's lemma, \cite{esakiatopologicalkripke}) \label{Prop: Esakias lemma}
    Let $X$ be an Esakia space and $\mathcal{F}$ a down-directed family (with respect to inclusion) of nonempty closed subsets of $X$. Then
    \begin{equation*}
        \big{\downarrow}
        %\Big(
        \bigcap\{F: F\in \mathcal{F}\}
        %\Big)
        = \bigcap \{{\downarrow}F : F\in \mathcal{F}\}.
    \end{equation*}
\end{proposition}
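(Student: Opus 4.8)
The plan is to prove the two inclusions separately, with the left-to-right containment being routine and the reverse relying on compactness.

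The easy inclusion ${\downarrow}\bigcap\mathcal{F}\subseteq\bigcap\{{\downarrow}F:F\in\mathcal{F}\}$ is purely order-theoretic and holds for any family in any poset: if $x\leq y$ for some $y\in\bigcap\mathcal{F}$, then $y\in F$ for every $F\in\mathcal{F}$, so $x\in{\downarrow}F$ for every such $F$, whence $x\in\bigcap\{{\downarrow}F:F\in\mathcal{F}\}$. No topology is used here.

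For the reverse inclusion, I would fix $x\in\bigcap\{{\downarrow}F:F\in\mathcal{F}\}$ and aim to produce a single $y\geq x$ lying in every $F$; equivalently, I want to show ${\uparrow}x\cap\bigcap\mathcal{F}\neq\emptyset$. The key device is to consider the family $\mathcal{G}=\{{\uparrow}x\cap F:F\in\mathcal{F}\}$. Each member is closed, because $F$ is closed by hypothesis and ${\uparrow}x$ is closed since $X$ is an Esakia space (Definition~\ref{def: continuously ordered}). Each member is nonempty, since $x\in{\downarrow}F$ says precisely that ${\uparrow}x\cap F\neq\emptyset$. The one point that invokes the hypotheses on $\mathcal{F}$ is the finite intersection property of $\mathcal{G}$: given $F_1,\dots,F_n\in\mathcal{F}$, down-directedness yields $F\in\mathcal{F}$ with $F\subseteq F_1\cap\dots\cap F_n$, and then $\emptyset\neq{\uparrow}x\cap F\subseteq\bigcap_{i=1}^{n}({\uparrow}x\cap F_i)$. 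Since $X$ is compact, $\bigcap\mathcal{G}={\uparrow}x\cap\bigcap\mathcal{F}\neq\emptyset$, so there is $y\in\bigcap\mathcal{F}$ with $x\leq y$, i.e. $x\in{\downarrow}\bigcap\mathcal{F}$, as desired.

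I expect no serious obstacle here: the only nontrivial ingredients are the defining Esakia property that each principal upset ${\uparrow}x$ is closed and the compactness of $X$, and the real content of the argument is merely recognizing that these two facts, together with down-directedness, force the relevant family of closed sets to have the finite intersection property. It is worth noting that the second continuity condition (openness of ${\downarrow}U$ for open $U$) is never used; closedness of principal upsets and compactness alone suffice.
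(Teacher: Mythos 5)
Your proof is correct, but there is nothing in the paper to compare it against: the paper states Proposition~\ref{Prop: Esakias lemma} as a classical result, citing Esakia's original work \cite{esakiatopologicalkripke}, and gives no proof, only a remark pointing to \cite[Lem.~2.17]{BBH15} for a more general version. Your argument --- the purely order-theoretic inclusion ${\downarrow}\bigcap\mathcal{F}\subseteq\bigcap\{{\downarrow}F : F\in\mathcal{F}\}$, followed by the compactness step applied to the family $\{{\uparrow}x\cap F : F\in\mathcal{F}\}$, whose finite intersection property comes from down-directedness --- is precisely the standard proof of this classical fact, so nothing is missing. Your closing observation is also accurate and is worth highlighting: the argument never uses the Esakia condition that ${\downarrow}U$ is open for $U$ open, only compactness and closedness of each principal upset ${\uparrow}x$, so the lemma as stated holds in every Priestley space (indeed in every compact ordered space with closed principal upsets). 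This is consistent with, and essentially a special case of, the greater generality alluded to in the paper's remark, where the lemma is established for point-closed relations on compact Hausdorff spaces; the Esakia hypothesis in the proposition reflects how the lemma is used later (in Proposition~\ref{Liftings are p-morphisms}) rather than what the proof requires.
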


\begin{remark}
Esakia's lemma can be proved in more generality; 
%with fewer assumptions, 
see 
%e.g. 
\cite[Lem.~2.17]{BBH15}.
\end{remark}
 
The next result generalizes \cite[Lem.~4.3]{Bezhanishvili2006}.

\begin{proposition}\label{Liftings are p-morphisms}
    Let $X$ be E-order-zero-dimensional. If $Z$ is an Esakia space and $f:X\to Z$ is a continuous p-morphism, then the unique lift $\eta_0 f:\eta_{0}X\to Z$ is a p-morphism.
\end{proposition}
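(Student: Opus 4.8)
The plan is to bypass the pointwise p-morphism condition and instead verify the dual, algebraic criterion supplied by Fact~\ref{fact: Condition for being a continuous p-morphism}.\ref{eqref: Cont p-morphisms}. Since $X$ is E-order-zero-dimensional, Theorem~\ref{prop: cont ordered} guarantees that $\eta_0 X$ is an Esakia order-compactification, hence an Esakia space; thus both $\eta_0 X$ and $Z$ are continuously ordered and $\mathsf{ClopUp}(\eta_0 X)$, $\mathsf{ClopUp}(Z)$ are Heyting algebras. As $\eta_0 f$ is continuous and order-preserving by Proposition~\ref{prop: Explicit Construction of lifts}, Fact~\ref{fact: Condition for being a continuous p-morphism}.\ref{eqref: Cont p-morphisms} reduces the entire statement to showing that $(\eta_0 f)^{-1}\colon \mathsf{ClopUp}(Z)\to\mathsf{ClopUp}(\eta_0 X)$ is a Heyting homomorphism.

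To this end I would factor $(\eta_0 f)^{-1}$ through $\mathsf{ClopUp}(X)$. Taking $V=\varnothing$ in Proposition~\ref{prop: Explicit Construction of lifts}.\ref{eq: Properties of Lift} gives, for each $U\in\mathsf{ClopUp}(Z)$, that $(\eta_0 f)^{-1}[U]=\{x\in\eta_0X : f^{-1}[U]\in x\}=\phi(f^{-1}[U])$, where $\phi\colon\mathsf{ClopUp}(X)\to\mathsf{ClopUp}(\eta_0 X)$ is the Priestley representation isomorphism $\phi(A)=\{x\in\eta_0 X : A\in x\}$. Hence $(\eta_0 f)^{-1}=\phi\circ f^{-1}$. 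Now $f^{-1}\colon\mathsf{ClopUp}(Z)\to\mathsf{ClopUp}(X)$ is a Heyting homomorphism because $f$ is a continuous p-morphism between continuously ordered spaces (Fact~\ref{fact: Condition for being a continuous p-morphism}.\ref{eqref: Cont p-morphisms}, using that $X$ is continuously ordered by Theorem~\ref{prop: cont ordered}). Moreover $\phi$ is a lattice isomorphism between two Heyting algebras, and any lattice isomorphism of Heyting algebras automatically preserves $\to$, since $a\to b=\max\{c : a\wedge c\le b\}$ is order-theoretically definable; so $\phi$ is a Heyting isomorphism. Being a composite of Heyting homomorphisms, $(\eta_0 f)^{-1}$ is a Heyting homomorphism, and the conclusion follows.

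The only load-bearing points are the identity $(\eta_0 f)^{-1}=\phi\circ f^{-1}$, which is exactly Proposition~\ref{prop: Explicit Construction of lifts}.\ref{eq: Properties of Lift}, and the observation that the representation isomorphism transports the Heyting structure. I expect the main obstacle to surface only if one insists on a direct, pointwise argument: given $x\in\eta_0 X$ with $(\eta_0 f)(x)\le z$, one must produce $x'\ge x$ with $(\eta_0 f)(x')=z$. I would build, via the prime filter theorem, a prime filter $x'\supseteq x$ of $\mathsf{ClopUp}(X)$ containing $\{f^{-1}[U] : z\in U\in\mathsf{ClopUp}(Z)\}$ and missing $\{f^{-1}[W] : z\notin W\in\mathsf{ClopUp}(Z)\}$; the crux is to check these two families are separated, which reduces (after intersecting finitely many $U$'s and joining finitely many $W$'s) to showing that an inclusion $a\cap f^{-1}[U]\subseteq f^{-1}[W]$ forces $f^{-1}[U\to W]\in x$, contradicting $(\eta_0 f)(x)\notin U\to W$. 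This last step is precisely where the preservation of implication by $f^{-1}$ (equivalently, Esakia's lemma, Proposition~\ref{Prop: Esakias lemma}) does the work, so both routes ultimately rest on the same fact.
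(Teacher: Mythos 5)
Your proof is correct and follows a genuinely different route from the paper's. The paper verifies the pointwise criterion of Remark~\ref{rem: Different presentations of p-morphisms}.\ref{eq: different p-morphisms}, showing $(\eta_{0}f)^{-1}[{\downarrow}y]={\downarrow}(\eta_{0}f)^{-1}[y]$ for each $y\in Z$: it writes $\{y\}$ as a down-directed intersection of basic clopens $U\,\setminus\,V$, applies Esakia's lemma (Proposition~\ref{Prop: Esakias lemma}) twice to commute ${\downarrow}$ and $(\eta_{0}f)^{-1}$ with this intersection, and handles each basic clopen via Proposition~\ref{prop: Explicit Construction of lifts}.\ref{eq: Properties of Lift}, Fact~\ref{fact: Condition for being a continuous p-morphism}.\ref{eqref: Cont p-morphisms}, and Claim~\ref{Claim: Identity of elements in Esakia basis}. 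You instead apply the dual criterion of Fact~\ref{fact: Condition for being a continuous p-morphism}.\ref{eqref: Cont p-morphisms} directly to $\eta_{0}f$ (legitimate, since $\eta_{0}X$ and $Z$ are both Esakia spaces by Theorem~\ref{prop: cont ordered}), reducing everything to the factorization $(\eta_{0}f)^{-1}=\phi\circ f^{-1}$, which is indeed Proposition~\ref{prop: Explicit Construction of lifts}.\ref{eq: Properties of Lift} with $V=\varnothing$ (using that prime filters are proper, so $\varnothing\notin x$). This buys a shorter, purely algebraic argument that avoids Esakia's lemma altogether, making transparent that the proposition is ``dualize, compose, dualize back''; the paper's computation stays in topological terms and yields the explicit down-set identity along the way. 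Two points confirming the soundness of your version: your observation that the representation isomorphism $\phi$ transports $\rightarrow$ because relative pseudocomplements are order-definable is exactly the content of Claim~\ref{Claim: Identity of elements in Esakia basis} (which the paper instead proves by a finite-intersection-property argument), and the two directions of Fact~\ref{fact: Condition for being a continuous p-morphism}.\ref{eqref: Cont p-morphisms} are each invoked where they genuinely hold: the direction ``p-morphism implies Heyting homomorphism'' for $f:X\to Z$ needs no compactness, while the converse direction, which does rely on compactness, is applied only to $\eta_{0}f$ between the compact spaces $\eta_{0}X$ and $Z$. The one loose remark is your closing parenthetical equating preservation of implication by $f^{-1}$ with Esakia's lemma---these are related but not equivalent---but this occurs only in your optional pointwise fallback sketch and does not affect the main argument, which is complete as it stands.
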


\begin{proof}
    %To show that $\eta_{0}(p)$ is a p-morphism, we need 
    By Remark \ref{rem: Different presentations of p-morphisms}.\ref{eq: different p-morphisms}, it suffices to show that
    \begin{equation*}
        (\eta_{0} f)^{-1}[{\downarrow}y]={\downarrow}(\eta_{0}f)^{-1}[y]
    \end{equation*}
    for each $y\in Z$. 
    %Now note that 
    Since each singleton of $Z$ is the intersection of the clopen sets containing it, by Fact~\ref{fact: Basic facts about Priestley spaces}.\ref{eqref: Clopen upsets and downsets form a subbasis} and Esakia's lemma, 
    %(Proposition \ref{Prop: Esakias lemma}) 
    %and Priestley duality, \color{red} noting that each singleton is the intersection of the clopens in which it is contained, and any such clopen $W$ is a union of clopens of the form $U\,\setminus\,V$ where both $U,V$ are clopen upsets (see Fact \ref{fact: Basic facts about Priestley spaces}.\ref{eqref: Clopen upsets and downsets form a subbasis}),
    \begin{equation*}
        {\downarrow}(\eta_{0}f)^{-1}[y]=\bigcap\{{\downarrow}(\eta_{0} f)^{-1}[U\,\setminus\,V] : y\in U \text{ and } y\notin V \},
    \end{equation*}
    where $U,V$ range over $\mathsf{ClopUp}(Z)$. 
    We show that $${\downarrow}(\eta_0 f)^{-1}[U\,\setminus\,V] = (\eta_0 f)^{-1}[{\downarrow}(U\,\setminus\,V)].$$ Since $\eta_{0}f$ is order-preserving, 
    ${\downarrow}(\eta_0 f)^{-1}[U\,\setminus\,V]\subseteq (\eta_0 f)^{-1}[{\downarrow}(U\,\setminus\,V)]$. 
    For the reverse inclusion, if ${x\in (\eta_{0}f)^{-1}[{\downarrow}(U\,\setminus\,V)]}$, then $x\in %(\eta_{0}f)^{-1} {\downarrow}(U\,\setminus\,V) = 
    (\eta_{0}f)^{-1}[Z\,\setminus\,(U\rightarrow_{\mathsf{ClopUp}(Z)} V)]$ (see Fact \ref{fact: Condition for being a continuous p-morphism}.\ref{eqref: formula for implication}). Therefore, by Proposition~\ref{prop: Explicit Construction of lifts}.\ref{eq: Properties of Lift}, $f^{-1}[U\rightarrow_{\mathsf{ClopUp}(Z)} V]\notin x$.
    %, since $U\rightarrow_{\mathsf{ClopUp}(Z)}V$ is a clopen upset.  
    Since $f$ is a continuous p-moprhism, $f^{-1}$ is a Heyting 
    %algebra 
    homomorphism (see Fact \ref{fact: Condition for being a continuous p-morphism}.\ref{eqref: Cont p-morphisms}). Thus, ${f^{-1}[U]\rightarrow_{\mathsf{ClopUp}(X)} f^{-1}[V]\notin x}$, and hence $$x\notin \varphi(f^{-1}[U]\rightarrow_{\mathsf{ClopUp}(X)} f^{-1}[V])=\varphi(f^{-1}[U])\rightarrow_{\mathsf{ClopUp}(\eta_0 X)}\varphi(f^{-1}[V]),$$
    where the equality holds by Claim \ref{Claim: Identity of elements in Esakia basis} since $\mathsf{ClopUp}(X)$ is an Esakia basis.
    Therefore, there is $y\geq x$ such that $y\in \varphi(f^{-1}[U])$ and $y\notin \varphi(f^{-1}[V])$. Thus,   
    there is $y\geq x$ such that $f^{-1}[U]\in y$ and $f^{-1}[V]\notin y$. Using Proposition \ref{prop: Explicit Construction of lifts}.\ref{eq: Properties of Lift} again, $y\in (\eta_{0}f)^{-1}[U\,\setminus\,V]$, and so $x\in {\downarrow}(\eta_{0}f)^{-1}[U\,\setminus\,V]$. 
    
    Consequently, using Esakia's lemma again, 
    \begin{eqnarray*}
    {\downarrow}(\eta_{0}f)^{-1}[y] &=& \bigcap \left\{ (\eta_{0} f)^{-1}[{\downarrow}(U\,\setminus\,V)] : y\in U \text{ and } y\notin V \right\} \\ &=& (\eta_{0} f)^{-1} \bigcap \left\{ {\downarrow}(U\,\setminus\,V) : y\in U \text{ and } y\notin V \right\} \\ &=& (\eta_{0} f)^{-1} {\big\downarrow} \bigcap \left\{ U\,\setminus\,V : y\in U \text{ and } y\notin V \right\} \\ &=& 
    (\eta_{0}f)^{-1}[{\downarrow}y],
    \end{eqnarray*}
    %(see Remark \ref{rem: Condition for being a continuous p-morphism}), 
    and hence $\eta_{0}f$ is a 
    %continuous 
    p-morphism.
\end{proof}
 
% By Theorem \ref{prop: cont ordered}, if $X$ is 
% %for continuously ordered 
% E-order-zero-dimensional, then $\eta_0X$ is the top element of the poset $(\mathcal{K}_{H}(X),\preceq_H)$. If in addition $X$ is locally Esakia, then $\eta_0X$ is the top element of the poset $(\mathcal{K}_{E}(X),\preceq_E)$: 

% \begin{theorem}
%     If $X$ is a locally Esakia space, then $\eta_{0}X$ is the $\preceq_{E}$-largest Esakia order-compacti\-fication of $X$.
% \end{theorem}
% \begin{proof}
%     Let $Y$ be an Esakia order-compactification of $X$. By Proposition \ref{Lem: Equivalence of Esakia order-compactification and upset}, $X$ is an upset of $Y$. Therefore, since $X$ is E-order-zero-dimensional, by Proposition \ref{Liftings are p-morphisms} there is a p-moprhism from $\eta_0 X$ onto $Y$ that is identity on $X$. Thus, $Y\preceq_{E}\eta_{0}X$.
%     %
%     %By Theorem \ref{Special facts about Esakia spaces}, 
%     %we know that 
%     %$X$ is a locally Esakia space 
%     %image-compact and continuously ordered 
%     %iff $\eta_{0}X$ is an Esakia order-compactifica\-tion and $\varepsilon:X\to \eta_{0}X$ is a p-morphism, and every Esakia order-compactification $Y$ of $X$ has $X$ sitting inside $Y$ as an upset. 
%     %Hence, by Proposition \ref{Liftings are p-morphisms}, 
%     %we know that 
%     %whenever $Y$ is an Esakia order-compactification of $X$, $Y\preceq_{E}\eta_{0}X$. 
% \end{proof}

The following allows us to conclude that $\mathsf{Esa}$ is a reflective subcategory of $\mathsf{LocEsa}$. 

\begin{theorem}\label{Left adjoint functor to preEsakia}
    %The functor 
    $\eta_{0}:\mathsf{LocEsa}\to \mathsf{Esa}$ is left adjoint to the inclusion functor ${\mathsf{Esa}\hookrightarrow\mathsf{LocEsa}}$.
\end{theorem}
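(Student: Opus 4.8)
The plan is to establish the adjunction by exhibiting, for each locally Esakia space $X$, a universal arrow from $X$ to the inclusion functor, namely the order-embedding $e_X : X \hookrightarrow \eta_0 X$. The first thing I would verify is that $\eta_0 X$ genuinely lands in $\mathsf{Esa}$ and that $e_X$ is a $\mathsf{LocEsa}$-morphism: by Theorem~\ref{Special facts about Esakia spaces}, since $X$ is locally Esakia, $\eta_0 X$ is an Esakia order-compactification in which $X$ is an upset. Being an upset is precisely what makes $e_X$ a p-morphism (by Remark~\ref{rem: Different presentations of p-morphisms}.\ref{eq: different p-morphisms}, since ${\uparrow}_{\eta_0 X} e_X(x) = e_X[{\uparrow}_X x]$ when $X$ is an upset), so $e_X$ is a continuous p-morphism, as required.

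The core of the argument is the universal property: for every Esakia space $Z$ and every continuous p-morphism $f : X \to Z$, there is a unique continuous p-morphism $g : \eta_0 X \to Z$ with $g \circ e_X = f$. Existence and uniqueness of $g$ as a continuous order-preserving map come for free from the adjunction $\eta_0 \dashv (\mathsf{Pries}\hookrightarrow\mathsf{OZD})$ recalled at the start of the section (every Esakia space is a Priestley space), with $g = \eta_0 f$ described explicitly in Proposition~\ref{prop: Explicit Construction of lifts}. The one genuinely new thing to check is that this lift $\eta_0 f$ is not merely order-preserving but a \emph{p-morphism}; this is exactly the content of Proposition~\ref{Liftings are p-morphisms}, which applies since $X$ is E-order-zero-dimensional and $Z$ is an Esakia space. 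Uniqueness among p-morphisms is immediate from uniqueness among continuous order-preserving maps, since p-morphisms form a subclass.

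Having produced a universal arrow $e_X$ for each object, I would then assemble these into the statement that $\eta_0$ is left adjoint to the inclusion. The cleanest route is to invoke the standard characterization of adjunctions via universal arrows (see, e.g., \cite{MacLane1978}): a functor $R : \mathsf{Esa}\hookrightarrow\mathsf{LocEsa}$ has a left adjoint precisely when every object $X$ of $\mathsf{LocEsa}$ admits a universal arrow to $R$, and the assignment $X \mapsto \eta_0 X$ together with the action on morphisms then automatically extends to the left adjoint functor. The action on morphisms is forced: given a $\mathsf{LocEsa}$-morphism $h : X \to X'$, the arrow $\eta_0 h$ is the unique lift of $e_{X'} \circ h$, which is a continuous p-morphism into the Esakia space $\eta_0 X'$, so Proposition~\ref{Liftings are p-morphisms} again guarantees $\eta_0 h$ is a p-morphism and hence a genuine $\mathsf{Esa}$-morphism.

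The main obstacle is entirely concentrated in the p-morphism lifting step, i.e.\ Proposition~\ref{Liftings are p-morphisms}, since everything else reduces to the already-established Priestley-level adjunction. In writing the proof I would therefore spend most of the exposition making the universal-arrow bookkeeping precise---confirming that $e_X$ is an $\mathsf{Esa}$-codomain arrow, that naturality of the unit holds, and that the functoriality of $\eta_0$ on $\mathsf{LocEsa}$-morphisms is well defined---while citing Proposition~\ref{Liftings are p-morphisms} and Theorem~\ref{Special facts about Esakia spaces} for the two substantive inputs. A minor point worth stating explicitly is that $\eta_0$ restricted to $\mathsf{LocEsa}$ really is a functor into $\mathsf{Esa}$ rather than merely into $\mathsf{Pries}$, which follows because local Esakianess of the domain forces the codomain $\eta_0 X$ to be an Esakia space.
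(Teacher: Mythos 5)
Your proposal is correct and follows essentially the same route as the paper's proof: both rest on the fact that image-compactness makes the embedding $X\hookrightarrow\eta_0X$ a continuous p-morphism (via Theorem~\ref{Special facts about Esakia spaces}), on Proposition~\ref{Liftings are p-morphisms} for lifting p-morphisms into Esakia spaces, and on the universal-arrow characterization of adjunctions from \cite{MacLane1978}. The only difference is expository: the paper compresses the bookkeeping (functoriality, naturality, uniqueness among p-morphisms) into the citation of MacLane, whereas you spell it out.
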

\begin{proof}
Let $X$ be a locally Esakia space. Since $X$ is image-compact, the embedding $X\hookrightarrow\eta_0X$ is a continuous p-morphism. Also, since $X$ is E-order-zero-dimensional, a continuous p-morphism $f:X\to Y$ to an Esakia space $Y$ has a unique lift $\eta_0 f : \eta_0X \to Y$ in $\mathsf{Esa}$. Thus, $\eta_{0}:\mathsf{LocEsa}\to \mathsf{Esa}$ is left adjoint to the inclusion functor $\mathsf{Esa}\hookrightarrow\mathsf{LocEsa}$ (see, e.g., \cite[p.~91]{MacLane1978}).
\end{proof}

\printbibliography[
    heading=bibintoc,
    title={Bibliography}
]

\vspace{5mm}
%\newpage

\noindent \textbf{Rodrigo Nicolau Almeida}\\
Institute for Logic, Language and Computation\\ University of Amsterdam\\
Amsterdam, 1098XH\\
The Netherlands\\
r.dacruzsilvapinadealmeida@uva.nl

\vspace{3mm}

\noindent \textbf{Guram Bezhanishvili}\\ Department of Mathematical Sciences\\
New Mexico State University\\
Las Cruces, NM 88003\\
USA\\
guram@nmsu.edu

\vspace{3mm}

\noindent \textbf{Nick Bezhanishvili}\\
Institute for Logic, Language and Computation\\ University of Amsterdam\\
Amsterdam, 1098XH\\
The Netherlands\\
n.bezhanishvili@uva.nl

%\rodrigonote{I could not get the affiliation package to work, unfortunately.}

\end{document}